\newcommand{\vep}{\varepsilon}
\newcommand{\RV}{\mathcal{RV}}
\newtheorem{theorem}{Theorem}[section]
\newtheorem{definition}[theorem]{Definition}
\newtheorem{lemma}[theorem]{Lemma}
\newtheorem{proposition}[theorem]{Proposition}
\newtheorem{remark}{Remark}
\DeclareMathOperator{\id}{id}
\newcommand{\Z}{\mathbb{Z}} 
\newcommand{\N}{\mathbb{N}}
\newcommand{\Q}{\mathbb{Q}}
\newcommand{\R}{\mathbb{R}}
\newcommand{\la}{\lambda}
\newcommand{\A}{\mathcal{A}}
\newcommand{\Function}[5]{\begin{array}{cccc} #1 :& #2 & \rightarrow & #3 \\ & #4 & \mapsto & #5 \end{array}}
\date{}
\begin{document}
\title{Ergodicity of explicit logarithmic cocycles over IETs}
\author{P. Berk, F. Trujillo, C. Ulcigrai}
\maketitle

\abstract{We prove ergodicity in a class of skew-product extensions of interval exchange transformations given by cocycles with logarithmic singularities. This, in particular, provides explicit examples of ergodic $\mathbb{R}$-extensions of minimal locally Hamiltonian flows with non-degenerate saddles in genus two. More generally, given any symmetric irreducible permutation, we show that for almost every choice of lengths, the skew-product built over the associated IET using a cocycle with symmetric logarithmic singularities that is \emph{odd} when restricted to each of the exchanged intervals is ergodic.}

\section{Introduction} 
This paper gives a contribution to the study of \emph{skew-products} over interval exchange transformations, an infinite measure preserving setting in which fundamental questions such as ergodicity are still wide open. 

Let $I:=[0,1]$ denote the unit interval and $T: I\to I$ be an \emph{interval exchange transformation} (or IET for short), i.e., a piecewise isometry of the interval with finitely many discontinuities (see \S~\ref{sec:IETs} for the formal definition). 

Given a real-valued measurable function $\varphi : I \to \mathbb{R}$, the $\mathbb{R}$-extension of $T$ given by $\varphi $ is the skew-product automorphism $T_\varphi $ of the strip $I\times \R$ given by
\begin{equation}\label{IETskew}
T_\varphi (x,y)= (T(x), y+ \varphi (x)), \qquad x\in I, \quad y\in \R.
\end{equation}
One can then see that $T_\varphi$ preserves a natural infinite invariant measure, namely, the restriction $\lambda$ to the strip $I\times \R$ of the $2$-dimensional Lebesgue measure. Iterates of $T_\varphi $ have the form 
$$T^n_\varphi (x,y)= (T^n(x), y+ S_n \varphi (x)),$$
where $S_n \varphi $ denotes the \emph{Birkhoff sums} of $\varphi $ over $T$, which are given by 
\begin{equation}\label{def:BS}
S_n \varphi := 
\begin{cases} \sum_{j=0}^{n-1}\varphi \circ T^j , & \text{if}\ n>0,\\
0 , & \text{if}\ n=0,\\
\sum_{j=n}^{-1}\varphi \circ T^{-j} , & \text{if}\ n<0.\\
\end{cases}
\end{equation}
%%% Add ref (see Definition~\ref{}) %%%% I added it here since we will speak of coboundary and bounded BS below.
 Since Birkhoff sums define a $\mathbb{R}$-\emph{cocycle} over $T$ (i.e., $S_{n+m}\varphi =S_n \varphi +S_m \varphi \circ T^n$ for all $n,m\in\mathbb{Z}$) 
the function $\varphi $ is also called a \emph{cocycle}. It is well known that $T_\varphi $ is recurrent if and only if $\int_I \varphi (x) dx=0$ {(see~\cite{atkinson}).} { Recall that $T_\varphi$ is called \emph{recurrent} if for every $A \subseteq I$ of positive Lebesgue measure and any $\epsilon > 0$ there exists $n \in \Z$ such that $T_\varphi^n(A \times \{0\}) \cap A \times (-\epsilon, \epsilon) \neq \emptyset$.} 

Skew-products of the form \eqref{IETskew} where $T: I \to I$ is a \emph{rotation} (i.e., the map $T(x)=x+\alpha \mod 1$) are one of the most studied examples in infinite ergodic theory (see for example the monographs \cite{aaronson_introduction_1997, schmidt_cocycles_1977} { and the references therein}). In particular, when the cocycle $\varphi $ is of the form $\varphi =\chi_{[0, \beta]}-\beta$ {(where $\chi_{[a, b]}$ denotes the characteristic function of the interval $[a,b]$ and $0 < \beta < 1$)} there is a long history of results on it, starting from ergodicity (see for example \cite{schmidt_cylinder_1978,conze_ergodicite_1976,aaronson_visitors_1982,oren_ergodicity_1983,avila_visits_2015,aaronson2016discrepancy}). Interval exchange transformations are a natural generalization of rotations. However, the problem of ergodicity of skew-product extensions over IETs is still {widely} open despite being actively researched; see, for example, \cite{Hu-We, Co-Fr, Fr-Ul_Ehr, Fr-Hu, Ho, Ra-Tr1, Ra-Tr2, Ch-R} for some results in particular settings. In particular, some specific results for skew-products over IETs with $d>2$, when the cocycle $\varphi $ is piecewise constant or piecewise absolutely continuous, were proved, for example, in \cite{Co-Fr, Fr-Ul_Ehr, marmi_cohomological_2005}.

\subsection{Ergodicity of cocycles with logarithmic singularities}\label{sec:intro}

In this paper, we consider the special case when the cocycle $\varphi: I\to$ {$\mathbb{R}\cup \{+ \infty, -\infty\}$} has \emph{logarithmic singularities}, {namely, if $\{ u^t_i, 1\leq i\leq d-1\} $ denote the discontinuities $T$,} it blows up at a (nonempty subset) of { $\{u^t_i, 1\leq i\leq d-1\} \cup \{u^t_0:=0, u^t_d:=1\}$ as $C_{i}^\pm |\log (x-u^t_i)|$ when $x\to (u^t_i)^{\pm}$}, { for some constants $C_i^\pm \in \R$ (not all equal to 0)} (see Definition~\ref{def:logsing}). 
One of the reasons for the interest in this type of cocycle is that skew-products over IETs under such cocycles arise naturally when taking Poincaré sections of extensions of locally Hamiltonian flows on surfaces of genus $g \geq 1$ (see \S~\ref{sec:reduction} and also the introduction of \cite{Fr-Ul_BS} and the references therein), a class of flows whose study goes back to the work of Poincaré \cite{poincare_memoire_1881} (see also the work of Krygin \cite{Kr}). 

When $d=2$, i.e., $T=R_\alpha$ is a rotation, ergodicity of skew-products as in \eqref{IETskew} for cocycles with logarithmic singularities was previously studied for cocycles with one {\emph{symmetric}} logarithmic singularity {(see Definition \ref{def:symlog})}, such as %%%{ check formula!}
{$$\varphi (x)=C\log(x)+ C\log (1-x) + 2C,$$ }
%$$\varphi (x)=C\log(x)+ C\log (1-x) - 2C,$$ 
 by Frączek and Lema{\' n}czyk in \cite{Fr-Le}, {and for cocycles} with one \emph{asymmetric} logarithmic singularity {(see Definition \ref{def:symlog})}, {such as}%i.e., it has the form 
\begin{equation}\label{1asymm} 
{\varphi (x)=C \log (1-x) + C,}
%\varphi (x)=C \log (1-x) + c,
\end{equation} 
by Fayad and Lema\'nczyk in \cite{FL:ont} {(notice that the constants $2C$ and $C$ are added so that the cocycles have zero mean)}. In both cases, it was proved that $T_\varphi $ is ergodic for almost every rotation number. %%%{Add pictures!!}

Cocycles with logarithmic singularities over IETs were recently investigated in \cite{fraczek_ergodic_2012, Fr-Ul_BS}. This paper considers a special class of cocycles with logarithmic singularities over IETs in which one can exhibit explicit examples of ergodic extensions. We comment below on the relation between this result and the existing ones in the literature. More precisely, we consider cocycles with non-trivial logarithmic singularities (see Definition \ref{def:logsing}), which are \emph{odd} when restricted to each of the intervals $I_i:=(a_i,b_i)$ exchanged by the IET, i.e., for $i=1,\dots, d$, the cocycle $\varphi $ satisfies 
%\begin{equation}\label{eq:intervalI}
$$\varphi ( a_{i}+ b_{i} - x)=-{\varphi (x)}, \qquad \text{for\, all}\ x\in (a_i,b_{i}).$$ 
%\end{equation}
See Definition \ref{def:odd} and \S~\ref{sec:logsing} for more details. Furthermore, we consider IETs whose permutations reverse the order of the intervals, so-called \emph{symmetric} permutations (see \S~\ref{sec:IETs}). 
%See also \S~\ref{} for an equivalent characterization of these cocycles and \S~\ref{sec:reduction} for an explanation of the name.
%Recall that a symmetric permutation is a permutation of the form $(d d-1 \dots 2 1)$. 

\begin{theorem}[Ergodicity of odd cocycles with logarithmic singularities]
\label{thm:reductiontoIETs}
For any $d\geq 3$, for almost every IET $T:I \to I$ with a symmetric permutation on $d$ symbols, and for any cocycle $\varphi$ {of class $C^2$} which:
\begin{enumerate}
\item has non-trivial symmetric logarithmic singularities (see \S~\ref{sec:logsing}) at the discontinuities of $T$ (in the sense of Definition~\ref{def:logsing});
\item is \emph{odd} on each continuity interval of $T$ (in the sense of Definition~\ref{def:odd}), 
\end{enumerate} 
the associated skew-product $T_\varphi $ is ergodic { with respect to the Lebesgue measure on $I \times \R$}. 
\end{theorem}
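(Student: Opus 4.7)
The plan is to establish ergodicity via Schmidt's theory of cocycles: the extension $T_\varphi$ is ergodic if and only if the group $E(\varphi) \subset \mathbb{R}$ of essential values of $\varphi$ coincides with $\mathbb{R}$. The oddness relation, combined with the fact that $T$ is an isometry on each $I_i$ and that $\{T(I_i)\}$ is a permutation of $\{I_i\}$, forces $\int_I \varphi\, dx = 0$, so $T_\varphi$ is recurrent and $E(\varphi)$ is a closed subgroup of $\mathbb{R}$; it therefore suffices to produce a sequence of nonzero essential values accumulating at $0$.

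The natural tool for producing such essential values is Rauzy--Veech renormalization. For almost every $T$ with a given symmetric irreducible permutation, the Kontsevich--Zorich / Rauzy--Veech algorithm, together with Oseledets regularity and Forni-type uniform hyperbolicity estimates, provides a sequence of Rokhlin towers with dynamically adapted heights $q_n$, along the levels of which one can precisely estimate the Birkhoff sums $S_{q_n}\varphi$. The cocycle $\varphi$ decomposes into a piecewise absolutely continuous part, controlled by Denjoy--Koksma, and a sum of logarithmic singularities whose contribution on a typical tower level is essentially $\sum_i C_i^{\pm} \log \mathrm{dist}(y, \beta_i)$ over the closest-approach points of the orbit to the discontinuities $\beta_i$; without cancellation these terms diverge at rate $\log q_n$.

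The crux of the proof is to exploit the combination of the symmetric permutation with the oddness of $\varphi$ to force a pairwise cancellation of these divergent contributions. A symmetric permutation makes $T$ equivariant, in a renormalization sense, under the interval involution $\iota(x) = 1 - x$, and the identity $\varphi(b_i + a_i - x) = -\varphi(T(x))$ is the exact compatibility between $\varphi$ and the fiberwise midpoint reflections: reflecting an orbit segment about the midpoint of a continuity interval reverses the sign of $\varphi$ along its image. By pairing tower levels via the induced symmetry at each Rauzy--Veech stage, the logarithmic divergences should cancel in pairs, leaving a bounded but non-degenerate remainder on a set of proportional measure; this produces tightness of $(S_{q_n}\varphi)$ on a set of positive measure without collapsing the limit to a point mass at $0$.

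Once tightness with a non-trivial limiting behaviour is available, a standard essential-values argument concludes: any weak subsequential limit of the distributions of $S_{q_n}\varphi$ on appropriate tower bases is symmetric, non-atomic, and has support contained in $E(\varphi)$, so $E(\varphi) = \mathbb{R}$. The main obstacle is precisely the cancellation step: turning the exact $\mathbb{Z}/2$ symmetry at the level of $\varphi$ into an effective pairwise cancellation of the logarithmic terms along genuinely non-symmetric orbits of $T$. This requires a careful choice of renormalization scale (e.g.\ a balanced acceleration of Rauzy--Veech), an explicit pairing of tower levels compatible with the symmetric permutation on symbols, and sharp quantitative control on orbit approach rates to the singularities $\beta_i$, likely via the tame Kontsevich--Zorich distortion estimates valid for almost every $T$.
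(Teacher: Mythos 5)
Your high-level roadmap (essential values, Rauzy--Veech towers, tightness plus a non-degenerate limit, cancellation enforced by the symmetric permutation and oddness) is the same shape as the paper's argument, and the paper's ergodicity criterion (tightness of $S_{h_n}\varphi$ on rigidity sets plus decay of the Fourier coefficients $\int_{\Xi_n} e^{2\pi i k S_{h_n}\varphi}$) is indeed a concrete implementation of the essential-values philosophy. However, two of the main ideas in the paper's proof are missing from your sketch, and their absence is not a matter of detail: they are precisely what make the "cancellation step" you flag as the main obstacle actually go through.

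First, what the oddness of $\varphi$ (together with the symmetric permutation) buys is \emph{not} the pairwise cancellation of divergent logarithmic contributions along an orbit. That cancellation of log terms in $S_r\varphi'$ is a consequence of the \emph{symmetric} singularity condition $\sum C^+_\alpha = \sum C^-_\alpha$ alone, and it is already proved in the prior work on absence of mixing; the present paper simply cites those cancellation estimates (its "good cancellation times"). What oddness gives, and what cannot be obtained from symmetry of the singularities, is a collection of \emph{anchor points at which the value of the Birkhoff sum is known}. Concretely, combining $\varphi\circ T^{-1} = -\varphi\circ\mathcal{I}_I$ with the fact that the centers of the exchanged intervals and the midpoint $1/2$ are fixed (or swapped in a controlled way) by the hyperelliptic involution, one gets $S_{2n}\varphi(T^{-n}x_0)=0$ at the midpoint and $S_{2n}\varphi(T^{-n}c_\alpha) = -\varphi(T^n c_\alpha)$ at each center. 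A further geometric lemma, run through the zippered-rectangle picture, shows that at each renormalization step these fixed points of the involution project to a controlled collection of levels of the current Rohlin towers. Without these anchor points, the bound on the \emph{derivative} of $S_{h_n}\varphi$ coming from cancellations only controls oscillation, not location, and tightness of $S_{h_n}\varphi$ does not follow. This is a genuine missing ingredient.

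Second, your concluding step ("any weak limit is non-atomic, so $E(\varphi) = \mathbb{R}$") is where the \emph{non-trivial} singularity hypothesis must enter, and your proposal gives no mechanism for it. The paper achieves non-degeneracy by deliberately letting a positive proportion of the tower base approach one specific discontinuity (chosen so that the corresponding constant $C^+$ is nonzero, or so that $C^+_{\pi_0^{-1}(1)} + C^+_{\pi_0^{-1}(d)} \neq 0$) at a scale $\eta$ tuned against the cancellation constant $M$; this forces $|S_{2h_n}\varphi'|$ to be bounded below, and bounded above, on that sub-interval. Only then can one apply an integration-by-parts / van der Corput type estimate on each floor to beat the trivial bound in the Fourier coefficient and obtain the required oscillation. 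As you yourself note, a coboundary is tight; so without this quantitative lower bound on $|S_{2h_n}\varphi'|$ -- which is where the good $\delta$-drift construction, the well-positioned discontinuities, and the choice of $\eta$ all conspire -- your argument would not distinguish an ergodic cocycle from a trivial one. In short, your sketch correctly identifies the symmetry as the crucial structure but assigns it the wrong job, and it lacks the lower-bound mechanism that converts non-trivial singularities into non-degenerate oscillations.
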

{ We remark that the assumption $d \ge 3$ is necessary, since for $d=2$, for any rotation $R_{\alpha}$ by $\alpha\in\R\setminus\Q$, 
%among the class cocycles satisfying the properties listed in the statement of Theorem %\ref{thm:reductiontoIETs}, Indeed, 
 a cocycle $\varphi : I\to\R$ satisfying the assumptions of the theorem} is necessarily of the form 
\[ 
{ \varphi (x)=C\big(\log x +\log(1-x)- \log(x + \alpha\mod 1) - \log(1- \alpha - x \mod 1)\big)},
%\varphi (x)=C\big((\log x +\log(1-x))-(\log(x-\alpha\mod 1)+\log(1-x+\alpha\mod 1))\big),
\]
%{ and one can check that this cocycle is}
{which is} a \emph{coboundary} with respect to $R_\alpha$, i.e., it can be 
expressed in the form $\psi - \psi \circ R_\alpha$ for some measurable function 
$\psi: I \to \R$ (in this case, $\psi = C(\log x +\log(1-x))$, and thus the 
associated skew-product is not ergodic (see \cite[Chapter 
8]{aaronson_introduction_1997}).

For cocycles with logarithmic singularities over IETs, the only results about ergodicity were proved by Frączek and the last author (see \cite{Fr-Ul_BS}, which generalizes previous work in \cite{fraczek_ergodic_2012} in the particular case of periodic-type IETs). In \cite{Fr-Ul_BS}, the \emph{existence} of ergodic skew-products over IETs given by cocycles with symmetric logarithmic singularities is proved for any $d\geq 2$ and any combinatorics, but the class of cocycles considered is not explicit. In particular, it is shown that for almost every IET and every cocycle $\varphi $ with symmetric logarithmic singularities, there exists a piecewise constant function $\chi$, constant on the intervals exchanged by $T$, called \emph{correction}, such that the skew-product $T_{\varphi -\chi}$ given by the corrected cocycle $\varphi -\chi$ is ergodic. While this provides a large class of cocycles with symmetric logarithmic singularities whose $\R$-extension is ergodic (indeed, corrected cocycles form a subspace of finite codimension), it is not an \emph{explicit} class (since it is not clear how to check whether a cocycle is indeed \emph{corrected}). On the other hand, the assumption we make on the roof, namely being odd with respect to the involution, is explicit and checkable. 

%We believe it is likely that the class of \emph{corrected} cocycles with 
%logarithmic symmetric singularities in genus two coincides with cocycles that 
%are odd with respect to the involution (at least as dimension count, this is 
%possible), but that a proof of this is essentially equivalent to the proof of 
%the main steps in the present paper. 

As an application of Theorem~\ref{thm:reductiontoIETs}, we build ergodic extensions of locally Hamiltonian flows in genus two (see \S~\ref{sc:loc_ham}). We state {the precise} result in the next section (see Theorem~\ref{thm:ergodic_extension}).

\subsection{Ergodicity of locally Hamiltonian flows extensions}
\label{sc:loc_ham}
Let $\phi_\R = (\phi_t)_{t \in \R}$ be a smooth area-preserving flow (or equivalently, a locally Hamiltonian flow, see for example \cite{Fr-Ul_BS}) on a compact closed surface $M$. Given a continuous function $f:M \to \R$, the associated extension flow on $M \times \R$ is denoted by $(\phi_t^f)_{t \in \R}$ and given by
\begin{equation}
\label{eq:extension_flow}
 \phi_t^f(x, a) = \left(\phi_t(x), a + \int_0^t f(\phi_s(x))ds\right),
\end{equation}
for any $x \in M$, and any $a, t \in \R.$ Notice that the flow {$\phi^f_\R$} has a skew-product form, i.e., it \emph{projects} on the $M$ coordinate to the flow $\phi_\R$, and that the motion in the $\R$ fiber is determined by the oscillations of the \emph{ergodic integrals} of $f$ along $\phi_\R$. These flows arise {in the work of Poincaré \cite{poincare_sur_1885}} by coupling $\phi_\R$ with the solutions of the differential equation on $\R$
\[
\frac{\mathrm{d}\,\! y}{\mathrm{d}\,\! t}= f(\psi_t(p)), \qquad y\in \R,\ \quad t\in \R.
\]
For surfaces of \emph{genus one}, the existence of ergodic extensions was first discovered by Krygin \cite{Kr} in the case where the flow $\phi_\R$ has no singularities. {
For flows on the torus with singularities, a much-studied class of locally Hamiltonian flows first studied by Arnold in \cite{arnold_topological_1991},} {and, in particular,} for an \emph{Arnold flow} (namely restriction to the minimal component of a locally Hamiltonian flow in genus one with one saddle and one center), ergodicity of extensions was proved for a full measure set of rotation numbers by Fayad and Lema\'nczyk in \cite{FL:ont} (as a corollary of the ergodicity of skew-products over rotations under an asymmetric cocycle $\varphi $ of the form \eqref{1asymm} {(the reduction is also explained in \S~\ref{sec:reduction} below)).}

As a consequence of Theorem~\ref{thm:reductiontoIETs} we can produce explicit examples of ergodic extensions of locally Hamiltonian flows in genus two. Let us recall that minimal locally Hamiltonian flows with only saddles are (singular) time-reparametrizations of linear flows on translation surfaces (an observation which goes back to Zorich, see \cite{Zo:how}, \cite{Rav:mix}). { More precisely, for a given locally Hamiltonian flow $(\phi_t)_{t \in \R}$, there exists a translation structure on $M$ and a non-negative function {$\alpha:M\to \R_{\geq 0}$, which is as regular as the vector field associated to $(\phi_t)_{t \in \R}$ and} vanishes at saddle points, such that if $X_v$ is the vertical vector field with respect to the translation structure, $(\phi_t)_{t \in \R}$ is generated by the rescaled vector field 
$\alpha X_v$.

We will say that {$\alpha$ is the \emph{generator} of} the time-change $(\phi_t)_{t \in \R}$. Let us also recall that every translation surface in genus two admits a \emph{hyperelliptic involution}, i.e., a continuous map $\mathcal{I}: M\to M$ such that $\mathcal{I} \neq \id_M$, $\mathcal{I}^2$ { is equal to the identity,} {and verifies 
\begin{equation}
\label{eq:vertical_involution} 
v_{{-t}}(\mathcal I(x)) = \mathcal I (v_{t}(x)),
\end{equation}
for any $x \in M$ and any $t \in \R$ for which the flows above are well-defined.
} We will refer to it as the \emph{underlying hyperelliptic involution}.

\smallskip
We will prove the following:

\begin{theorem}
\label{thm:ergodic_extension}
For a typical Hamiltonian flow $(\phi_t)_{t \in \R}$ { of class $C^2$} on a surface $M$ of genus $2$ with two isomorphic saddles, the extension $(\phi^f_t)_{t \in \R}$ given by a {$C^2$} function $f: M\to \mathbb{R}$ is ergodic if 
%if $(\phi_t)_{t \in \R}$ is generated by $\alpha X_v$ where $X_v$ is a vertical flow on a translation surface and $\alpha: X \to \mathbb{R}_{+\infty}$, 
%there exists an infinite dimensional space $\mathcal{F}_{odd}\subseteq \mathcal{C}^2(M)$ (the space of "\emph{odd}" observables) such that for any $f:M\to \mathbb{R}$ in $\mathcal{F}_{odd}$ 
the function $\alpha f$, where $\alpha$ is the generator of the time-change which gives $(\phi_t)_{t \in \R}$, is \emph{odd} with respect to the underlying hyperelliptic involution $\mathcal{I}$ on $M$, i.e.,
%The observables in $\mathcal{F}_{odd}$ are such that, for a suitably chosen section, the Poincaré map is given by a skew-product $T_\varphi $ with an odd cocycle $\varphi $ with logarithmic singularities, so that Theorem~\ref{thm:ergodic_extension} can be applied.
\begin{equation}
\label{eq:odd_observable}
 \alpha f \circ \mathcal{I} = - \alpha f.
\end{equation}
\end{theorem}}
{ The notion of \emph{typical} is measure-theoretical and refers to the measure class induced by the Katok fundamental class (see e.g.~\cite{Ul:ICM}). } 
%An infinite dimensional explicit class of examples of functions in $\mathcal{F}_{odd}$ can be constructed by ri.e.~
In \cite{Fr-Ul_BS}, the existence of ergodic extensions of typical minimal locally Hamiltonian flows with non-degenerate saddles was proved in any genus by Frączek and the third author (extending the previous work \cite{fraczek_ergodic_2012} by the same authors treating only a measure zero class of such flows) under the assumption that the observable $f$ belongs to a finite codimension (but still infinite dimensional) subspace (which corresponds to the subspace of \emph{corrected} cocycles mentioned in the previous section \S~\ref{sec:intro}). { The nature of this class, though, is mysterious, and it is unclear how to effectively check that a given cocycle is indeed \emph{corrected} and hence induces an ergodic extension. On the other hand, {as explained below}, Theorem~\ref{thm:ergodic_extension} allows defining an infinite dimensional space of functions that induce explicit examples of ergodic extensions.} 

% For extensions of flows in this special class, though, we could provide a complete description of the ergodic behavior and prove a dichotomy between ergodicity and reducibility.a property completely opposite to
%ergodicity is \emph{reducibility}. If the skew-product on $M\times \R$ is \emph{reducible} (see \S~\ref{sec:extensionsdef} for the definition), $M\times \R$ is foliated into invariant sets for $\Phi^f_\R$, on which the dynamics is conjugated to $\psi_\R$ on $M$.

In \S~\ref{sec:reduction}, we show how Theorem~\ref{thm:ergodic_extension} can be reduced to an application of Theorem~\ref{thm:reductiontoIETs}. Indeed, we will show that, by taking a suitable Poincaré section, the study of the ergodicity of extension flows $(\phi_t)_{t \in \R}$ in the statement of Theorem~\ref{thm:ergodic_extension} can be reduced to the study of ergodicity of skew-products of the form \eqref{eq:skew-product}. 
{ We will show more generally that 
there exists a class $\mathcal{F}_{odd}\subseteq \mathcal{C}^2(M)$ of observables (that we call ``\emph{odd}'') such that for any $f:M\to \mathbb{R}$ in $\mathcal{F}_{odd}$ 
the extension $(\phi^f_t)_{t \in \R}$ is ergodic (see Remark~\ref{rk:odd_observables}) and that this space $\mathcal{F}_{odd}$ contains the infinite-dimensional space of functions $f: M\to \mathbb{R}$ which are odd with respect to the underlying hyperelliptic involution in the sense of \eqref{eq:odd_observable}. }
%such that $\alpha f$, where $\alpha$ is the function which defines the time changed, is \emph{odd} with respect to the underlying hyperelliptic involution $\mathcal{I}$ on $M$ i.e.}

\subsection{Outline of the proof}
Let us conclude the introduction by presenting some of the ideas in the proof of Theorem~\ref{thm:reductiontoIETs}. To show ergodicity, we make use of a criterion (stated here below as Proposition~\ref{prop:ergodicity_criterion}) for the ergodicity of skew-products introduced and used in \cite{fraczek_ergodic_2012, lemanczyk_ergodic_1996}. Let us first recall the definition of partial rigidity sets. 

\begin{definition}[Partial rigidity { sets}]
\label{def:partial_rigidity}
Given a probability preserving transformation $T:(X,\mu) \to (X, \mu)$, we say that a sequence of Borel subsets $(\Xi_n)_{n \in \N}$ of $X$ is a sequence of \emph{partial rigidity sets} if there exists an increasing sequence of natural numbers $(h_n)_{n \in \N}$, called \emph{rigidity times}, and $\delta>0$ such that, {as $n\to \infty$}: 
\begin{enumerate}[(i)]
 \item \label{cond:towers_measure} $\mu(\Xi_n) \to \delta > 0 $, 
 \item \label{cond:quasi_invariance} $\mu(\Xi_n \Delta T(\Xi_n)) \to 0$, 
 \item \label{cond:partial_rigidity} $\sup_{x \in \Xi_n} d(x, T^{h_n}x) \to 0$.
\end{enumerate}
We then say that $(\Xi_n)_{n \in \N}$ is a sequence of $\delta$-partial rigidity sets along the sequence $(h_n)_{n \in \N}$ of rigidity times.
\end{definition}
\noindent In particular, this definition guarantees that the iterates $T^{h_n}$, when restricted to $\Xi_n$, converge pointwise to the identity. The following criterion shows that ergodicity follows from the existence of partial rigidity sets $(\Xi_n)_{n \in \N}$ and rigidity times $(h_n)_{n \in \N}$ such that the \emph{Birkhoff sums} $(S_{h_n} \varphi (x))_{n\in \N}$ 
%which are given by, 
%\begin{equation}\label{def:BS}
%S_n \varphi := 
%\begin{cases} \sum_{j=0}^{n-1}\varphi \circ T^j , & \text{if}\ n>0,\\
%0 , & \text{if}\ n=0,\\
%\sum_{j=n}^{-1}\varphi \circ T^{-j} , & \text{if}\ n<0.\\
%\end{cases}
%\end{equation}
are \emph{tight} (Assumption \ref{tightness_assumption} in Proposition \ref{prop:ergodicity_criterion}) when restricted restricted to $(\Xi_n)_{n \in \N}$ while having enough \emph{oscillation} (Assumption \ref{oscillations_assumption} in Proposition \ref{prop:ergodicity_criterion}).

\begin{proposition}[Ergodicity criterion, \cite{fraczek_ergodic_2012, lemanczyk_ergodic_1996}]
\label{prop:ergodicity_criterion}
Let $T:(X,\mu) \to (X, \mu)$ be an automorphism of a compact metric space $X$ preserving an ergodic probability measure $\mu$ and let $\varphi \in L^1(X, \mu)$. Suppose there exist {$\delta$}-partial rigidity sets $(\Xi_n)_{n \in \N}$ {for some $\delta>0$} (in the sense of Definition~\ref{def:partial_rigidity}) and corresponding rigidity times $(h_n)_{n \in \N}$ such that the following two assumptions hold:
\begin{enumerate}
 \item\label{tightness_assumption} \emph{BS Tightness Assumption}: \label{cond:tightness} $\sup_{n \in \N} \int_{\Xi_n} |S_{h_n} \varphi (x) d\mu(x)| < +\infty$.
 \item \label{oscillations_assumption} \emph{BS Oscillations Assumption}: \label{cond:decay_coefficients} $\limsup_{k \to \infty }\limsup_{n \to \infty} \left| \int_{\Xi_n} e^{2\pi k i S_{h_n} \varphi (x) } d\mu(x) \right| < \delta$.% $\qquad $ 
\end{enumerate}
Then the associated skew-product $T_\varphi $ is ergodic. 
\end{proposition}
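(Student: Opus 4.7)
The plan is to argue by contradiction: assume $T_\varphi$ is not ergodic. The starting point is the classical cohomological reduction (going back to Schmidt \cite{schmidt_cocycles_1977} and used implicitly in \cite{fraczek_ergodic_2012, lemanczyk_ergodic_1996}): since $T$ is ergodic and $\mu$ is a probability, the $\R$-extension $T_\varphi$ fails to be ergodic if and only if there exist $t \in \R \setminus \{0\}$ and a measurable $\psi : X \to S^1$ satisfying the multiplicative cohomological equation
\begin{equation*}
\psi(Tx) = e^{2\pi i t \varphi(x)} \psi(x) \quad \text{for $\mu$-a.e.\ } x \in X.
\end{equation*}
Fix such a pair $(t, \psi)$. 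Iterating the equation yields $e^{2\pi i t S_{h_n} \varphi(x)} = \psi(T^{h_n} x)\,\overline{\psi(x)}$; the tightness assumption (1) is what guarantees $S_{h_n}\varphi$ to be $\mu$-integrable on $\Xi_n$, and in particular finite $\mu$-a.e.\ there, so that this identity makes sense pointwise.

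The next step is to convert partial rigidity of $T$ along $(h_n)$ into uniform convergence to $1$ of the twisted Birkhoff sums on a large subset of $\Xi_n$. Given $\eta > 0$, by Lusin's theorem choose a compact $K_\eta \subset X$ with $\mu(X \setminus K_\eta) < \eta$ on which $\psi$ is uniformly continuous, and set $A_n := \Xi_n \cap K_\eta \cap T^{-h_n}(K_\eta)$. Since $T$ preserves $\mu$, one has $\mu(A_n) \geq \mu(\Xi_n) - 2\eta$, hence $\liminf_n \mu(A_n) \geq \delta - 2\eta$. Property (iii) of Definition \ref{def:partial_rigidity} gives $d(T^{h_n} x, x) \to 0$ uniformly on $\Xi_n \supset A_n$, and combining this with the fact that both $x$ and $T^{h_n} x$ lie in $K_\eta$ yields $\psi(T^{h_n} x)\overline{\psi(x)} \to 1$ uniformly on $A_n$. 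Bounding the integral over $\Xi_n \setminus A_n$ trivially in absolute value by $2\eta$, one obtains
\begin{equation*}
\liminf_{n \to \infty}\Bigl|\int_{\Xi_n} e^{2\pi i t S_{h_n} \varphi(x)}\, d\mu(x)\Bigr| \geq \delta - 4\eta,
\end{equation*}
and, since $\eta>0$ was arbitrary, the liminf is in fact $\geq \delta$.

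The punchline uses a power trick to activate the oscillations hypothesis. For every $j \in \Z \setminus \{0\}$, the function $\psi^j$ is also a measurable map into $S^1$ and satisfies the cohomological equation with eigenvalue $jt$ in place of $t$; rerunning the previous paragraph verbatim with $(\psi, t)$ replaced by $(\psi^j, jt)$ yields
\begin{equation*}
\liminf_{n \to \infty}\Bigl|\int_{\Xi_n} e^{2\pi i (jt) S_{h_n} \varphi(x)}\, d\mu(x)\Bigr| \geq \delta \quad \text{for every } j \in \Z \setminus \{0\}.
\end{equation*}
Sending $j \to \infty$ so that $|jt| \to \infty$ directly contradicts the oscillations assumption (2). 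The step I would expect to require most care is the initial cohomological reduction: transferring non-ergodicity of $T_\varphi$ on the infinite-measure space $(X \times \R, \mu \otimes \lambda_\R)$ to the existence of an honest $L^\infty$ character solution $\psi$ requires either an appeal to Mackey's theorem on ergodic group extensions or a direct fibrewise Fourier decomposition argument. Once that is in place, the rigidity-plus-Lusin step and the $\psi \mapsto \psi^j$ power trick are both soft and robust.
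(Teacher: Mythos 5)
The paper does not in fact prove this proposition; it only cites \cite{fraczek_ergodic_2012, lemanczyk_ergodic_1996}. Your outline --- pass to a multiplicative cohomological equation $\psi\circ T = e^{2\pi i t\varphi}\psi$ via Schmidt's essential-value theory, use Lusin together with partial rigidity along $(h_n)$ to force $\liminf_n\bigl|\int_{\Xi_n}e^{2\pi i t S_{h_n}\varphi}\,d\mu\bigr|\geq\delta$, then amplify via $\psi\mapsto\psi^j$ to contradict the oscillation estimate --- is the standard route that those references take, and the Lusin/rigidity step and the power trick are both carried out correctly.

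Two points need attention, though. First, your argument never actually uses the tightness assumption (Assumption~1). The role you ascribe to it, making $S_{h_n}\varphi$ a.e.\ finite and integrable on $\Xi_n$, is already guaranteed by $\varphi\in L^1(X,\mu)$ and $T$ being measure-preserving, so that remark is a red herring. Tightness earns its keep precisely inside the reduction step you flag as delicate: the implication $\textit{``}T_\varphi\text{ not ergodic}\Rightarrow\exists\,t\neq0,\psi\textit{''}$ is part of Schmidt's duality between essential values and the group of $t$ for which $e^{2\pi i t\varphi}$ is a multiplicative (quasi-)coboundary, and its clean form requires $T_\varphi$ to be conservative, i.e.\ $\int_X\varphi\,d\mu=0$. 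A short Birkhoff--Egoroff argument shows that $\sup_n\int_{\Xi_n}|S_{h_n}\varphi|\,d\mu<\infty$ together with $\mu(\Xi_n)\to\delta>0$ already forces $\int_X\varphi\,d\mu=0$; that is where Assumption~1 should be invoked, rather than as a regularity crutch. Second, your closing contradiction is produced at frequencies $k=jt$ with $j\in\Z\setminus\{0\}$, and $t$ need not be rational, let alone an integer; so the oscillation hypothesis must be read with $k$ ranging over $\R$ with $|k|\to\infty$, not over $\N$. The paper's own use of the criterion is consistent with this (the estimate in Lemma~\ref{lem:decay_coefficients} holds for every real $k$), but you should make the interpretation explicit, since a reader who takes $k\in\N$ literally will be unable to close your argument.
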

\noindent The criterion stated above is proved in {[Proposition 2.3, \cite{fraczek_ergodic_2012}]} (see also {[Proposition 12, \cite{lemanczyk_ergodic_1996}}). Assumption \ref{tightness_assumption} guarantees that the sequence of distributions of Birkhoff sums $(S_{h_n} \varphi |_{\Xi_n})_{n\in\N}$, when seen as random variables where the variable $x$ is uniformly distributed on $\Xi_n$, is \emph{tight}, which guarantees \emph{tightness of the Birkhoff sums} along partial rigidity sets. Assumption \ref{oscillations_assumption} can be verified by showing that the Birkhoff sums $S_{h_n} \varphi $ display sufficient oscillations; from this, the name \emph{Birkhoff sums oscillations}.

In our case, the sets $(\Xi_n)_{n \in \N}$ in the criterion will be given by appropriate Rohlin towers, which we obtain as subsets of the towers provided by the Rauzy-Veech induction procedure. By picking the induction times appropriately, the measure of these towers will be bounded from below (so that Condition \eqref{cond:towers_measure} of Definition~\ref{def:partial_rigidity} holds), and they will be rigid (in the sense of Conditions \eqref{cond:quasi_invariance}, \eqref{cond:partial_rigidity} of Definition~\ref{def:partial_rigidity}). Furthermore, one can guarantee that iterates of points in the tower up to rigidity times do not come too close to singularities. 

To {verify the Assumption}~\ref{cond:tightness}, { it is crucial to exploit that the logarithmic singularities are \emph{symmetric}} (otherwise, when the singularities are asymmetric, there cannot be tightness, see, e.g., \cite{Ulcigrai_mixing_2007, Rav:mix, Sch:mix}). To prove tightness, we rely on the cancellations proved in \cite{ulcigrai_absence_2011}, %(see also \cite{} in genus two), 
which guarantee that Birkhoff sums of the derivatives can be controlled far from singularities. We also need to control some values of the {cocycle} (at { midpoints of each continuity interval}). The latter control is achieved by exploiting the {symmetry of the permutation} combined with the assumption { that the cocycle is \emph{odd}}.

Finally, to prove that together with tightness (i.e., Assumption \ref{tightness_assumption}) there are also oscillations (i.e., Assumption \ref{oscillations_assumption} holds), which are crucial to guarantee ergodicity (since cocycles which are coboundaries are trivially tight, but do not produce ergodicity), it will be helpful to have a quantitative control \emph{from below} of the derivative of Birkhoff sums on the sets $(\Xi_n)_{n \in \N}$. This can be achieved (see Theorem \ref{thm:derivativeestimates}) guaranteeing that the orbits up to the rigidity time of the points in the rigidity set $\Xi_n$ do get close (although in a controlled fashion) to discontinuities on which the cocycle $\varphi $ has a non-trivial logarithmic singularity. 

\section{Preliminaries and reduction}

First, let us define several notions we informally mentioned in the introduction (particularly the class of cocycles we consider). We then show in \S~\ref{sec:reduction} how to deduce the result about extensions of locally Hamiltonian flows in genus two from the result on skew-products.

\subsection{Basic definitions}\label{sec:defs}
\subsubsection{Symmetric interval exchange maps}\label{sec:IETs}

 Let $\mathcal{A}$ be a $d$-element alphabet. An interval exchange transformation $T = (\pi, \lambda)$ is a piecewise isometry of the interval $I\subset\R$ determined by a pair $\pi=(\pi_0,\pi_1)$ (to which we refer to as a \emph{permutation}) of bijections $\pi_\vep:\mathcal{A}\to\{1,\ldots,d\}$, for $\vep=0,1$, and a vector $\lambda=(\lambda_\alpha)_{\alpha\in\mathcal{A}}\in \R_{>0}^{\mathcal{A}}$. For any $\lambda=(\lambda_\alpha)_{\alpha\in\mathcal{A}}\in \R_{>0}^{\mathcal{A}}$, let
\[|\lambda|=\sum_{\alpha\in\mathcal{A}}\lambda_\alpha,\quad I=\left[0,|\lambda|\right),\]
 and define
\[I_{\alpha}=[l_\alpha,r_\alpha),\qquad \text{ where } \qquad l_\alpha=\sum_{\pi_0(\beta)<\pi_0(\alpha)}\lambda_\beta,\;\;\;r_\alpha
=\sum_{\pi_0(\beta)\leq\pi_0(\alpha)}\lambda_\beta.\] 
With these notations, $|I_\alpha|=\lambda_\alpha$. We consider only \emph{irreducible} permutations, that is, only pairs of bijections $\pi_1, \pi_2: \A \to \{1, \dots, d\}$ for which $\pi_1\circ\pi_0^{-1}\{1,\dots,k\}=\{1,\ldots,k\}$ implies $k=d$. We denote by $S^{\mathcal A}$ the set of all irreducible permutations. Denote by $\Omega_\pi$ the matrix
$[\Omega_{\alpha\,\beta}]_{\alpha,\beta\in\mathcal{A}}$ given by
\begin{equation}\label{eq:Omega}
\Omega_{\alpha\,\beta}=
\left\{\begin{array}{cl} +1 & \text{ if
}\pi_1(\alpha)>\pi_1(\beta)\text{ and
}\pi_0(\alpha)<\pi_0(\beta),\\
-1 & \text{ if }\pi_1(\alpha)<\pi_1(\beta)\text{ and
}\pi_0(\alpha)>\pi_0(\beta),\\
0& \text{ in all other cases.}
\end{array}\right.\end{equation}
Then, the IET $T_{(\pi,\lambda)}$ associated with $(\pi,\lambda)$ is given by $T_{(\pi,\lambda)}x=x+w_\alpha$, for any $x\in I_\alpha$, where $w=\Omega_\pi\lambda$. %%% { remove that}

\smallskip
 Let $End(T)$ stand for the set of endpoints of the intervals $\{I_\alpha\}_{\alpha\in\mathcal{A}}$. A pair ${(\pi,\lambda)}$ satisfies the {\em Keane condition} if $T_{(\pi,\lambda)}^m (l_{\alpha})\neq l_{\beta}$, for all $m\geq 1$, and for all $\alpha,\beta\in\mathcal{A}$ with $\pi_0(\beta)\neq 1$. Keane \cite{keane_interval_1975} showed that an IET with an irreducible permutation that satisfies this condition is minimal.
%Denote by $\mathscr{S}^0_{\mathcal{A}}$ the subset of irreducible pairs,

We often consider the case of normalized IETs, that is when $|I|=1$. For this purpose, we denote by $\Lambda^{\mathcal A}$ the positive simplex of all vectors $\la\in\R^{\A}_{>0}$ such that $|\la|=1$.

\smallskip
\noindent{\it Symmetric permutations.} In this paper, we will be mostly concerned with \emph{symmetric combinatorics} (which fully reverse the order of the intervals), i.e., such that $$\pi_{1}\circ {\pi_0^{-1}}(i)={d} - i + 1,$$ for every $1\leq i\leq d$. 
%We will denote by $\pi^{sym}_d$ any pair $(\pi_0,\pi_1)$
%such that $\pi_1\circ\pi_0^{-1}(j)=d+1-j$ for $1\leq j\leq d$.
Notice that 
%Given a pair $T = (\pi, \lambda)$ on $d$ intervals 
when $\pi$ is the symmetric permutation, the discontinuities of $T$ and $T^{-1}$, which we denote by $0 = u_0^t < u_1^t < \dots < u_d^t = 1$ and $0 = u_0^b < u_1^b < \dots < u_d^b = 1$, respectively, are given by
\begin{equation}
\label{eq:discontinuities_symmetric}
u_k^t = \sum_{\pi_0(\alpha) < k} \lambda_\alpha, \quad\quad u_k^b = \sum_{\pi_1(\alpha) < k} \lambda_\alpha = \sum_{\pi_0(\alpha) \geq d + 1 - k} \lambda_\alpha {= 1 - u_{d - k}^t}.
\end{equation}
{Thus, for any ${1} \leq k \leq d$, we have that
\begin{equation}
\label{eq:image_discontinuities_symmetric}
T(l_{\pi_0^{-1}(k)}) = T(u_{k - 1}^t) = u_{d - k}^b.
\end{equation}}
%Notice also that, if $\pi^{(n)} = \pi$ and $ \pi_0(\alpha) = d$, then $ l_{\alpha} = T^{q_\alpha^{(n)} - 1}\big(l_\alpha^{(n)}\big).$

\subsubsection{{Odd} cocycles with logarithmic singularities}\label{sec:logsing}

Let $\A$ be an alphabet of $d \geq 3$ elements and consider a partition $I = \bigsqcup_{\alpha \in \A} I_\alpha$ of $I$ into a finite number of subintervals. Throughout this work, we will be interested in the partition $I = \bigsqcup_{\alpha \in \A} I_\alpha$ given by the intervals exchanged by an IET. 
We denote by $C^r(\bigsqcup_{\alpha \in \A} I_\alpha)$ the space of functions $\varphi :I \to \R$ such that $\varphi \mid_{I_\alpha}$ is of class $C^r$, for each $\alpha \in \A$. Similarly, we denote by $AC(\bigsqcup_{\alpha \in \A} I_\alpha)$ (resp. $BV(\bigsqcup_{\alpha \in \A} I_\alpha)$) the space of functions whose restrictions to each interval are absolutely continuous (resp. of bounded variation).

\medskip
{\noindent \emph{Symmetric logarithmic singularities.}
We will consider the following class of singular cocycles:}
\begin{definition}[Logarithmic cocycles]\label{def:logsing}
 We say that the cocycle $\varphi \in C(\bigsqcup_{\alpha \in \A} I_\alpha)$ has \emph{logarithmic singularities} if there exist constants $C_\alpha^+, C_\alpha^- \in \R$, for each $\alpha \in \A$, and a function $g_\varphi \in AC(\bigsqcup_{\alpha \in \A} I_\alpha)$ such that $g_\varphi ' \in BV(\bigsqcup_{\alpha \in \A} I_\alpha)$ and 
\begin{equation}
\label{eq:logartihmic_singularities}
 \varphi (x) = g_\varphi (x) + \sum_{\alpha \in \A} C_\alpha^+ \log\left(|I|\left\{\frac{(x - l_\alpha)}{|I|}\right\}\right) + C_\alpha^- \log\left(|I|\left\{\frac{(r_\alpha - x)}{|I|}\right\}\right).
\end{equation}
\noindent Furthermore, we say that the logarithmic singularities of $\varphi $ are \emph{non-trivial} if 
\[C_{\pi_0^{-1}(1)}^+ + C_{\pi_0^{-1}(d)}^+ \neq 0 \quad \text{ or } \quad C_{\pi_0^{-1}(k)}^+ \neq 0, \text{ for some } 1 < k < d.\] %%% Explain?
\end{definition}
\noindent 
The non-triviality condition guarantees that at least one branch has a nontrivial logarithmic singularity. The reason for the formulation (in particular, { the first of the above conditions that asks for a \emph{sum} of two constants to be non-zero) is technical (and related to the special role played by the preimage of the endpoint $0$ as discontinuity)}. Still, it is easy to check in the class cocycles with a geometric origin (i.e., appear from the reduction of an extension of a locally Hamiltonian flow to a skew-product). 
 
\begin{definition}\label{def:longsingspace}
We denote by $LS^r(\bigsqcup_{\alpha \in \A} I_\alpha)$ the space of functions in $C^r(\bigsqcup_{\alpha \in \A} I_\alpha)$ having \emph{non-trivial} logarithmic singularities. 
\end{definition}

\begin{definition}[Symmetric logarithmic cocycles]\label{def:symlog}
Given $\varphi \in LS^r(\bigsqcup_{\alpha \in \A} I_\alpha)$ as in \eqref{eq:logartihmic_singularities}, we say that its logarithmic singularities are \emph{symmetric} if
\begin{equation}
\label{eq:symmetric_singularities}
 \sum_{\alpha\in \A} C_\alpha^- = \sum_{\alpha\in \A} C_\alpha^+.
\end{equation}
We denote by $LSS^r(\bigsqcup_{\alpha \in \A} I_\alpha)$ the space of functions with logarithmic symmetric singularities. Otherwise, we say that the logarithmic singularities are \emph{asymmetric}. 
\end{definition}
\noindent 
{Note that in the locally Hamiltonian flows literature (see e.g.~\cite{ulcigrai_absence_2011, fraczek_asymptotic_2021}), when logarithmic singularities are considered, it is usually for roofs of special flows, so that the constants $C_\alpha^\pm$ are all positive (or non-negative). We stress that in this definition for \emph{cocycles} (as in \cite{fraczek_ergodic_2012, fraczek_asymptotic_2021}), the constants $C_\alpha^\pm$ are real numbers, not necessarily positive.}

\medskip
{\noindent \emph{Odd cocycles.}}
Let us now define the class of cocycles that we call \emph{odd on each interval}. Given an interval $I = [a, b] \subseteq \R$, we denote by $\mathcal{I}_I:I \to I$ the involution given by
\begin{equation}\label{eq:intervalI}\mathcal{I}_I(x) = a + b - x, \qquad \text{for\, all}\ x\in I.
\end{equation}
{ One can check{ , by a direct calculation, that any symmetric IET $T: I \to I$ verifies} \begin{equation}
\label{eq:characterization_symmetric}
T^{-1} \circ \mathcal I_I = \mathcal I_I \circ T.
\end{equation}
}
\begin{definition}[Cocycles odd on each {continuity} interval]\label{def:odd}
A cocycle $\varphi $ in $LS^r(\bigsqcup_{\alpha \in \A} I_\alpha)$ is \emph{odd with respect to each {continuity} interval} if the cocycle $\varphi $ restricted to each $I_\alpha $ is an odd symmetric function with respect to the middle point of the restriction interval, i.e.,
\begin{equation}\label{eq:Iinv}
\varphi (x)=-\varphi ( \mathcal{I}_{I_\alpha}(x)) \qquad \text{for\ all}\ x\in I_\alpha, \, \alpha \in \mathcal{A}.
\end{equation}
\end{definition}
\begin{figure}[h]\label{fig:odd_cocycle}
	\centering
		\includegraphics{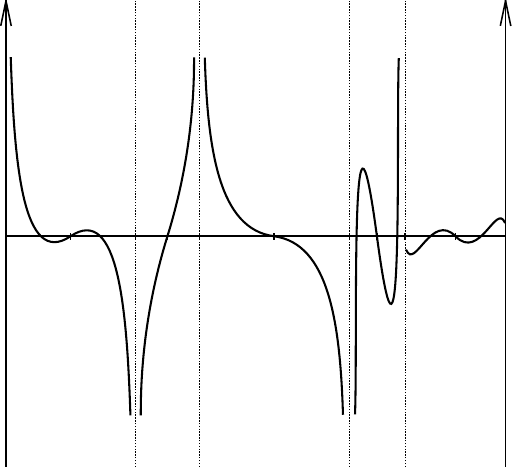}
		\captionsetup{width=0.75\linewidth}
		\caption{\small An example of a cocycle over five intervals with non-trivial logarithmic singularities, which is odd {on} each of them.}
\end{figure}
\smallskip

Given an IET $T: I \to I$ with a \emph{symmetric} permutation, { the oddness condition stated above can be equivalently rephrased as a request that the cocycle has the following behavior under the \emph{involution} $\mathcal{I}_{I}$ of the domain of definition $I$ of $T$.}

\begin{lemma}[Oddness and involution, see also \cite{chaika_singularity_2021}]\label{lemma:involutionodd}
\label{lem:odd_cocycle}
Let $T: I \to I$ be an IET with a symmetric permutation. A cocycle $\varphi : I \to \R$ is odd on each continuity interval of $T$ if and only if 
\begin{equation}
\label{eq:oddly_symmetric}
\varphi \circ T^{-1} = - \varphi \circ \mathcal{I}_I.
\end{equation}
\end{lemma}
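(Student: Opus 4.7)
My plan is to reduce the claimed equivalence to the single geometric identity
\[
\mathcal{I}_I \circ T\,(x) \;=\; \mathcal{I}_{I_\alpha}(x), \qquad x \in I_\alpha,\ \alpha \in \A,
\]
which is the ``simple calculation'' alluded to in the paragraph preceding the lemma. Once this identity is in hand, the lemma becomes formal: the oddness-on-each-interval hypothesis $\varphi(x) = -\varphi(\mathcal{I}_{I_\alpha}(x))$ rewrites as $\varphi(x) = -\varphi(\mathcal{I}_I(T(x)))$, and the substitution $y = T(x)$, applied separately on each $I_\alpha$ (where $T$ is a translation, hence a bijection onto $T(I_\alpha)$, and the images $T(I_\alpha)$ cover $I$ up to finitely many points), converts this into $\varphi(T^{-1}(y)) = -\varphi(\mathcal{I}_I(y))$, which is exactly \eqref{eq:oddly_symmetric}. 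The converse direction is obtained by reading the same chain of equalities backwards.

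To prove the geometric identity, I would compute the translation constant $w_\alpha$ for which $T|_{I_\alpha}(x) = x + w_\alpha$. By definition, $l_\alpha + w_\alpha$ is the left endpoint of $T(I_\alpha)$, which for a general IET equals $\sum_{\pi_1(\beta) < \pi_1(\alpha)} \lambda_\beta$. For the symmetric permutation, $\pi_1 \circ \pi_0^{-1}(i) = d + 1 - i$, so the inequality $\pi_1(\beta) < \pi_1(\alpha)$ is equivalent to $\pi_0(\beta) > \pi_0(\alpha)$, and hence
\[
l_\alpha + w_\alpha \;=\; \sum_{\pi_0(\beta) > \pi_0(\alpha)} \lambda_\beta \;=\; |I| - r_\alpha \;=\; 1 - r_\alpha,
\]
using $|I|=1$. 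Therefore $w_\alpha = 1 - l_\alpha - r_\alpha$, and for every $x \in I_\alpha$,
\[
\mathcal{I}_I(T(x)) \;=\; 1 - (x + w_\alpha) \;=\; l_\alpha + r_\alpha - x \;=\; \mathcal{I}_{I_\alpha}(x),
\]
as required.

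There is no real obstacle in this argument; the only step that warrants a moment of attention is the combinatorial observation $\pi_1(\beta) < \pi_1(\alpha) \Longleftrightarrow \pi_0(\beta) > \pi_0(\alpha)$ used above, which is the precise manifestation of the symmetric-permutation hypothesis and is exactly why the analogous statement would fail for a general irreducible permutation. Everything else is a three-line change of variables.
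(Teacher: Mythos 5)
Your proof is correct and follows exactly the route the paper indicates: the paper's sentence preceding the lemma ("a simple calculation shows that ... the reflection of $x$ with respect to the middle point of $I_\alpha(T)$ is given by $\mathcal{I}_I\circ T(x)$") is precisely the geometric identity $\mathcal{I}_I\circ T|_{I_\alpha}=\mathcal{I}_{I_\alpha}$ that you establish via the translation vector $w_\alpha$, and the paper then defers the routine substitution to \cite{fraczek_ergodic_2012}. You have simply filled in those omitted details, and both the computation of $w_\alpha$ (including the combinatorial equivalence $\pi_1(\beta)<\pi_1(\alpha)\Leftrightarrow\pi_0(\beta)>\pi_0(\alpha)$ coming from $\pi_1\circ\pi_0^{-1}(i)=d+1-i$) and the change of variables $y=T(x)$ check out.
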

\noindent {The proof is based on a simple calculation, which also appeared in \cite{chaika_singularity_2021}, and we include here to underline the role of symmetry.}
\begin{proof}
Let $\alpha \in \A$ and $x \in I_\alpha$. {Let us first show that since $T$ has a symmetric permutation
\begin{equation}\label{eq:symmetrypi} \mathcal{I}_{I_\alpha}(x) = \mathcal{I}_I \circ T(x).\end{equation}
%\begin{equation}
%\label{eq:image_discontinuities_symmetric}
%l_{\pi_0^{-1}(k)} = u_{k - 1}^t, \quad\quad T(u_{k - 1}^t) = u_{d - k}^b,
%\end{equation}
Indeed, if $0\leq i<d$ is such that $I_\alpha=[u^t_i,u^t_{i+1})$, $T$ maps linearly $I_\alpha$ onto $[|I|-u^t_{i+1},|I|-u^t_{i})$, i.e.,
$T(x)=x+|I|-u^t_i-u^t_{i+1}$
%\text{ for all }
for all $x\in [u^t_i,u^t_{i+1})$. 
%\]
Thus, one can verify directly that \eqref{eq:symmetrypi} holds, since if $x\in [u^t_i,u^t_{i+1})$, 
$$\mathcal{I}_I(T(x))=|I| - T(x)= |I|- x- |I|+u^t_i+u^t_{i+1}= u^t_i+u^t_{i+1} - x= \mathcal{I}_{I_\alpha}(x),$$
where the last equality follows by the definition \eqref{eq:intervalI} of $\mathcal{I}_{I_\alpha}$.
\smallskip
We now claim that a measurable function $\phi: I\to\R\cup\{\pm\infty\}$ is odd on each interval if and only if it satisfies \eqref{eq:oddly_symmetric}, which precomposing with $T^{-1}$ can be equivalently rewritten as $\varphi =-\varphi \circ\mathcal{I}\circ T^{-1}$. 
%satisfies $\phi\circ T^{-1}\circ S=-\phi$ if and only if
%\begin{equation}\label{eq:antisym}
%\phi(x)=-\phi(u^t_i+u^t_{i+1}-x)\text{ for all }x\in (u^t_i,u^t_{i+1})\text{ and any }0\leq i< d.
%\end{equation}
%Indeed, for every $x\in (u^t_i,u^t_{i+1})$ we have
%\[\phi( T^{-1} (Sx))=\phi( T^{-1} (|I|-x))=\phi(|I|-x -(|I|-u^t_i-u^t_{i+1}))=\phi(u^t_i+u^t_{i+1}-x).\]
%This gives our claim.
Indeed, this expression 
%Using the relation above, from equation \eqref{eq:oddly_symmetric}, which 
evaluated at $x \in I_\alpha$, in view of \eqref{eq:symmetrypi}, is equivalent to
\[ \varphi (x) = - \varphi \circ \mathcal{I}_I \circ T(x) = -\varphi (\mathcal{I}_{I_\alpha}(x)),\]
which is the definition of an odd cocycle with respect to each interval. }
\end{proof}

\subsubsection{Rauzy-Veech induction notation}\label{sec:RV}

The classical induction procedure for IETs, known as the \emph{Rauzy-Veech induction}, is fundamental to the study of IETs. %(as well as AIETs and GIETs). %%We did not introduce AIETs or GIETs before.
We assume the reader is familiar with it and refer otherwise to \cite{viana_ergodic_2006} or \cite{yoccoz_interval_2010} for a detailed introduction to the tools. Here we briefly recall the notation that we will use. 

\smallskip
\noindent {\it Notation for iterates.} Given an IET $T$ of $d\geq 2$ intervals corresponding to $(\pi, \lambda)$ and verifying Keane's condition, one can define, for any $n\in \N$, the iterates of $T$ under the Rauzy-Veech renormalization $\RV$, which are obtained by inducing $T$ on a suitably chosen sequence of nested subintervals shrinking towards the left endpoint of {$I^{0}$}$:=I$, selected so that the induced map is an IET of the same number $d$ of exchanged intervals and then rescaling the resulting map so that it is again defined on $I$. 
 
For any $n \geq 0$, we denote by ${I^n:=}I^n(T)$ the $n$-{th} inducing subinterval, by $T_n = \big(\pi^n, \lambda^n\big)$ the IET obtained inducing $T$ on $I^n(T)$, i.e., the first return map of $T$ to $I^n\subseteq I$. We denote by $\{{I^n_\alpha:=}I^n_\alpha(T)\}_{\alpha\in\A}$ the intervals exchanged by $T_n$ and by $ |I^n|$ the length of the inducing interval. The $n$-{th} iterate of Rauzy-Veech renormalization of $T$ is then given by
\[
\RV^n(T)(x):= \frac{T_n ({|I^n|} x)}{{|I^n|}}, \qquad \forall x \in I.
\] 
Finally we denote by $q^n = q^n(T) = (q^n_\alpha)_{\alpha \in \A}$ the vector in $\mathbb{N}^{\A}$ whose entries $q^n_\alpha$, $\alpha\in \A$, give the \emph{return time} of $I^n_\alpha$ { to } $I^n$ under the action of $T$. If there is no risk of confusion, we will omit the explicit dependence on $T$ in all of the above notations. 

Any finite sequence of permutations which {can be obtained as successive permutations of an orbit of the} Rauzy-Veech algorithm is called a \emph{Rauzy path} $\gamma$ of { combinatorial length $|\gamma|$ (where the combinatorial length is the number of permutations minus $1$)}. With every Rauzy path $\gamma$ we can associate a non-negative $d\times d$-matrix $A_\gamma$, called \emph{Rauzy matrix}, which satisfies the following: if $(\pi,\lambda)$ is such that $(\pi^i)_{i = 0}^{|\gamma|-1} = \gamma$, then $\la=A_{\gamma}\la^n$. An important use of this matrix comes from the fact that if $A_\gamma$ is a positive matrix for some Rauzy path $\gamma$, then there exists a constant $\rho(\gamma)>0$ such that, for any IET $(\pi,\la)$ verifying Keane's condition and any $n\ge|\gamma|$ such that $(\pi^i)_{i=n-|\gamma|+1}^{n} = \gamma$, we have
\begin{equation}\label{eq: matrixbound}
\max_{\alpha, \beta \in \A}\frac{q_\alpha^n}{q_\beta^n}<\rho(\gamma).
\end{equation}

\smallskip
\noindent {\it Polygons and zippered rectangles.} 
Given an IET $(\pi,\lambda)$ one can add a \emph{suspension datum} $\tau\in \Theta^\pi$, where
\begin{equation}
\label{eq:defTheta}
\Theta^\pi:=\left\{\tau\in\R^{\A}\,\left|\,\quad 
\sum_{\pi_1(\alpha)<k}\tau_{\alpha}<0< \sum_{\pi_0(\alpha)<k}\tau_{\alpha},\ 
\text{ for every }1< k\le d\right\}\right. .
\end{equation}
By drawing two broken line segments starting at 0 in $\mathbb C$, one created by taking the consecutive line segments $\la_{\pi_0^{-1}(i)}+i\tau_{\pi_0^{-1}(i)}$ and the other by taking $\la_{\pi_1^{-1}(i)}+i\tau_{\pi_1^{-1}(i)}$, and identifying the subsegments which correspond to the same symbol $\alpha\in\mathcal A$, we get a \emph{polygonal representation of a translation surface}. We denote by 
		\[
		\mathcal M:=\{(\pi,\lambda,\tau)\ |\ \ \pi\in S^{\A},\ \lambda\in \R^{\mathcal A}_{>0},\ \tau\in\Theta^\pi \}, \]
the space of all polygonal representations. { Notice that some of these polygonal representations may have self-intersections. However, in this paper, we will only use polygonal representations associated with triples for which this does not happen.} We also consider the space of \emph{normalized} polygonal representations
\begin{align*}
\tilde{\mathcal M} :=\{(\pi,\lambda,\tau)\in 	{\mathcal M}\ |& \ \ |\la|=1 \text{ and } Area((\pi,\lambda,\tau))=1 \}, \\ & { \text{where} \ Area((\pi,\lambda,\tau)):=\sum_{\alpha\in\mathcal{A}}\lambda_\alpha \left( \sum_{\pi_0(\beta)<\pi_0(\alpha)}\tau_{\beta}\right).
}		
\end{align*}
{To a triple $(\pi,\lambda,\tau)$ one can also associate a \emph{zippered 
rectangle} (see \cite{yoccoz_interval_2010}). These are {flat rectangles 
embedded in the translation surface} which have the intervals $I_{\alpha}$ as 
bases and whose heights are given by the vector 
$h=-\Omega_{\pi}\tau\in\R^{\mathcal A}_{>0}$.} The space of zippered rectangles 
constitutes another natural domain for the natural extension of Rauzy-Veech 
induction and provides another representation of the underlying translation 
surface. For a precise definition of zippered rectangles, see 
\cite{viana_lectures_2014} or \cite{yoccoz_interval_2010}. %More precisely, 
%given $(\pi,\lambda,\tau)\in\mathcal M$,

{
On the space $\tilde{\mathcal M}$ of triples $(\pi,\lambda,\tau)$ one can define the \emph{extended Rauzy-Veech induction} $\mathcal{R}: \tilde{\mathcal M}\to \tilde{\mathcal M}$, which provides an invertible extension of $\mathcal{R}: \mathcal{M}\to \mathcal{M}$. At the level of polygonal representations, the action of 
 \emph{extended Rauzy-Veech induction} $\mathcal R$ consists of cutting a 
 properly chosen triangle of the polygon $(\pi,\lambda,\tau)$ and gluing it to 
 one of the intervals corresponding to the second copy of one of its sides. % (when this operation start{s from a polygon without self-intersections, it produces a polygon with the same property } { this is not true!}. %from and produces a polygon without 
 %self-intersections).
 In this case, both polygons, before and after induction, represent the same translation surface. We denote by $\mathcal R^n(\pi,\lambda,\tau)=(\pi^n,\lambda^n,\tau^n)$, for any $n \in \N$, the triples in the orbit of $(\pi,\lambda,\tau)$ under $\tilde{\mathcal M}$. In particular, since this is an extension of Rauzy-Veech induction, $(\pi^n,\lambda^n)=T_n$.}

 One can also consider the \emph{normalized} extended Rauzy-Veech induction on $\tilde{\mathcal M}$ { as follows. Given $\lambda\in\R_+^\mathcal{A}$, let us denote by $|\lambda|:=\sum_{\alpha}\lambda_\alpha $. Then} 
 	\[
 	\tilde{\mathcal R}(\pi,\lambda,\tau):=\left(\pi^1,\frac{\lambda^1}{|\lambda^1|},|\lambda^1|\cdot\tau^1\right).
 	%\qquad {\text{where}\ |\lambda|:=\sum_{\alpha}\lambda_\alpha}.
 	\]
{The normalized extended Rauzy-Veech induction is invertible and extends the 
normalized Rauzy-Veech induction $\tilde{\mathcal{R}}:\mathcal{M}\to 
\mathcal{M}$.} The importance of this extension comes from a classical theorem 
by Masur \cite{masur_interval_1982} and Veech \cite{veech_gauss_1982}, which 
states that this transformation is ergodic with respect to a probability 
measure equivalent to a product of the counting measure on $S^{\mathcal A}$ and 
{the} Lebesgue measure on $\Lambda^{\mathcal A}$ and $\Theta^{\pi}$.

\subsection{From extensions to skew-products}\label{sec:reduction}{
In this section, we recall how to reduce the study of extensions of locally Hamiltonian flows to the study of skew-products over IETs given by cocycles with logarithmic singularities and deduce Theorem~\ref{thm:ergodic_extension} from Theorem~\ref{thm:reductiontoIETs}. First, let us recall some general facts about the reduction of extensions of smooth area-preserving flows to skew products. For a proof of these facts, see, for example, \cite{chaika_singularity_2021} or \cite{fraczek_ergodic_2012}.

\smallskip
\noindent \emph{Skew product representations.} Given a locally Hamiltonian flow $(\phi_t)_{t \in \R}$ { of class $C^2$} on a compact orientable surface $M$, let $\Sigma$ denote the set of fixed points of $(\phi_t)_{t \in \R}$. We assume that the cardinality $\kappa$ of $\Sigma$ is finite and that all points in $\Sigma$ are saddle points. As recalled in the introduction, one can find a translation structure on $M$ such that $(\phi_t)_{t \in \R}$ is a (singular) time-change of the vertical linear flow of this translation surface; the conical singularities of the flat metric are exactly at points of $\Sigma$. It is well known that if $\mathscr{S} \subseteq M \setminus \Sigma$ is a transverse section {with endpoints on separatrices}, choosing suitable coordinates (so that the horizontal flow has unit speed), the Poincaré section is an IET $T$ of $d=2g+\kappa-1$ intervals. Moreover, given a continuous observable $f:M \to \R$, $\mathscr{S} \times \R$ is a section for the extension $(\phi^f_t)_{t \in \R}$; the Poincaré first return map of $(\phi^f_t)_{t \in \R}$ to $\mathscr{S} \times \R$ is a skew-product of the form 
\begin{equation}
\label{eq:skew-product}
 \Function{T_\varphi }{I \times \R}{I \times \R}{(x, t)}{(T(x), t + \varphi (x))},
\end{equation}
where $T$ is the IET obtained above as as Poincaré section of $(\phi_t)_{t \in \R}$ to $\mathscr{S} $ and $\varphi :I \to \R$ is a { $C^2$} function associating to each $x \in I$ the integral of the observable $f$ along the orbit segment defined by $x$ and $T(x)$. More precisely, if $r(x)$ is the first return time of $x$ to $\mathscr{S}$ under $(\phi_t)_{t\in\R}$, so that $T(x) = \phi_{r(x)}(x)$, then 
\begin{equation}\label{eq:returntime} 
\varphi (x) = \int_0^{{r(x)}} f(\phi_s(x))\,ds.
\end{equation}
One can show that if the saddles in $\Sigma$ are all non-degenerate, then $\varphi $ is a cocycle with symmetric logarithmic singularities. Furthermore, if $f(p)\neq 0$ for at least one saddle $p\in \Sigma$, the singularities are non-trivial in the sense of Definition~\ref{def:logsing}, so that $\varphi$ belongs to the space $LSS^2(\bigsqcup_{\alpha \in \A} I_\alpha)$ in Definition~\ref{def:symlog}, where $(I_\alpha)_{\alpha \in \A} $ are the intervals exchanged by $T$.

\begin{figure}[h]
	\centering
	\includegraphics[scale=0.8]{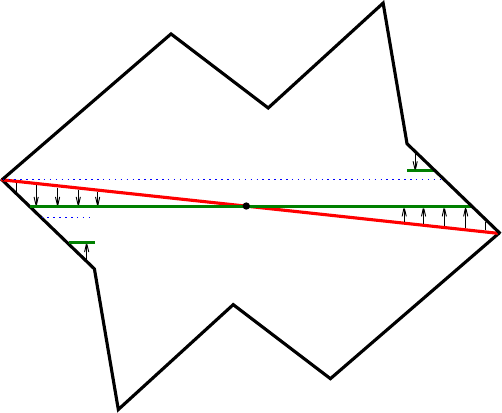}
	\captionsetup{width=0.6\linewidth}
	\caption{\small \label{fig:tilting} The diagonal can be tilted to obtain a 
	{horizontal} section without changing the first return map.}
\end{figure}

\smallskip
We are now ready to prove Theorem \ref{thm:ergodic_extension} from Theorem~\ref{thm:reductiontoIETs}. 
\begin{proof}[Proof of Theorem~\ref{thm:ergodic_extension}] 
Consider, as in the assumptions of Theorem~\ref{thm:ergodic_extension}, 
a surface 
$M$ of genus two and assume that $(\phi_t)_{t \in \R}$ is a locally Hamiltonian flow with two non-degenerate saddles. 
Let $\alpha:M\to\mathbb{R}$ be such that $(\phi_t)_{t \in \R}$ is generated by 
the vector field $\alpha X_v$, where $X_v$ is the { constant unit 
vertical vector field} on $M$ with { respect to the} given 
translation structure (see 
\S~\ref{sc:loc_ham}). 
Let $\mathcal{I}:M\to M$ be the underlying hyperelliptic involution (see \S~\ref{sc:loc_ham}) and assume that $f: M\to \R$ is, as in the assumptions of Theorem~\ref{thm:ergodic_extension}, such 
that $ \alpha f \circ \mathcal{I} = - \alpha f$. 
{ Let us now show that we can find a Poincaré section ${\mathscr{S}}$ which is fixed by the involution, i.e., such that $\mathcal{I}(\mathscr{S})=\mathscr{S}$, so that the Poincaré map to this section is a symmetric IET.} { By the Kontsevich-Zorich classification of connected components 
of the moduli space of translation structures (see Theorem 2 in \cite{kontsevich_connected_2003}), the translation surface $M$ can be represented as $(\pi,\lambda,\tau)\in \{\pi\}\times \R^{\mathcal A}_{>0}\times \Theta^\pi$, where $\pi$ is a symmetric permutation on an alphabet $\mathcal A$ of 5 elements {(here $\tau\in \Theta^\pi$ is a suspension datum, see \eqref{eq:defTheta} for the definition of $\Theta^\pi$). In the polygonal representation given by $(\pi,\lambda,\tau)$ (see Section~\ref{sec:RV}), consider 	as a Poincaré} section the diagonal ${\mathscr{S}}_0$ %of the polygon given by $(\pi,\lambda,\tau)$ (described in Section~\ref{sec:RV}), 
 connecting the points $(0,0)$ and $\big(\sum_{\alpha\in\A}\la_{\alpha}, \sum_{\alpha\in\A}\tau_{\alpha}\big)$ {(red in figure~\ref{fig:tilting})}. One can show that the diagonal is contained in the polygon (if not, one could use the involution $\mathcal{I}$ to contradict the definition of suspension datum $\mathcal{\tau}$, see \eqref{eq:defTheta}). Note also that the mid-point of the diagonal $\mathscr{S}_0$, 
%is fixed by $\mathcal{I}$ and its center, 
namely the point $\big(\frac{1}{2}\sum_{\alpha\in\A}\la_{\alpha}, 	\frac{1}{2}\sum_{\alpha\in\A}\tau_{\alpha}\big)$, is the center of the polygon and one of the points fixed by the involution $\mathcal I$.}	

The Poincaré first return map to ${\mathscr{S}}_0$, which is given by the identification of the polygon sides (see Figure~\ref{fig:tilting}) is by construction\footnote{%Alternatively, o
The diagonal $\mathscr{S}_0$ can be tilted by sliding it along the leaves of the vertical foliation, using an argument analogous to those explained { above while leaving $(0, 0)$ fixed}, % to go from $\mathscr{S}_0$ to $\mathscr{S}$, 
to see that the Poincaré map to the diagonal $\mathscr{S}_1$ is the same than the Poincaré map to the section given by the horizontal segment $[0,\sum_{\alpha\in\A}\la_{\alpha}) {\times \{0\}} $ (the horizontal segment dashed in figure~\ref{fig:tilting}), which, by definition of zippered rectangles, is the IET { given by $(\pi, \lambda)$}.} the symmetric IET with length $\lambda$ and combinatorial datum $\pi$. We now claim that we can \emph{tilt} ${\mathscr{S}}_0$ (keeping the center fixed) to obtain the desired section $\mathscr{S}$, which will be a horizontal segment passing through the center and symmetric with respect to it (hence in particular fixed by the involution $\mathcal{I}$). Notice that tilting the section {by \emph{sliding} it} along the leaves of 
	the foliation associated with the vertical translation { flow (so that the endpoints move along leaves)} does not change the 
	first return transformation via $(v_t)_{t\in\R}$ { as long as no singularity is hit. We claim that this is the case if we rotate the diagonal around the center, sliding its endpoints along leaves of the vertical flow, until it becomes a horizontal segment (the segment $\mathscr{S}$ drawn in green in Figure~\ref{fig:tilting}). To see this, assume without loss of generality that $\sum_{\alpha\in\A}\tau_\alpha<0$ (as in figure~\ref{fig:tilting}), the other case being similar. If a singularity were hit, applying the involution, we would find that $\tau$ does not satisfy the inequalities in the definition of $ \Theta^\pi$, see \eqref{eq:defTheta}. } 
	Then $\mathcal{I}(\mathscr{S})= \mathscr{S}$ and the Poincaré first return map on $\mathscr{S}$ is still the same symmetric IET $T$ of 5 intervals with combinatorics $\pi$. In particular, in view of 	\eqref{eq:characterization_symmetric}, it satisfies $T(x) = T^{-1}(\mathcal{I}(x))$.
 
 Let us now show that the induced cocycle $\varphi $ given by \eqref{eq:returntime} is odd with respect to the involution, i.e., it satisfies \eqref{eq:oddly_symmetric} and hence, equivalently (by Lemma~\ref{lemma:involutionodd}), is odd on each interval $I_\alpha,$ $\alpha\in \mathcal{A}$, in the sense of Definition~\ref{def:odd}.

 \begin{figure}[h]
 	\centering
 	\includegraphics[scale=0.5]{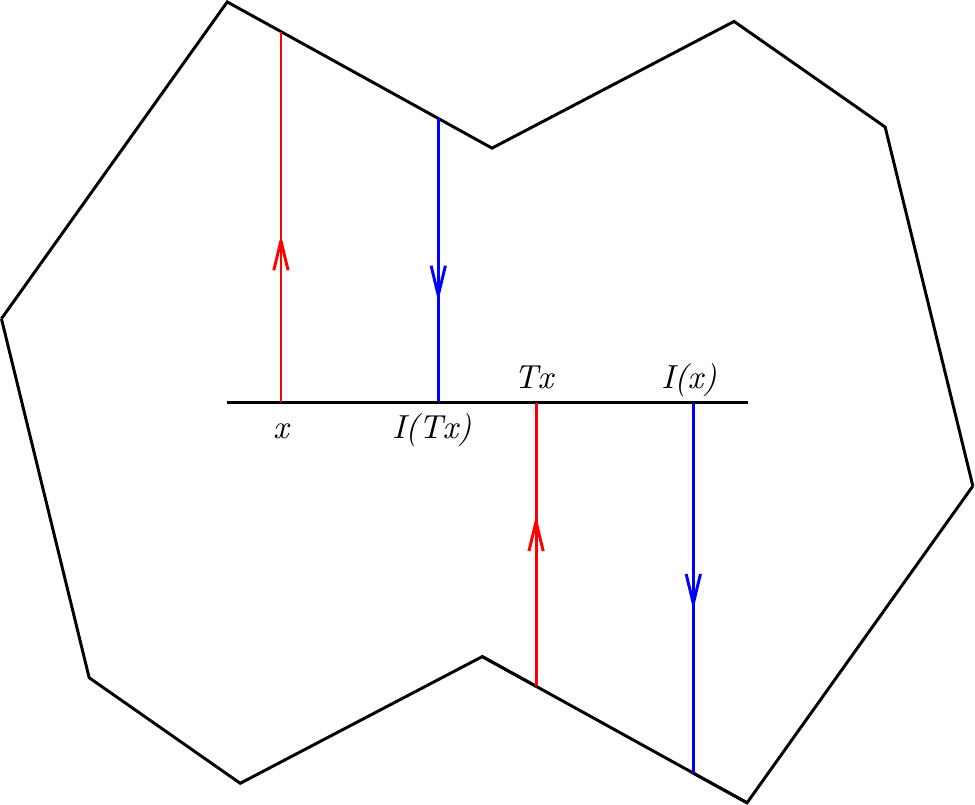}
 	\captionsetup{width=0.6\linewidth}
 	\caption{\small \label{fig:involution} The backward orbit of $\mathcal 
 	I(x)$ is 
 		the image of the forward orbit of $x$ via $\mathcal I$ for a symmetric 
 		Poincar\'e section.}
 \end{figure}
 
Recall that $(\phi_t)_{t \in \R}$ is a time change of the vertical flow $(v_t)_{t\in \R}$ generated by $X_v$, {which} means that
$$
\phi_{ \tau(x,t)} (x) = v_{ t}(x) \qquad \forall x \in M,\ t\in \R,
$$
where $\tau: M\times \R \to M$ is a cocycle over $(v_t)_{t\in \R}$, i.e., it satisfies,
\begin{equation}\label{eq:cocycle}
\tau(x, t_0+t)= \tau(x, t_0)+ \tau (v_{t_0}(x),t), \qquad \forall x \in M, \quad \forall t, t_0 \in \R, \end{equation}
and the infinitesimal generator $\alpha: X \to \mathbb{R}_{\geq 0}$ is given by
\begin{equation}
\label{eq:def_alpha}
\alpha (x):=\frac{\partial \tau}{\partial t}\bigg| _{t=0} \tau (x,t), \qquad \forall\ x\in M.
\end{equation}
From \eqref{eq:cocycle} and \eqref{eq:def_alpha}, it follows that for all $ t_0 \in \R $ and $x\in M$, 
\begin{align}
\frac{\partial \tau}{\partial t}\bigg| _{t=t_0} \tau (x,t)& = \frac{\partial \tau}{\partial t}\bigg| _{t=t_0} \left( \tau(x, t_0)+ \tau (v_{t_0}(x),t-t_0)\right) \nonumber \\ & = \frac{\partial \tau}{\partial s} \bigg| _{t=0} \tau (v_{t_0}(x),t) =\alpha (\phi_{\tau (x,{t_0})}). \label{eq:derivative_tau}
\end{align}
Given $x \in \mathscr{S}$ such that its first return $r(x)$ to $\mathscr{S}$ 
by the flow $(\phi_t)_{t \in \R}$ is well-defined, let $\rho(x)$ be the 
unique real number such that $r(x) = \tau(x, \rho(x))$. { Recall} 
that, in this 
case, the first returns to $\mathscr{S}$ by the flows $(\phi_t)_{t \in \R}$ 
and 
$(v_t)_{t \in \R}$ coincide, and we have
\[ \phi_{r(x)}(x) = T(x) = v_{\rho(x)}(x). \]
 By \eqref{eq:vertical_involution} applied to the point $T(x)$, 
{as represented in Figure~\ref{fig:involution},}
\begin{equation*}
\label{eq:involution_relation}
v_t( \mathcal I \circ{ T}(x)) = \mathcal I (v_{-t} \circ T(x)),
\end{equation*}
for any $t \in \R$ for which the flows above are well-defined. 
{Geometrically, as shown in Figure~\ref{fig:involution}, this shows 
that the image by the involution of the backward trajectory from $T(x)$ 
coincides with the forward trajectory from $\mathcal{I}(x)$. }
In particular, since $T(x) = v_{\rho(x)}(x)$ is the first return of $x$ to 
$\mathscr{S}$, it follows %from \eqref{eq:involution_relation}
 that the first return of $\mathcal I \circ T(x)$ to $\mathscr{S}$ is given by 
$\mathcal I (x)$. More precisely, 
 \begin{equation}
 \label{eq:return_involution}
 \rho(\mathcal I \circ T(x)) = \rho(x), \qquad T( \mathcal I \circ T(x)) = 
v_{\rho(x)} ( \mathcal I \circ T(x)) = \mathcal I (x).
 \end{equation}
%In particular, this shows that $T \circ \mathcal I = \mathcal I \circ T^{-1}$, 
%and thus $T$ is a symmetric IET.
For any $x \in \mathscr{S}$ with finite return time $r(x)$ to $\mathscr{S}$,
\[
\varphi (x) = \int_0^{r(x)} f(\phi_s(x))\,ds = \int_0^{\tau(x, \rho(x))} f(\phi_s(x))\,ds.
\]
Applying the substitution $s = \tau(x, t)$ to the RHS of the previous equation and by \eqref{eq:derivative_tau}, 
\begin{equation}
\label{eq:roof_vertical_flow}
\varphi (x) = \int_0^{\rho(x)} \alpha f(\phi_{\tau(x, t)}(x))\,dt = \int_0^{\rho(x)} \alpha f(v_t(x))\,dt .
\end{equation}
By \eqref{eq:return_involution}, and by \eqref{eq:roof_vertical_flow} applied to $\mathcal I \circ T(x)$, 
\begin{align*}
\varphi (\mathcal I \circ T(x)) & = \int_0^{\rho(x)} \alpha f(v_t \circ \mathcal I \circ T(x))\,dt = \int_0^{\rho(x)} \alpha f(v_{\rho(x) - t} \circ \mathcal I \circ T(x))\,dt \\ & = \int_0^{\rho(x)} \alpha f(v_{- t} \circ v_{- \rho(x)} \circ \mathcal I \circ T(x))\,dt = \int_0^{\rho(x)} \alpha f(v_{-t} \circ \mathcal I (x))\,dt \\ & = \int_0^{\rho(x)} \alpha f(\mathcal I \circ v_t (x))\,dt = - \int_0^{\rho(x)} \alpha f(v_t (x))\,dt \\ & = - \varphi (x).
\end{align*}
Therefore, $\varphi $ verifies \eqref{eq:oddly_symmetric}.

Thus, we proved that the skew-product $T_\varphi $ that we obtain in this setting satisfies the assumptions of Theorem~\ref{thm:reductiontoIETs} and hence it is ergodic if $T$ belongs to a full measure set of IETs. Since the extension $(\phi^f_t)_{t \in \R}$ is ergodic if (and only if) its Poincaré map $T_\varphi $ is ergodic, Theorem~\ref{thm:ergodic_extension} now follows from the fact that the set of locally Hamiltonian flows whose Poincaré map on a given section $\mathscr{S}$ {(when seen as translation surfaces as described above)} belongs to a full measure set { of IETs} has full measure with respect to the measure class given by the Katok fundamental class. 
% full measure set of IETs in the special flow representation of $(\phi^f_t)_{t \in \R}$
\end{proof}

\begin{remark}\label{rk:odd_observables}
Let $\mathcal{F}_{odd}\subseteq \mathcal{C}^2(M)$ be the class of observables $f:M\to \mathbb{R}$ such that there exists a section $\mathscr{S}\subseteq M \setminus \Sigma$ fixed by the underlying involution $\mathcal{I}$ of $M$, i.e., such that $\mathcal{I}(\mathscr{S})=\mathscr{S}$, with the property that, if ${r(x)}$ denotes the first return time of $x$ to $\mathscr{S}$, % w.r.t. $ (\phi^f_t)_{t \in \R}$ or $(\phi^f_{-t})_{t \in \R}$ respectively, 
$$ \int_0^{r(x)} f(\phi_s(x))\,ds = %\int_0^{\tau^-(x)} f(\phi_{-s}(x))\,ds =
- \int_0^{r(\mathcal I \circ T (x))} f(\phi_s(\mathcal{I}(T(x)))\,ds, $$
 that is, the cocycle $\varphi $ given by \eqref{eq:returntime} is odd with respect to the involution. % i.e.~$\varphi _f(\mathcal{I}=-\varphi _f$.
Thus, the reduction shows, more generally, that if $f\in \mathcal{F}_{odd}$ then Theorem~\ref{thm:ergodic_extension} implies that the extension $(\phi^f_t)_{t \in \R}$ is ergodic. While we proved that any $f$ which satisfies the assumption \eqref{eq:odd_observable} of Theorem~\ref{thm:ergodic_extension} belongs to $\mathcal{F}_{odd}$, the class $\mathcal{F}_{odd}$ contain many more functions than those which satisfy \eqref{eq:odd_observable}. 
%in $\mathcal{F}_{odd}$ t%he function $\alpha f$, where $\alpha$ generates the time-change which gives $(\phi_t)_{t \in \R}$, is \emph{odd} with respect to the underlying hyperelliptic involution $\mathcal{I}$ on $M$ i.e.
\end{remark}}

\section{Construction of the good rigidity sets}

In this section, we construct the rigidity sets $\Xi_n$ that we will use to verify the assumptions of the ergodicity criterion (Proposition~\ref{prop:ergodicity_criterion}) and hence prove the main result, Theorem~\ref{thm:reductiontoIETs}.

\subsection{Construction of good base dynamics}\label{sec:goodbase}

Let us start by showing how to construct, for any IET $T$ with a sufficiently small \emph{$\delta$-drift} (see Definition \ref{def:drift}) and sufficiently balanced lengths, subintervals with nice rigidity properties, whose distance to the discontinuities of $T$ is explicitly controlled. 

In the following, if $\pi$ is a symmetric permutation on $\A$, { let $\alpha\mapsto \overline{\alpha}$ be the involution of the alphabet, which inverts the order of the letters from left to right in the interval, namely for any $\alpha\in \mathcal{\alpha}$, since $\pi$ is a symmetric combinatorial datum, we define $\overline{\alpha}\in\mathcal{A}$ 
to be} %image of $\alpha$ under $\pi_0^{-1}\circ \pi$, i.e. 
$$\overline{\alpha} := \pi_{0}^{-1}(\pi_1(\alpha)) = \pi_{0}^{-1}(d + 1 - \pi_0(\alpha)) ,\qquad \text{for each} \, \alpha \in \A.$$ 
Remark that if all subintervals had equal length, i.e., $\lambda_\alpha= 1/d$ for all $\alpha\in \A$ (which is not compatible with the Keane condition) then, because of the symmetry assumption on the permutation, $T^2$ would be the identity and the intervals $I_\alpha$ would be mapped to $I_{\overline{\alpha}}$ by $T$, and vice versa. 

The IETs which we consider to build good dynamics (and that later we will use as base dynamics for Rohlin towers given by renormalization) {are} just a perturbation of this order two periodic behavior, chosen so that the iterates of a subinterval of $F_\alpha\subseteq I_\alpha$ when moving to $I_{\overline{\alpha}}$ and then back in $I_\alpha$, \emph{drift} to the right by a proportion $\delta >0$ of $|I|$. { To define the class of IETs more precisely, let us give two definitions.
%Let us define the IETs which allow such behavior.

\begin{definition}[Balanced lengths]\label{def:balance}
Given an IET $T = (\pi, \lambda)$ and $\nu > 0$, we say that $T$ is \emph{$\nu$-balanced} if $\lambda$ satisfies
\begin{equation}
\label{eq:balanced_vector}
\max_{\alpha, \beta \in \A} \frac{\lambda_\alpha}{\lambda_\beta} < \nu.
\end{equation} 
\end{definition}
\noindent In particular, $\nu$-balance implies that 
\begin{equation}\label{eq:balancedintervals} \frac{{ |I|}}{\nu d} \leq \lambda_\beta \leq \frac{\nu { |I|}}{d}, \quad \text{for all } \beta \in \A.
\end{equation}
The second definition characterizes the parameters which give the drifting phenomenon. }
\begin{definition}[$\delta$-drift]\label{def:drift}
Given an IET $T = (\pi, \lambda)$ with a symmetric permutation, $\delta > 0$ and $\alpha = \pi_0^{-1}(k)$, for some $1 < k \leq d,$ we say that $T$ has a \emph{$\delta$-drift at $\alpha$} if
\begin{equation}
\label{eq:delta_drift}
\delta |I| + u_{k - 1}^t \leq u_{k - 1}^b, \quad\quad \delta \min\{1, d - k \}|I| + u_{k}^t \leq u_{k}^b.
\end{equation} 
\end{definition}
\noindent {
%Since $I_\alpha= I^t_k=[u^t_{k-1}, u^t_k]$ is mapped by $T$ to $ I^b_k=[u^b_{k-1}, u^b_k]$ via a translation of $u^{b}_{k-1}-u^t_{k-1}$, the first condition of \eqref{eq:delta_drift} guarantees that $I_\alpha$ is shifted to the left by at least $\delta |I|$ 
\begin{remark}\label{rk:notempty}
One can see, by considering small perturbations of the (non-Keane) IET with all intervals of equal length $\tfrac{1}{d}$, that the set of IETs with $\delta$-drift is not empty if $\delta$ is sufficiently small, for example, if {$\delta< \tfrac{1}{2d}$.}
\end{remark}
%for IETs with $\delta$-drift to exist, it is necessary that $\delta<1/d$. 
When $T$ has sufficiently balanced lengths and a sufficiently small} $\delta$-drift at $\alpha$, we can then produce a subinterval $F_\alpha\subseteq I_\alpha$, with the good dynamics described in the following lemma (which will then be helpful for us to control rigidity and visits close to singularities): {the successive iterates $T^{i}(F_{\alpha})$, for $0\leq i\leq 5$ belong alternatively to $I_\alpha$ or $I_{\overline{\alpha}}$ (Condition \ref{cond:alternating_intervals} of Lemma~\ref{lem:rigid_interval}){. Also, $F_{\alpha}$ starts $\eta$-close to a singularity but, for $1 \leq i \leq 5$ if $\alpha \neq \pi_0^{-1}(d)$ or $2 \leq i \leq 5$ if $\alpha = \pi_0^{-1}(d)$, the iterates $T^i(F_\alpha)$} are all at least $\delta$-far from {any singularity} (as given by Conditions \ref{cond:not_last_interval} { or \ref{cond:last_interval}} of Lemma~\ref{lem:rigid_interval}).} {This is illustrated below in Figure~\ref{fig:goodbase}.}

\smallskip

	\begin{figure}[h]
	\centering
 	\captionsetup{width=0.75\linewidth}
		\includegraphics[scale=.9]{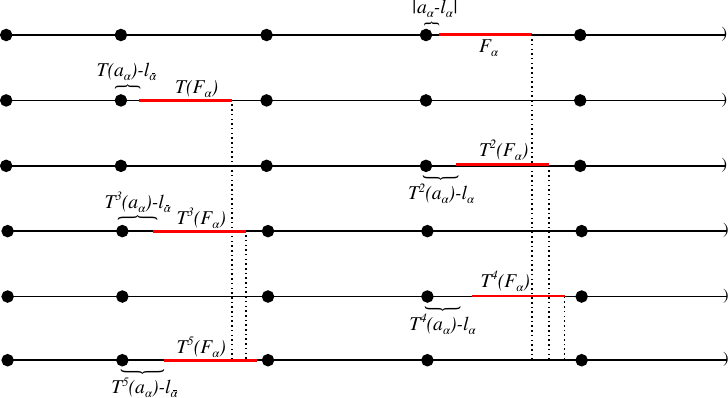}
	\caption{{\small { Pictorial representation of} good dynamics on the base in the case $\alpha \neq \pi_0^{-1}(d)$: { Rows show (in red) the location of the first six successive iterates of the subinterval $F_\alpha { = [a_\alpha, b_\alpha]}$ described in Lemma~\ref{lem:rigid_interval} (namely of $F_\alpha$ in the first row, $T(F_\alpha)$ in the second, $T^2(F_\alpha)$ in the third and so on) with respect to the discontinuities of $T$}. The interval $F_{\alpha}$ starts $\eta$-close to a singularity, {{ i.e., $|a_\alpha-l_\alpha|=\eta|I_\alpha|$}}, but its consecutive iterations are at least $\delta$-far from { any singularity} { (in particular 	$|T^i(a_\alpha)-l_\alpha|>\delta|I_\alpha|$)}. \label{fig:goodbase}}}
\end{figure}

\smallskip

\begin{lemma}[Good base dynamics from $\delta$-drift]
\label{lem:rigid_interval}
Given $1< \nu< 1 + \tfrac{1}{100d}$ and $0<\delta< \tfrac{1}{10d}$, if $T = (\pi, \lambda)$ is a symmetric IET with $\nu$-balanced lengths, and having a $\delta$-drift at $\alpha = \pi_0^{-1}(k)$, for some $1 < k \leq d$ then, for any $0 < \eta < \tfrac{1}{4}$, the interval 
\[F= F_\alpha = [a_\alpha, b
_\alpha] := [l_\alpha + \eta \lambda_\alpha, l_\alpha + \tfrac{3}{4}\lambda_\alpha] \subseteq I_\alpha,\] verifies the following:
\begin{enumerate}
 %\item $\frac{|F_\alpha|}{|I_\alpha|} > \frac{1}{2}.$
 \item \label{cond:alternating_intervals} $F, T^2(F), T^4(F) \subseteq I_\alpha$ and $T(F), T^3(F), T^5(F) \subseteq I_{\overline{\alpha}}.$
 \item \label{cond:not_last_interval} If $k \neq d,$ then $|a_\alpha - l_\alpha| = \eta|I_\alpha|$ and 

 $$|T^i(a_\alpha) - l_\beta | \geq \delta,\qquad {\text{for\ any\ } 0\leq i\leq \beta \ \text{and}\ \beta\in \mathcal{A}, \ \text{such\ that}\ (i,\beta)\neq (0,\alpha);}$$
%\beta) \in (\{0, \dots, 5\} \times \A)\,\setminus\, \{ (0, \alpha)\}.$$
 \item \label{cond:last_interval} If $k = d$, then $|a_\alpha - l_\alpha| = \eta|I_\alpha| = |T(a_\alpha) - l_{\overline{\alpha}}| $ and
 $$|T^i(a_\alpha) - l_\beta | \geq \delta, \qquad {
 \text{for\ any\ } 0\leq i\leq \beta \ \text{and}\ \beta\in \mathcal{A}, \ \text{such\ that}\
%%\text{for\ any}\ (i, \beta) \in (\{0, \dots, 5\} \times \A) \,\setminus\, 
%%\{(0, \alpha), (1, \overline{\alpha})\}.$$
(i,\beta)\neq (0, \alpha)\ \text{or}\ (1, \overline{\alpha}).}$$
\end{enumerate}
\end{lemma} 
 % \item $|T^i(a_\alpha) - \partial I_\alpha | = \eta |I_\alpha|$ for $i_0 \in \{0, 1, 2\}.$
 %\item $|T^i(a_\alpha) - \partial I_{\beta}| \geq \delta,$ for any $(i, \beta) \in \{0, 1, 2\} \times \A \,\setminus\, (i_0, \alpha).$

\begin{remark} { The reason for the different treatment of the case $k=d$ is that when $\alpha= \pi_0^{-1}(d)$, i.e., when $I_\alpha$ is the last exchanged interval, $T(I_\alpha)$ is the first interval, so $T(l_\alpha)= l_{\overline{\alpha}}=u_{0}^t=0$ and $l_\alpha = u^t_{d - 1} = T^{-1}(0)$. Thus, in this case, the first image of the interval cannot have any \emph{drift} and has to be excluded.} 
\end{remark}

\begin{proof}
For the sake of simplicity { of notation, let us { assume that $|I|=1$ and} denote by} 
$$\delta(k) := \delta \min\{1, d - k \}.$$ 
By the symmetry of the permutation (see \eqref{eq:discontinuities_symmetric} and \eqref{eq:image_discontinuities_symmetric}), we have
\begin{align*}
 T(l_\alpha) - l_{\overline\alpha} &= u_{d - k}^b - u_{d - k}^t = u_k^b - u_k^t,
\\ T(l_{\overline\alpha}) - l_{\alpha} &= u_{k - 1}^b - u_{k - 1}^t.\end{align*}
%{ Let $\nu_0 = 1 + \tfrac{1}{10d}$}
Recalling that $\lambda$ being $\nu$-balanced implies \eqref{eq:balancedintervals}, we have 
\begin{equation}
\label{eq:difference_lengths}
\max_{\beta, \beta' \in \A} |\lambda_\beta - \lambda_{\beta'}| < \frac{\nu - \nu^{-1}}{d}.
\end{equation}
Thus, by {the $\delta$-drift assumption} \eqref{eq:delta_drift},
\begin{equation}
\label{eq:drift_explicit}
\begin{aligned} \delta(k)\leq T(l_\alpha) - l_{\overline\alpha} < \nu - \nu^{-1}, \\
\delta \leq T(l_{\overline\alpha}) - l_{\alpha} < \nu - \nu^{-1}.
\end{aligned}
\end{equation}
Then, since 
\begin{equation}
\label{eq:smallness_nu}
\nu - \nu^{-1} < \frac{1}{50d} < \frac{1}{40} \min_{\beta \in \A} \lambda_\beta,
\end{equation}
equations \eqref{eq:difference_lengths} and \eqref{eq:drift_explicit} imply
\[ T^{}(F) \subseteq [l_{\overline\alpha} + \delta(k) + \eta \lambda_\alpha, l_{\overline\alpha} + \tfrac{3}{4}\lambda_\alpha + \nu - \nu^{-1}] \subseteq I_{\overline\alpha},\]
\[ T^{2}(F) \subseteq [l_{\alpha} + \delta + \eta \lambda_\alpha, l_{\alpha} + \tfrac{3}{4}\lambda_\alpha + 2(\nu - \nu^{-1})] \subseteq I_{\alpha},\]
and
\[ T^3(F) \subseteq [l_{\overline\alpha} + \delta + \eta \lambda_\alpha, l_{\overline\alpha} + \tfrac{3}{4}\lambda_\alpha + 3(\nu - \nu^{-1})] \subseteq I_{\overline\alpha} ,\]
\[ T^4(F) \subseteq [l_{\alpha} + \delta + \eta \lambda_\alpha, l_{\alpha} + \tfrac{3}{4}\lambda_\alpha + 4(\nu - \nu^{-1})] \subseteq I_{\alpha},\]
 \[T^5(F) \subseteq [l_{\overline\alpha} + \delta + \eta \lambda_\alpha, l_{\overline\alpha} + \tfrac{3}{4}\lambda_\alpha + 5(\nu - \nu^{-1})] \subseteq I_{\overline\alpha}.\]
This proves the first assertion. 

Notice that by \eqref{eq:difference_lengths}, \eqref{eq:smallness_nu} and since $0 < \delta < \tfrac{1}{10d}$, the previous equations {also imply
 that} \begin{equation}
\label{eq:image_F}
T(F) \subseteq [l_{\overline\alpha} + \delta(k), r_{\overline\alpha} - \delta],
\end{equation}
and that
\begin{equation}
\label{eq:iterates_F}
\begin{aligned}
T^2(F), T^4(F) \subseteq [l_{\alpha} + \delta, r_{\alpha} - \delta], \\ T^3(F), T^5(F) \subseteq [l_{\overline\alpha} + \delta, r_{\overline\alpha} - \delta].
\end{aligned}
\end{equation}
Notice that by definition of $F$, we have $|a_\alpha - l_\alpha| = \eta |I_\alpha|$. We can now prove the last two assertions:

\begin{enumerate}
\item 
If $k \neq d$, it follows from \eqref{eq:image_F} that $T(F) \subseteq [l_{\overline\alpha} + \delta, r_{\overline\alpha} - \delta]$. This, together with \eqref{eq:iterates_F}, prove the second assertion. 

\item
If $k = d$, then $T(l_\alpha) = 0 = l_{\overline\alpha}$ and thus $|T(a_\alpha) - l_{\overline\alpha}| = \eta|I_\alpha|$. By \eqref{eq:iterates_F}, the third assertion follows. 
\end{enumerate}
\end{proof}

\subsection{Good renormalization times}
We now define, in \S~\ref{sec:goodtimesdef}, the renormalization times that will help us produce good base dynamics and good towers above it (see Definition~\ref{def:goodtimes}). We then state, in \S~\ref{sec:Egoodtimes}, a result about their existence for a.e.~IET (see Proposition~\ref{prop:full_measure_cond}). 

\subsubsection{Discontinuities and derivative control}
\label{sec:propgoodtimesdef}
Let us introduce a few definitions used in describing good renormalization times. 
It is rather classical to consider times when the Rauzy-Veech induction is \emph{balanced} in the following sense.
%exposition and the construction of the sets $\Xi_n$. 
%\begin{definition}[$\nu$-balanced vector]
%Given $\omega \in \R_+^\A$ and $\nu > 1$, we say that $\omega$ is \emph{$\nu$-balanced} if 
%\end{definition}
\begin{definition}[Balanced times]\label{def:balancedtime}
Given an IET $T = (\pi, \lambda)$ and $\nu > 0$, we say that $n \geq 0$ is a \emph{$\nu$-balanced time for $T$} if $\lambda^n$ and $q^n$ are $\nu$-balanced, i.e., 
\begin{equation}
\label{eq:balanced_vectors}
\max_{\alpha, \beta \in \A} \frac{\lambda^n_\alpha}{\lambda^n_\beta} < \nu, \qquad \max_{\alpha, \beta \in \A} \frac{q^n_\alpha}{q^n_\beta} < \nu.
\end{equation} 
\end{definition}
\noindent Thus, both the lenght vector $\lambda^n$ and the height vector $q^n$ are $\nu$-balanced in the sense of Definition \ref{def:balance}. 

\smallskip
The next condition guarantees that the location of the discontinuities in Rohlin towers is controlled. 
\begin{definition}[Well-positioned discontinuities]\label{def:well}
Given an IET $T = (\pi, \lambda)$ with a symmetric permutation, we say that $T$ has \emph{well-positioned singularities at time $n$} if $\pi^n = \pi$ and, { for any $\alpha \in \A$ with $1 < \pi_0(\alpha) < d$, there exists an integer $i_\alpha^n$ such that}
\begin{equation}
\label{eq:pos_discontinuities}
\tfrac{q^n_\alpha}{4} < i_\alpha^n < q^n_\alpha \quad \text{and } \quad l_\alpha = T^{i^n_\alpha}(l_\alpha^n). 
\end{equation}
 %where $l_\alpha,$ $l_\alpha^n$ denote the left endpoints of $I_\alpha(T),$ $I_\alpha(\mathcal{RV}^n(\pi, \lambda))$, respectively.
\end{definition}
Combining balanced times with $\delta$-drift for the induced IET (see 
Definition~\ref{def:drift}), we will be able to obtain good rigidity properties 
for the Rohlin towers associated with the induction procedure (see Lemma 
\ref{lem:towers_approaching_left_singularities}). The last ingredient crucial 
for tightness is {the existence of times, which we call \emph{good derivative 
control times}, where the following control of the Birkhoff sums 
of the first derivative (which is not an integrable function) holds (whose 
existence was proved in the work of the last author, see 
\cite{ulcigrai_absence_2011}).}

\begin{definition}[Good {derivative control} times]\label{def:cancelltimes}
Given an IET $T: I \to I$, a cocycle $\varphi \in LSS^{2}(\bigsqcup_{\alpha\in\mathcal A} I_\alpha)$ of the form \eqref{eq:logartihmic_singularities}, and $M > 0$, we say that $n \geq 0$ is a \emph{$M$-good derivative control time} { for $\varphi$} if, for any {$z \in I^n$} and any $0 \leq r {<} q_\alpha^n$,
\[\left|S_r {\varphi'}(z)-\left(\sum_{\alpha \in \A}\frac{C_\alpha^+}{z_{\alpha}^+}+\frac{C_\alpha^-}{z_{\alpha}^-}\right)\right| \leq Mr,\]
where $z_\alpha^+$ and $z_\alpha^-$ are the closest visits of $z$ to the 
{endpoints $l_\alpha$ and $r_\alpha$ respectively,} {among the first $r$ iterates $\{T^i z, 0\leq i<r \}$ of $z$ under $T$}. 
\end{definition}
{ We remark that since ${\varphi'}$ has singularities of type $1/x$, which are not integrable, the conclusion of Birkhoff ergodic theorem does not apply to ${\varphi'}$ and the existence of good derivative control times as the above can only be proved when the logarithmic singularities are \emph{symmetric} and under a (full measure) delicate Diophantine-like condition for the IET (see \cite{ulcigrai_absence_2011} or the survey \cite{Ul:ICM} for further details), while it typically fails when the singularities are asymmetric (see \cite{Ulcigrai_mixing_2007, Rav:mix}). }

\subsubsection{Definition of good renormalization times}\label{sec:goodtimesdef}
{For a.e. IET $T$ with a symmetric permutation,} we will show (see Proposition~\ref{prop:full_measure_cond}) that all of the properties above can be realized simultaneously along a subsequence of its {iterates under Rauzy-Veech induction, which we call \emph{good renormalization times}:}%Recall that $\delta$-drift was defined in ~\ref{sec:goodbase}, see Definition~\ref{def:drift}. Let us first define:

\begin{definition}[Good renormalization times]\label{def:goodtimes}
{Given an IET $T: I \to I$ with continuity intervals ${(I_\alpha)}_{\alpha\in\mathcal A}$ satisfying the Keane condition, 
and a cocycle $\varphi \in LSS^{2}(\bigsqcup_{\alpha\in\mathcal A} I_\alpha)$,} 
we say that a sequence $(m_n)_{n\in\N}$ is a sequence of {$(\nu, \delta, 
M)$-\emph{good renormalization times} {for $\varphi$ at $\alpha\in 
\mathcal{A}$}, where $\nu>1, \delta>0, M>0$,} if the following holds for any $n 
\in \N$:
\begin{enumerate}
\item \label{cond:balanced_time} $m_n$ is a $\nu$-balanced time for $T$ {(in the sense of Definition~\ref{def:balance})};
\item $T$ has well-positioned discontinuities at time $m_n$ {(in the sense of Definition~\ref{def:well})};
\item $\mathcal{RV}^{m_n}(T)$ has a $\delta$-drift at $\alpha$ {(in the sense of Definition~\ref{def:drift})};
\item \label{cond:good_derivative} $m_n$ is a {$M$-good} derivative 
control { time} {for $\varphi$} {(in the sense 
of Definition~\ref{def:cancelltimes})}.
\end{enumerate}
{ We say that a sequence $(m_n)_{n\in\N}$ is a sequence of \emph{good 
renormalization times} (for $\varphi$ at $\alpha$) if it is a $(\nu, \delta, 
M)$-\emph{good renormalization times} (for $\varphi$ at $\alpha$) for some 
$\nu>1, \delta>0$ and $M>0$.}
\end{definition}

{
In \S~\ref{sc:proof_full_measure_cond}, we will prove that good renormalization times exist for almost every IET with symmetric permutation (see Proposition~\ref{prop:full_measure_cond}). 
We now show how to use good renormalization times to define the rigidity sets $\Xi_n$, which will satisfy the conditions of Proposition \ref{prop:ergodicity_criterion} {(see \S~\ref{sec:goodtimes}).} 

\subsection{Good towers dynamics}}
{Given a Keane IET $T$ with symmetric permutation which admits a sequence of good renormalization times $(m_n)_{n\in\N}$, for every $n\in\N$, applying Lemma \ref{lem:rigid_interval} to each IET $\mathcal{RV}^{m_n}(T)$,} we can obtain, {in each continuity interval of $\mathcal{RV}^{m_n}(T)$, a subinterval with good base dynamics such that its iterates under $T$ are contained in a Rohlin tower and have explicitly controlled distance from the endpoints of $T$}. More precisely, we have the following:

\smallskip
\begin{lemma}[Good tower dynamics]
\label{lem:towers_approaching_left_singularities}
Suppose { $1< \nu< 1 + \tfrac{1}{100d}$ and $0<\delta< \tfrac{1}{10d}$ as in Lemma \ref{lem:rigid_interval}.}
 Let $T = (\pi, \lambda)$ be an IET with a symmetric permutation and let {$(m_j)_{j \in \N}\subseteq \N$ be a time of a sequence
of $(\nu,\delta, M)$-good renormalization times for $\varphi \in 
LSS^{2}(\bigsqcup_{\alpha\in\mathcal A} I_\alpha)$ 
%of the form \eqref{eq:logartihmic_singularities} 
at some $\alpha = \pi_0^{-1}(k)$, with $1 < k \leq d$.} 
 Then for any {$n\in (m_j)_{j \in \N}\subseteq \N$ }
 %$(m_j)_{j \in \N}\subseteq \N$} of times given by Proposition \ref{prop:full_measure_cond} for some , and some
and any $0 < \eta < \frac{1}{4}$, %there exists a subinterval ${F} \subseteq I^n_\alpha,$ satisfying the following:
the interval 
\begin{equation}\label{def:F}
{F}= F(\eta,\alpha): = [a_\alpha^n, b_\alpha^n] = [l_\alpha^n + \eta |I^n_\alpha|, r_\alpha^n - \tfrac{1}{4{\nu}}|I^n_\alpha|],\end{equation} satisfies the following:

\begin{enumerate}
 \item \label{cond:big_int} $\frac{|{F}|}{|I^n_\alpha|} > \frac{1}{2}.$
 \item \label{cond:nested_images} ${F}, T_n^2({F}),$ $T_n^4({F}) \subseteq I^n_{\alpha}$ and $T_n({F}),$ $T_n^3({F}),$ $T_n^5({F}), \subseteq I^n_{\overline{\alpha}}$.
%%%$\alpha = \pi_0^{-1}(k)$, with $1 < k \leq d$
 \item \label{cond:3} If $k \neq d,$ then $|T^{i_\alpha^n}(a_\alpha^n) - l_\alpha| = \eta|I^n_\alpha|$ for a unique $\tfrac{q^n_\alpha}{4} < i_\alpha^n < q^n_\alpha$ and 
 $$|T^i(a_\alpha^n) - l_\beta | \geq \delta {|I^n|},\ {\text{for}\ 0\leq i\leq {5}(q_\alpha^n + q_{\overline{\alpha}}^n) \ and\ \beta \in \A \ s.\ t.\ (i,\beta) \neq (i_\alpha^n, \alpha) }.$$
 \item \label{cond:4} If $k = d$, then $|T^{q_\alpha^n - 1}(a_\alpha^n) - l_\alpha| = \eta|I_\alpha^n| = |T^{q_\alpha^n}(a_\alpha^n) - l_{\overline{\alpha}}| $ and 
 $$|T^i(a_\alpha) - l_\beta| \geq \delta {|I^n|},\quad {\text{for}\ 0\leq i\leq {5}(q_\alpha^n + q_{\overline{\alpha}}^n),\ \beta \in \A \ s.\ t.\ (i,\beta) \notin \{(q^n_\alpha - 1, \alpha), (q^n_\alpha, \overline{\alpha})\} }.$$
\end{enumerate}
\end{lemma}

\begin{proof}
{ Let $(m_j)_{j \in \N}\subseteq \N$ be a sequence of $(\nu, \delta, M)$-good times and let $n:=m_j$ for some $j\in\N$ be one of such times.} It is well-known that the {endpoints} of a given IET satisfying the Keane condition belong to the orbits under $T$ of the {endpoints of any of the IETs in its Rauzy-Veech induction orbit. Since by Condition \ref{cond:good_derivative} of Definition~\ref{def:goodtimes}, for any $n\in \N$, 
$T$ has well-positioned discontinuities at time $m_n$,} so in particular the permutation of the IET $T_n$ is the same as that of $T$ and for any $\alpha\in\mathcal{A}$ such that $1 < \pi_0(\alpha) < d$, 
\[ l_\alpha = T^{i^n_\alpha}(l_\alpha^n), \quad \text{for a unique } \tfrac{q^n_\alpha}{4} < i_\alpha^n < q^n_\alpha,\] 
while if $\pi_0(\alpha) = d$, then
\[ l_\alpha = T^{q^n_\alpha - 1} (l_\alpha^n), \quad\quad l_{\overline \alpha} = T^{q^n_\alpha}(l_\alpha^n). \]
{Thus, since $(\nu, \delta, M)$-times are in particular $\delta$-good times (by Condition \ref{cond:balanced_time} of Definition~\ref{def:goodtimes})} 
by Lemma~\ref{lem:rigid_interval} applied to each $\mathcal{RV}^{m_n}(T)$ {and the above remarks, the properties stated in the conclusion of the Lemma follow.}
\end{proof}

\subsubsection{Definition of the good rigidity sets}\label{sec:goodtimes}
{Let $T = (\pi, \lambda)$ be a Keane IET with a symmetric permutation and let
 $(m_n)_{n \in \N}\subseteq \N$ be a sequence of $(\nu, \delta, M)$-good 
 rigidity times at $\alpha = \pi_0^{-1}(k)$ for $\varphi \in 
 LSS{^2}(\bigsqcup_{\alpha\in\mathcal A} I_\alpha)$ of the form 
 \eqref{eq:logartihmic_singularities}, where
 $1< \nu< 1 + \tfrac{1}{100d}$ and $0<\delta< \tfrac{1}{10d}$.} % as in Lemma \ref{lem:rigid_interval}. 
%letfor which there exists a sequence 
{For any given parameter $0 < \eta < \tfrac{1}{4}$,} we can define the sets {$\Xi_n=\Xi_n(\eta, \alpha)$ and the heights $h_n=h_n(\eta)$} that we will use as rigidity sets to verify the ergodicity criterion assumptions. {For any $n \in \N$, let
$$
{F^n_\alpha}:= [a^{m_n}_\alpha, b^{m_n}_\alpha]
$$
be the interval given by Lemma \ref{lem:towers_approaching_left_singularities} applied to the time $m_n$ and let {$q^{m_n}_\alpha$ } the height of the Rohlin tower over 
$I^{m_n}_\alpha\supseteq F^n_\alpha$. 
% let $F_\alpha^{{m_n}} = F{^{m_n}}_\alpha(\eta)$ and $q{^{m_n}}\alpha$} 
{Denoting by $\lceil x \rceil$ the ceiling of $x$ (i.e., the smallest integer larger than $x$),} define, for any $n \in \N$,
\begin{equation}\label{eq:set_Xi_n}
\Xi_n = { \Xi_n (\eta, \alpha) := \bigsqcup_{i = 0}^{\lceil q_\alpha^{m_n}/4\rceil} T^i(F^{n}_\alpha),}
\end{equation}
\begin{equation}
\label{eq:heights_Xi_n}
h_n { : = q_\alpha^{m_n} + q_{\overline\alpha}^{m_n}}.
\end{equation}
Notice that the sets $\Xi_n$ depend on $\eta$, {since the definition of $F^n_\alpha$ does, see \eqref{def:F}}. {Since} the intervals $T^i(F_\alpha^{n})$, for $i=0,\ldots,h{^{n}}-1$, are pairwise disjoint { (and thus the union in \eqref{eq:set_Xi_n} is disjoint),} { by Condition \ref{cond:big_int} in Lemma \ref{lem:towers_approaching_left_singularities}, we get
\begin{equation}\label{eq: boundonF_nlength}
	\frac{|I^{m_n}_{\alpha}|}{2}\le \textup{Leb}(F^{n}_{\alpha})\le 
	\frac{1}{h{_{n}}}.
\end{equation}}
{ In particular, if \eqref{eq:balanced_vectors} is satisfied, 
then there exists 
a constant $\delta>0$ such that
\[
\textup{Leb}(\Xi_n)>\delta\quad \text{for every }n\in\N.
\]}
\noindent It is not difficult to see that with these definitions, the partial rigidity conditions in Definition \ref{def:partial_rigidity} are satisfied by the sequences $\{\Xi_n\}_{n\in\N}$ and $\{{h_n}\}_{n \in \N}$, as well as by the sequences $\{\Xi_n\}_{n\in\N}$ and {$\{{2 h_n}\}_{n \in \N}$}. {As we shall see in later sections, {selecting the value of $\eta$ appropriately, the sequences $\{\Xi_n\}_{n\in\N}$ and {$\{{2 h_n}\}_{n \in \N}$} will also satisfy the tightness and oscillations assumptions (Conditions \ref{cond:tightness} and \ref{cond:decay_coefficients}) in Proposition \ref{prop:ergodicity_criterion}, thus guaranteeing the ergodicity of the skew-product $T_f$. }

\subsubsection{Estimates of Birkhoff sums along good rigidity sets}\label{sec:estimates}
{
In the rest of this section, we state and prove a result (which will be used in the proof of the main result in the last section, \S~\ref{sec:last}) which provides control for Birkhoff sums of points in the good rigidity sets $\{\Xi_n\}_{n \in \N}$ at times $\{2 h_n\}_n$, where $\{h_n\}_{n \in \N}$ are good rigidity times, namely for Birkhoff sums of the form $S_{2 h_n}\varphi' (x)$, where $x\in \Xi_n$ (see Proposition~\ref{lem:BScontrol} below).

Throughout this section, as before, we always assume that { $1< \nu< 1 + \tfrac{1}{100d}$ and $0<\delta< \tfrac{1}{10d}$ (so that Lemma~\ref{lem:towers_approaching_left_singularities} can be applied) and that $0<\eta<1/4$. 

\begin{proposition}\label{lem:BScontrol}
There exists a constant $D=D(\delta, \nu, M)>0$ such that, for any $\alpha\in \A$ 
and any sequence $\{m_n\}_{n\in \N}$ of $(\delta, \nu, M)$-good times for 
$\varphi$ at 
$\alpha$, for any $n\in\N $ and any $x\in \Xi_n(\alpha,\eta)$ (where $\Xi_n$ are the sets defined in the previous subsection \S~\ref{sec:goodtimes}) we have 
\begin{equation}\label{eq:lowerboud}
\left|S_{2h_n}{\varphi'}(x) - \frac{C^+_{\gamma(\alpha)}}{|x_0-l^{m_n}_\alpha|}
\right|\le D q^{m_n}_{\alpha} ,
%+ \frac{|C^+_{\alpha_\ast} |\nu}{\eta}, \qquad \text{where}\ \alpha_\ast = \begin{cases} \alpha & \text{if} \ \alpha=\pi_0^{-1}(k), k\neq d, \\ \overline{\alpha} & \text{if}\ \alpha = \pi_0^{-1}(d). \end{cases} 
\end{equation}
%{there 
where $x_0\in F^n_\alpha$ is the projection of $x=T^i (x_0)$ to the base of the tower $\Xi_n$, and $\gamma(\alpha) \in \A$ is given by 
\begin{equation}\label{def:gamma}
\gamma(\alpha):= \begin{cases} \alpha & \text{if} \ \alpha=\pi_0^{-1}(k),\ k\neq d, \\ \overline{\alpha} & \text{if}\ \alpha = \pi_0^{-1}(d). \end{cases} 
\end{equation}
%+ \frac{|C^+_{\alpha_\ast} |\nu}{\eta}, \qquad \text{where}\ \alpha_\ast = \begin{cases} \alpha & \text{if} \ \alpha=\pi_0^{-1}(k), k\neq d, \\ \overline{\alpha} & \text{if}\ \alpha = \pi_0^{-1}(d). \end{cases} 
\end{proposition}
%\begin{remark}\label{rk:closest}
\noindent As we will show, the term $C^+_{\gamma(\alpha)}/|x_0-l^{m_n}_\alpha|$ in \eqref{eq:lowerboud} is exactly the contribution given by the closest visit to a singularity from the right, which occurs during the first segment of the orbit of $x$ up to the tower and back to the base. %\end{remark}
%Thus, this proposition shows that $|S_{2h_n}{\varphi'}(x)|$ is comparable with the term $C^+_{\gamma(\alpha)}/|x_0-l^{m_n}_\alpha|$ which comes from the first closest visit to singularities from the right.

\smallskip
The rest of this section is devoted to the proof of this proposition.
\begin{proof}
We will split the proof into several steps. Let us first identify the closest visit to singularities. Throughout the proof, we fix $\alpha \in \A$ which we write $\alpha = \pi_0^{-1}(k)$, with $1 < k \leq d$ and fix $x\in \Xi_n(\eta, \alpha)$, which by definition of $\Xi_n$, we can write as 
\begin{equation}\label{def:x} 
x=T^i (x_0), \qquad x_0\in F^\alpha_n, \quad\ 0\leq i<\tfrac{q^{m_n}_\alpha}{4}. 
\end{equation}

\noindent {\it Closest visit.}
If we consider an orbit of $x$ of length at least $q^{m_n}_\alpha - i {+1}$ (i.e., all the way to the top of the tower and back to the base once), we claim that the closest visit to a singularity to the left has distance
\begin{equation}\label{eq:closest}
\left| x_0-l^{m_n}_\alpha \right| = \min_{0\leq \ell < q^{m_n}_\alpha -i+1} \text{dist} \left( T^\ell (x) , \{ l_\beta, \beta \in \A\}\right).
\end{equation}
To see this, let $i^{m_n}_\alpha $ be the height of the floor of the tower over $I^{m_n}_\alpha$ whose left endpoint is a singularity, i.e. $l_\alpha = T^{i^{m_n}_\alpha}(l_\alpha^{m_n})$. Since by the assumption that discontinuities are well-positioned (see $2$ in Definition~\ref{def:goodtimes}), the orbit of $x$ visits such floor, and the closest distance to a singularity to the left is given by 
\begin{align*}
 \left| T^{i_\alpha^n -i } (x)- l_\alpha\right| & = \left| T^{i_\alpha^n} (x_0)- l_\alpha\right| = \left| x_0- l^{m_n}_\alpha \right|, \qquad \text{if}\ k \neq d, \\
 \left| T^{q_\alpha^{m_n}-i} (x)- l_{\overline{\alpha}} \right|& = \left| T^{q_\alpha^{m_n}} (x_0)- l_{\overline{\alpha}}\right| = \left| x_0-l^{m_n}_\alpha\right| , \qquad \text{if}\ k = d,
\end{align*}
(where the equalities follow from the definition \eqref{def:x} of $x_0$ and by remarking that $T^{i_\alpha^n}$ acts as an isometry on the base of the tower, respectively). These closest visits produce a contribution of the form ${C_{\gamma(\alpha)}^+}/{|x_0-l_{\overline{\alpha}}|}$ in the Birkhoff sums of $\varphi'$, where $\gamma(\alpha) $ is as in \eqref{def:gamma}. 

 %Let us introduce the notation
%Thus,, where 
%${q^n_\alpha}/{4} < i_\alpha^n < q^n_\alpha$ if $k\neq d$
%$$ 
%_\alpha = |T^l_\alpha^n) - l_\alpha| \qquad \text{if}\ k \neq d, then
%$$%\begin{itemize}
% If $k \neq d,$ then $|T^{i_\alpha^n}(a_\alpha^n) - l_\alpha| = \eta|I^n_\alpha|$ for a unique $\tfrac{q^n_\alpha}{4} < i_\alpha^n < q^n_\alpha$ and 
% $$|T^i(a_\alpha^n) - l_\beta | \geq \delta {|I^n|},\ {\text{for}\ 0\leq i\leq {5}(q_\alpha^n + q_{\overline{\alpha}}^n) \ and\ \beta \in %\A \ s.\ t.\ (i,\beta) \neq (i_\alpha^n, \alpha) }.$$
 %\item If $k = d$, then $|T^{q_\alpha^n - 1}(a_\alpha^n) - l_\alpha| = \eta|I_\alpha^n| = |T^{q_\alpha^n}(a_\alpha^n) - l_{\overline{\alpha}}| $ 

\smallskip
\noindent {\it Decomposition.}
To control the Birkhoff sum $S_{2 h_n}\varphi' (x)$ where $x\in \Xi_n$, 
% Let $x\in\tilde \Xi_n$, { and write $x=T^i x_0$ with $x_0 \in \widetilde{F}^n_{\alpha}$. 
in view of the good tower dynamics given by Lemma \ref{lem:towers_approaching_left_singularities}, we can split the orbit of $x$ of length $2h_n=2q_\alpha^{m_n}+ 2q_{\overline{\alpha}}^{m_n}$ into $5$ orbit segments, one of length $q_\alpha^{m_n}-i$ (up to the top of the tower), three of length equal to the height of a tower and finally the last segment of length $i$ as follows: if we denote $x_{i+1}:= T^{(m_n)} (x_i)$, for $i\in\N$, then
\begin{equation}\label{eq:segments}
S_{2h_n} \varphi' (x)= S_{ q_\alpha^{m_n}-i} \varphi' (x)+ S_{q_{\overline{\alpha}}^{m_n} } \varphi' (x_1) + S_{ q_\alpha^{m_n}} (x_2) \varphi' (x_2) + S_{ q_{\overline{\alpha}}^{m_n} } \varphi' (x_3)+ S_{i} \varphi' (x_4).
\end{equation}
The three middle terms will be estimated by applying the $M$-good derivative control in Assumption \ref{cond:good_derivative} of Definition \ref{def:goodtimes} directly. The next step provides an estimate which will allow us to combine the first and last term and compare them to the Birkhoff sum $S_{q_{\alpha}^{m_n}}\varphi'(x_0)$. We remark that we only obtain an upper bound; the first and last term could be estimated separately, seeing the first as the difference of two terms to which the $M$-good derivative control applies, but this does not provide the bound \eqref{eq:lowerboud} since it would \emph{overcount} (i.e., count twice) the contribution of the closest visit.

\smallskip
\noindent {\it Control of variation via the second derivative.} We claim that there exists $K=K(\nu,\delta, M)>0$ such that, for any $n \in \N$ and any $x, y \in F^n_\alpha,$ 
\begin{equation}\label{eq:variation}
|S_{k}{\varphi'}(x) -S_{k}{\varphi'}(y) |\le K q^{m_n}_{\alpha}, \qquad \text{for \ all}\ 0\leq k\leq \frac{q^{m_n}_{\alpha}}{4}.
\end{equation}
(This estimate is used in the next step to compare the first and last term of the RHS of \eqref{eq:segments} to $S_{q_{\alpha}^{m_n}}\varphi'(x_0)$). By the mean value theorem, %which we can apply since if $J := [x, y] \subseteq F^n_\alpha\subseteq I^{m_n}_\alpha$, then $T^i\mid_J$ is an isometry, for $0\leq i< k$,
 there exists $z\in J:= (x, y) \subseteq F^n_\alpha$ such that
\begin{equation}\label{eq:meanvaluestep1}
|S_{k}{\varphi'}(x) -S_{k}{\varphi'}(y) |\le \left| S_{k}{\varphi''}(z)\right| |J|\leq \left| S_{k}{\varphi''}(z)\right| \frac{1}{q_{\alpha}^n}.
\end{equation}
Let us now estimate the Birkhoff sum $S_{k}{\varphi''}(z)$. Assuming that $|I|=1$ and since $\varphi$ is of the form \eqref{eq:logartihmic_singularities}, we have
\[
\varphi ''(x) = g''_\varphi (x) + \sum_{\beta \in \A} \frac{ C_\beta^+}{\left\{ (x - l_\beta)^2\right\}} 
+ \sum_{\beta \in \A} \frac{ C_\beta^-}{ \left\{ (r_\beta - x)^2 \right\} }.\]
Notice that the Birkhoff sums $S_k g''_\varphi (x)$ are trivially bounded by {$k \|g''_\varphi\|_\infty $,} for any $k \in \N$. Since we are assuming that $0\leq i <q^{m_n}_\alpha$ and (by the Definition~\ref{def:goodtimes} of good renormalization times) the discontinuities of $T$ are well positioned at each time $m_n$ (in the sense of Definition~\ref{def:well}), every point in the orbit in the orbit $\{ z, T(z), \dots T^{k-1}(z)\} $ has distance at least $|I^{m_n}_\alpha|$ from any singularity of $f$. Moreover, the points in this orbit segment (since they belong to distinct floors of the tower over $I^n_\alpha$) have minimum spacing (i.e., $\min_{i,j} |T^i(z)-T^j(z)|$ for $0\leq i, j< k$) also bounded below by $I^{m_n}_\alpha$. Thus, rearranging them from the closest to the endpoints, using monotonicity and $|I_\alpha^{m_n}|q_{\alpha}^{m_n}\leq 1$, we get, for each $\beta \in \A$, 
%of the function $1/x^2$, we can estimate, for each $\alpha\in \A$, the sum of $C_\alpha^\pm/x$ over these distances by:
$$
\left| \sum_{j=0}^{k-1}\frac{C_\beta^+ }{(T^j (z) - l_\beta)^2}\right| \leq
% \frac{C^+_\beta}{z_{\beta}^+} + 
\sum_{j=1}^{k-1}\frac{C_\beta^+}{j^2 |I^{m_n}_\beta|^2} \leq
% \frac{C^+_\beta}{z^+_\beta} + 
\frac{\pi^2}{2} C_\beta^+ (q^{m_n}_\beta)^2 . %(\sum_{i=1}^{+\infty} \frac{C_\beta^+}{i^2}). 
$$ 
A similar estimate holds for the sums corresponding to $C_\beta^-$ and $r_\beta$. Adding up all these estimates and recalling \eqref{eq:meanvaluestep1}, we get
$$|S_{k}{\varphi'}(x) -S_{k}{\varphi'}(y) |\le \left| S_{k}{\varphi''}(z)\right| \frac{1}{q_{\alpha}^{m_n}} \leq 2d C_{max} \frac{\pi^2}{2} q_{\alpha}^{m_n}, $$
where $C_{max}:= \max \{ C_\beta^\pm, \beta \in \A\}$, which proves \eqref{eq:variation} for $K:= 2dC_{\max }\pi^2/2$. 
\smallskip

\smallskip
\noindent {\it Final estimates.} Let us now estimate all terms of \eqref{eq:segments}. The sum of the first and the last term of RHS of \eqref{eq:segments} can be compared to the Birkhoff sum $S_{q_\alpha^{m_n}}\varphi (x_0)$ of the base point $x_0 \in F_\alpha^n$ using the previous step. Indeed, since $S_{q_\alpha^{m_n}}\varphi (x_0) = S_{i}\varphi (x_0 ) + S_{ q_\alpha^{m_n}-i} \varphi (x)$, we have
$$
\left| S_{ q_\alpha^{m_n}-i} \varphi (x)+ S_{i} \varphi \left(x_4\right)- S_{q_\alpha^{m_n}}\varphi (x_0) \right| 
= \left| S_{i} \varphi (x_4) - S_{i}\varphi (x_0 ) \right|\leq 
 C q_\alpha^{m_n}.
$$
Therefore, from this estimate and \eqref{eq:segments}, we get
\begin{equation}\label{eq:4terms}
\left| S_{2h_n} \varphi (x)- \left( S_{ q_\alpha^{m_n}} \varphi (x_0)+ 
 S_{ q_
{\overline{\alpha}}^{m_n} } \varphi (x_1) + S_{ q_\alpha^{m_n}-i} \varphi (x_2)
+ S_{ q_{\overline{\alpha}}^{m_n} } \varphi (x_3)\right) \right| \leq C q_\alpha^{m_n}.
\end{equation}
Let us now apply the $M$-good derivative control (i.e. Assumption \ref{cond:good_derivative} of Definition~\ref{def:goodtimes}) to each of the $4$ Birkhoff sums (of height $q^{m_n}_\alpha$ or $q^{m_n}_{\overline{\alpha}}$) in \eqref{eq:4terms}. Setting aside what is the contribution from the first closest visit \eqref{eq:closest} (see Step $1$), {by the good tower dynamics described by Lemma \ref{lem:towers_approaching_left_singularities} (see in particular Conditions \ref{cond:3} or \ref{cond:4}, according to which $\alpha\in \A$ is being considered) in Lemma~\ref{lem:towers_approaching_left_singularities},} we can control all other closest visits to a singularity from the left in each of the $4$ orbit segments by l by $\delta |I^n|$. Thus 

\begin{equation}
% S_{ q_\alpha^{m_n}} \varphi (x_0)+ S_{ q_{\overline{\alpha}}^{m_n} } \varphi (x_1) + S_{ q_\alpha^{m_n}-i} \varphi(x_2)+ S_{ q_{\overline{\alpha}^{m_n} } \varphi (x_3)
\left| S_{2h_n} \varphi (x) - \frac{C^+_{\gamma(\alpha)}}{|x_0-l^{m_n}_\alpha |} \right| \leq
 K q_\alpha^{m_n} + M (2 q_\alpha^{m_n}+ 2 q^{m_n}_{\overline{\alpha}} )
 +
 \frac{(8d-1) C^\pm_{max}}{\delta|I^{m_n}|},
 %\leq M q_\alpha^{m_n} (3+2\nu) + {10d\nu}q_\alpha^n,
\end{equation}
 where $C^\pm_{max}:= \max \{|C_\alpha^\pm|, \ \alpha\in\A \}$ and $K$ is 
 given by the second step. 
% \ \ \text{where}\ 
%C^\pm_{max}:= \max\{|C_\alpha^\pm, \ 
%|,|C_{\alpha}^-|;\
 %\alpha\in\A\}.
Using balance (which gives that $q_{\overline{\alpha}}^{m_n}\leq \nu 
q_\alpha^{m_n}$, as well as that $|I^{m_n}| q_\alpha^{m_n}\geq |I^{m_n} | 
\min_\beta q_\beta/\nu \geq 1 /d\nu $), this proves the lemma by setting 
$D=D(\eta, \delta, M):=K + M (2+2\nu) + d\nu (8d-1)C^\pm_{max} /\delta$. 
\end{proof}}}
% {Thanks to the good towers dynamics condition on closeness t singularities
%The closest visits of the orbit of $x$ up to time $k\leq q^n_\alpha$ to all other singularities are controlled by Condition $3$ or $4$ of the good tower dynamics (see Lemma~\ref{lemma:goodtower}) by $ \delta |I^n_\alpha|\leq \delta/ q^n_\alpha$ (where we used that $q^n\alpha |I^n_\alpha|\leq 1$ by disjointness of floors in a Rohlin tower). 
%$$
%|S_{k}{\varphi'}(x)|\le M q^n_\alpha + 2d \frac{\nu q^n_\alpha}{ \delta } .
%$$
%This shows that the conclusion hold setting $K(\delta):=M+2d\nu/\delta$.

\subsection{Cancellations and involution fixed points}\label{sec:cancellations}
In this section, { we exploit the symmetry of the permutation and the oddness assumption on the cocycle to show that there are cancellations of Birkhoff sums (which will later be crucial to verify the tightness assumption needed to prove ergodicity). The goal of this section is to prove the following:

\begin{proposition}[Approximate zeros of Birkhoff sums]\label{lem: zeroes}
Let $T=(\pi,\lambda)$ be an IET with a symmetric permutation $\pi$ and exchanged intervals $(I_\alpha)_{\alpha\in\mathcal{A}}$ and let $\varphi \in LSS^2(\bigsqcup_{\alpha \in \A} I_{\alpha})$. Assume $\{m_n\}_{n\in\N}$ is a sequence of $(\eta,\delta,M)$-good rigidity times for $\varphi $, for $\delta, \nu$ as in Lemma~\ref{lem:rigid_interval}. 
%\left(\frac{1}{d}-\frac{1}{100d}\right)|I{^{m_n}}|\le |I{^{m_n}}_{\alpha}|\le \left(\frac{1}{d}+\frac{1}{100d}\right)|I{^{m_n}}|\text{ for every }\alpha\in\mathcal A.
%\]
Then, for $n\in \N$ sufficiently large and any $\alpha\in\mathcal A$, there exists a point { $x^n_{\alpha}$ in the Rohlin tower $\bigsqcup_{i=0}^{q^{m_n}_{\alpha}-1}T^i\big(I^{m_n}_{\alpha}\big)$} satisfying
%\[
%|S_{q^n_{\alpha}+q^n_{\overline\alpha}}\varphi (x^n_{\alpha})|<L.
%\]
%for every $n\in \N$ 
%contains a point $x^{m_n}_{\alpha}$ for which 
	\begin{equation}\label{eq" known_bounded_sum}
	|S_{2h_n}\varphi ({x^{n}_{\alpha}})|\le \frac{1}{1000},\qquad \text{where} \ h_n= q^{m_n}_\alpha + q^{m_n}_{\overline{\alpha}}.
	\end{equation}
\end{proposition}
\noindent To prove the proposition, it will be crucial to control the location of the midpoints, or \emph{centers}, of each interval $I_\alpha$, namely 
$$
c_{\alpha}:= \frac{r_\alpha-l_\alpha}{2}, \qquad \text{where}\ I_\alpha:= [l_\alpha, r_\alpha), \quad \alpha \in \A,
$$
and of the midpoint $c_{1/2}$ of $I$, %(as well as that of their preimages and those of the midpoint) 
relative to the partitions produced by renormalization. Indeed, as shown in the following lemma, the midpoints have good cancellations properties.}

\begin{lemma}	\label{cor:bounded_sums}
	Let $T: I \to I$ be an IET with a symmetric permutation and {let $\varphi: I \to \R$ be a cocycle odd with respect to each interval of continuity, namely, a cocycle which verifies} \eqref{eq:oddly_symmetric}. Then, for every $n \geq 1$,
	\[ S_{2n} \varphi (T^{-n}(x_0)) = 0,\]
	where $x_0$ %:=\tfrac{1}{2}$ 
	is the middle point of $I$, and
	\[ S_{2n} \varphi (T^{-n}(x)) = - \varphi (T^n(x)),\]
{ when $x=c_\alpha$, for any $\alpha\in\mathcal{A}$, where $c_\alpha$ is the middle point the continuity intervals $I_\alpha$ for $T$.}
\end{lemma}

\medskip

{\noindent {\it Location of the centers.} Given an IET $T: I \to I$ and satisfying Keane's condition (so that the inducing intervals $I^n_{\alpha}$ are defined for any $n\in \mathbb{N}$), let us denote by $c^n_{\alpha}$ the centers 
of the inducing subintervals $I^n_{\alpha}$, for $\alpha \in \A$ and $n\in 
\mathbb{N}$ and by $c^n_{1/2}:|I^n|/2$ the midpoint of $I^n$.}
The following lemma { guarantees that at the $n$-th step of induction, each center is a vertical projection on the base of either one of the centers of the initial continuity intervals or of $I$.

\begin{lemma}[Location of the centers]\label{lm: position_of_centers}
	Let $T=(\pi,\lambda)$ be an IET satisfying Keane's condition with symmetric permutation $\pi$ and $|\lambda|=1$. Fix any $n\in \N$ such that $\pi^n=\pi$ and denote {by $|I^n|=\sum_{\alpha\in\mathcal{A}} \lambda^n_\alpha$} the length of the $n$-th inducing subinterval {$I_n$}.
{
%	 {$$c^n_{\alpha}\in T_n^{-1}I^n_{\beta({\alpha})}. , \qquad \text{for\ every}\ \alpha\in\mathcal A.$$}
%	 permutation on alphabet $\mathcal A$ of $d$ elements and $\lambda\in\Lambda^{\mathcal A}$ a unit length vector. For every $\alpha\in\mathcal A$ denote by $c_{\alpha}$ the center of the interval $I_{\alpha}$. 
	 Then, for any $\alpha \in \mathcal{A}\cup \{ 1/2\}$ there exist $s_\alpha^n \in \Z$ and $\alpha^-,\beta_1,\beta_2\in\mathcal{A}$, verifying
$$-q^n_{\alpha^-}\le {s}_\alpha^n<q^n_{\alpha},\ \ \text{for \ }\alpha\in\mathcal{A}\quad \text{and}\quad q^n_{\beta_2}\le {s}_{1/2}^n<q^n_{\beta_1},$$
%let $\alpha^-\in \mathcal{A}$ is such that 
%% {\cup \{1/2\}} \to \mathcal{A}$ be a map 
% $c^n_{\alpha}\in T_n^{-1}I^n_{{\alpha}^-}$ and $\beta_1,\beta_2 %\in\mathcal A$ are such that 
% $\frac{|I^n|}{2}\in I^n_{\beta_1}$ and %, \qquad
% $\frac{|I^n|}{2} \in T_n^{-1}(I^n_{\beta_2})$, 
 such that}
	\[
		\left\{c_{\alpha}\mid \alpha\in\mathcal A \cup \{ \tfrac{1}{2}\} \right\} = \left\{T^{{s}_\alpha^n}\big(c_{\alpha}^n\big)\,\big|\, \alpha\in\mathcal A \cup \{ \tfrac{1}{2} \}\right\}. 
%	\left\{c_{\alpha}\mid \alpha\in\mathcal A \right\}\cup\left\{\frac{|I|}{2}\right\}=\left\{T^{{s}_\alpha^n}\big(c_{\alpha}^n\big)\,\big|\, \alpha\in\mathcal A \right\}\cup\left\{T^{{s}_{1/2}^n}\left(\frac{|I_n|}{2}\right)\right\}.
	\]
	{	Furthermore, for infinitely many $n$, % one has that all ${s}_\alpha^m$, with $\alpha\in \mathcal{A}\cup\{1/2\}$ are positive, so that
\begin{equation}\label{eq:positive}{0< s}_\alpha^n<q^n_{\alpha}, \ \text{for\ all}\ \alpha \in \mathcal{A}, \qquad \ 0{<s}_{1/2}^n<q^n_{\beta_1}.
\end{equation}}
	\end{lemma} 

\begin{proof}%lm: position_of_centers
Let $(\pi,\lambda,\tau)$ be a translation structure on a surface $M$ such that $(\pi$, $\lambda)$ is as in the lemma statement. %and it is generic for $\mathcal{RV}$-induction. 
Abusing the notation, let us denote by $\mathcal I: M\to M$ the hyperelliptic involution on $M$, which can be geometrically seen as a rotation by $180^\circ$ on the polygonal representation of $(\pi,\lambda,\tau)$ (see \S~\ref{sec:RV}). The map $\mathcal I$ has exactly $d+1$ {fixed} points: $d$ of these points are the centers of the sides of the polygon $(\pi,\lambda,\tau)$, namely the points
\[
C_{\alpha}:=\sum_{i<\pi_0(\alpha)}(\lambda_{\pi_0^{-1}(i)}+i\tau_{\pi_0^{-1}(i)})\ +\ \frac{1}{2}(\lambda_{\alpha}+i\tau_{\alpha}), \qquad \alpha \in \A.
\]
The last { of the points fixed by} $\mathcal I$ is the center of the polygon, namely
\[
C_{1/2}=\frac{1}{2}\sum_{\alpha\in\mathcal A}(\lambda_{\alpha}+i\tau_{\alpha}).
\]

Pick the classical Poincaré section $I$ for the vertical translation flow on $(\pi,\lambda,\tau)$, a segment of length one whose left endpoint is $0$. Then $T=(\pi,\lambda)$ is the first return map of the translation flow. Note that, for every $\alpha\in\mathcal A$, we have
\begin{equation}\label{eq: centerrelation}
c_{\alpha}=C_{\alpha}+v_{\alpha},\ \text{where}\ v_{\alpha}:=-i\left(\frac{1}{2}\tau_{\alpha}+\sum_{i<\pi_0(\alpha)}\tau_{\pi_0^{-1}(i)}\right),
\end{equation}
and
\begin{equation}\label{eq: 1/2relation}
\frac{1}{2} = C_{1/2}+v_{1/2},\ \text{where}\ v_{1/2}:=-i\left(\frac{1}{2}\sum_{\alpha\in\mathcal A}\tau_{\alpha}\right).
\end{equation}

Consider now 
 { a step $n\geq 1$ of the extended Rauzy-Veech induction $(\pi^n,\lambda^n,\tau^n)=\mathcal R^n(\pi,\lambda,\tau)$ obtained such that 
% after $n$ steps of extended Rauzy-Veech induction, for some $n \geq 1$ such that 
 $\pi = \pi^n$.}{ Since $\pi$ is symmetric, one can see that the polygonal representation of $(\pi^n,\lambda^n,\tau^n)$ is a polygon without self-intersections. Let $q^n_{\alpha}$ be the} first return time of interval $I^{n}_{\alpha}$ to $I^n$ under the vertical translation flow. Furthermore, $\pi^n=\pi$ being symmetric also implies that
% and recording the subintervals of $I$ of length $|I^{n}_{\alpha}|$ with whom the orbit of $I^{n}_{\alpha}$ intersects.}
%Since $\pi^n = \pi$ is a symmetric permutation, an 
an involution $\mathcal I$ is given the rotation by $\pi$. Moreover, the centers of all sides $C^n_{\alpha}$ as well as the center of polygon $C^n_{1/2}$ are again centers of involution $\mathcal I$, that is,
\[
\{C_{\alpha}\mid \alpha\in\mathcal A \}\cup\{C_{1/2}\}=\{C_{\alpha}^n\mid \alpha\in\mathcal A \}\cup\{C^n_{1/2}\}.
\]
Thus, for every $\alpha\in\mathcal A\cup\{\tfrac{1}{2}\}$, there exists $\beta=\beta(\alpha)\in\mathcal A\cup\{\tfrac{1}{2}\}$ such that $C_{\alpha}=C_{\beta}^n$. We will now discuss three possible cases depending on $\beta$ (Case A, B, and C below).

	\begin{figure}[h!]
	\centering
	\captionsetup{width=.8\linewidth}
		\includegraphics[scale=0.6]{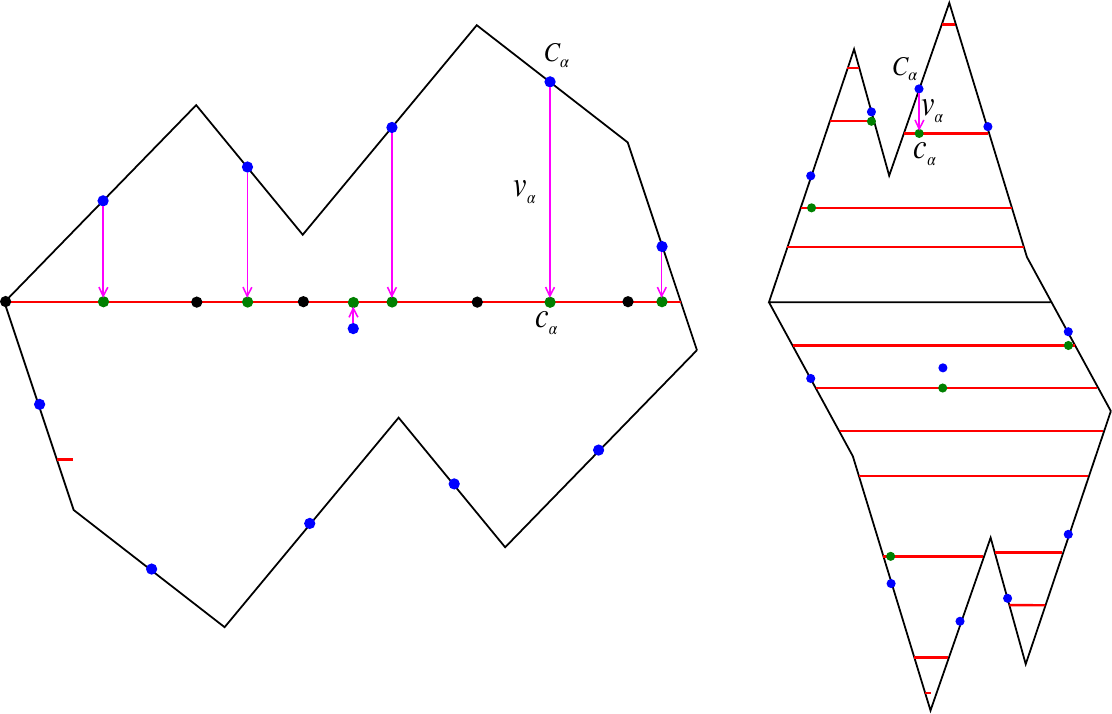}
	\caption{\small \label{fig:polygons} An example of a hyperelliptic surface with centers of involution $C_{\alpha}$ (blue), the corresponding centers of intervals $c_{\alpha}$, the point $\tfrac{1}{2}$ (green), and the vectors $v_{\alpha}$ connecting those points. On the right-hand side, the same surface after $n$ steps of (extended) Rauzy-Veech induction, together with the identification of the special points $C_\alpha$ and $c_{\alpha}$.}
\end{figure}

 { Before we discuss the cases, let us remark that in the polygonal 
 representation of the triple $(\pi^n,\lambda^n,\tau^n)$, which is obtained by 
 cutting and pasting the initial polygonal representation $(\pi,\lambda,\tau)$, 
 the interval $I^0$ (which is a trajectory of the horizontal flow) 
% on the translation surface obtained identifying the sides of the polygon, 
 %Then, for every $\alpha\in\mathcal A$,
 % the { floors of the Rohlin tower $\bigcup_{i=0}^{q_{\alpha}^n-1} I^{n}_{\alpha}$ 
 appears as a union of horizontal intervals contained in the polygon (as shown in Figure~\ref{fig:polygons}, on the right). Furthermore, vertical distances between any two of these intervals are return times of $I^n$ to itself (first return times when no other intervals are crossed in between).} 
 % (which is a first return time of ) is at most $\max_{\gamma \in\mathcal{A}}q^{m_n}_{\gamma}$ and at least $\max_{\gamma \in\mathcal{A}}q^{m_n}_{\gamma}$.}
 
\smallskip \noindent
\textbf{Case A.} Fix $\alpha\in \mathcal A$ and assume first that the corresponding $\beta$ is an element of $\A\setminus\{\pi_0^{-1}(d)\}$. Then 
\[
C_{\beta}^n=\sum_{i<\pi_0(\alpha)}(\lambda^n_{\pi_0^{-1}(i)}+i\tau^n_{\pi_0^{-1}(i)})\ +\ \frac{1}{2}(\lambda^n_{\alpha}+i\tau^n_{\alpha}).
\]
In particular, $C_{\beta}^n$ {belongs to the forward vertical flow trajectory of $c^n_{\beta}$ of lenght {$q^n_{\beta}$}, where $q^n_\beta$ is the first return time to $I^n_\beta$. Therefore, in the polygonal representation $(\pi^n, \lambda^n,\tau^n)$, $C_{\beta}^n$ lies on a segment of vertical trajectory between two successive intersections with the horizontal intervals representing $I^n$ in the polygon (as shown in Figure~\ref{fig:polygons}). 
% levels of the tower $\bigcup_{i=0}^{q_{\beta}^n-1} I^{n}_{\beta}$ as seen on the polygon.} 
Therefore, by \eqref{eq: centerrelation} (and recalling that $\beta=\beta(\alpha) $ was defined so that $C_\alpha=C^n_\beta$),} we have
\[
c_\alpha=C_{\alpha}+v_{\alpha}=C^n_{\beta}+v_\alpha.
\]
Since, for every $\alpha\in\mathcal A\cup\{\tfrac{1}{2}\}$, the point $C_{\alpha}$ lies on a vertical orbit segment which connects the point $c_{\alpha}$ with either $T(c_{\alpha})$ or $T^{-1}(c_{\alpha})$, it has to belong to a floor of the tower $\bigcup_{i=0}^{q_{\beta}^n-1} I^{n}_{\beta}$ immedietaly above or below $C^n_{\beta}$, as seen on the polygon. Thus, there exists $0\le s^n_{\beta}<q_{\alpha}^n$ such that $T^{s^n_{\beta}}\big(c_{\beta}^n\big)=c_{\alpha}$.

\smallskip \noindent
\textbf{Case B.} Assume now that $\beta=\pi_0^{-1}(d)$. If $\sum_{i<d}\tau^n_{\pi_0^{-1}(i)} + \frac{1}{2}\tau^n_{\pi_0^{-1}(d)}>0$, then the procedure is identical to the one in Case A. If $\sum_{i<d}\tau^n_{\pi_0^{-1}(i)} + \frac{1}{2}\tau^n_{\pi_0^{-1}(d)}<0$, then $C_{\pi_0^{-1}(d)}^n$ lies in the backward translation orbit of $c_{\pi_0^{-1}(d)}^n$. To find $s^n_{\pi_0^{-1}(d)}$ it is enough to proceed analogously as in Case A, but using the tower
 $\bigcup_{i=0}^{q_{\beta}-1} I^{n}_{\beta}$, where $\beta=\overline{\pi_0^{-1}(d)}$. In this case, one finds that 
 $$
 -q^n_{\pi_0^{-1}(d)} \leq s^n_\alpha \leq 0,
 $$
which yields the desired time if we set $\alpha^-:=\pi_0^{-1}(d)$.

\smallskip \noindent
\textbf{Case C.} Assume that $\beta=\tfrac{1}{2}$. Note that 
\[
C_{1/2}^n=\frac{1}{2}\sum_{\alpha\in\mathcal A}(\lambda^n_{\alpha}+i\tau^n_{\alpha}).
\]
There are two subcases corresponding to the sign of $\sum_{\alpha\in\mathcal A}\tau^n_{\alpha}$. If it is positive, then the point $C^n_{1/2}$ is in the forward translation orbit of $|I^n|/2$, and one proceeds as in Case A using \eqref{eq: 1/2relation}. Similarly, if it is negative, the point $C^n_{1/2}$ is in the backward translation orbit of $|I^n|/2$, and the conclusion follows analogously to Case B, {(setting $\beta_1,\beta_2$ to be respectively the letters of $\mathcal{A}$ such that ${|I^n|}/{2}\in I^n_{\beta_1}$ and ${|I^n|}/{2} \in T_n^{-1}(I^n_{\beta_2})$).} 
%\end{proof}

\smallskip
%\begin{remark}\label{rem:onlyforward}
{

	Note finally that if $n$ is such that $\tau^n$ verifies
		\begin{equation}\label{eq:tauchoice}
	\sum_{\alpha\in\mathcal A}\tau^n_{\alpha}>0>\sum_{\alpha\in\mathcal A}\tau^n_{\alpha}-\tfrac{1}{2}\tau^n_{\pi_1^{-1}(d)},
	\end{equation}
 from the previous proof we can conclude that} all $s^n_{\alpha}$, $\alpha\in\mathcal{A}$ as well as $s^n_{1/2}$ are positive. Indeed, $s_\alpha^n$ are always positive in Case A, and under these assumptions also in Case B and Case C,
% in this case, for every $\alpha\in\mathcal A\cup \{1/2\}$ 
 the point {$C^n_{\beta}$} {belongs to the segment of the forward (rather than backward) orbit of the point $c^n_{\alpha}$ (where $\alpha=\sigma(\beta)$) consisting of the iterates up to the first return to the interval $I^n$.
 
 It now follows from ergodicity of the normalized extended Rauzy-Veech induction $\tilde{\mathcal{R}}$ that, for almost every choice of the suspension datum $\tau$, there exists infinitely many $n$ such that \eqref{eq:tauchoice} holds (this can be seen for example first choosing a specific parameter $\tau_0$ such that \eqref{eq:tauchoice} holds and then, 
 since the cocycle is locally constant, building an open set $U_0$ in the parameter space $\tilde{\mathcal{M}}$ around $\tau_0$ such that triple in $U_0$ has suspension datum that satisfies \eqref{eq:tauchoice}. Then, it suffices to choose $\tau$ so the orbit of $(\pi,\lambda, \tau)$ is recurrent to $U_0$). Choosing one of such (recurrent) $\tau$ for the proof, the second part of the lemma follows.
}
 \end{proof}

\begin{remark}\label{rem:onlyforward}
The proof shows that for any triple $(\pi,\lambda,\tau)$ giving a polygonal representation, and any $n\in \N$ such that the suspension datum $\tau^n$ of the iterate $\mathcal{R}^n (\pi,\lambda,\tau)$ satisfies \eqref{eq:tauchoice}, the stronger conclusion that all $s_{\alpha}^n$ are positive, namely \eqref{eq:positive}, holds.
\end{remark}

\medskip
{ \noindent {\it Cancellations.}
We can now use the oddness assumption on the cocycle, together with the location of the centers, to show cancellations. For this purpose, we will use the following consequence of the following result (first proved in \cite{chaika_singularity_2021}), whose short proof we recall for completeness.

\begin{lemma}[Lemma 4.6 in \cite{chaika_singularity_2021}]
	\label{lem:value0}
	Let $T:I \to I$ be an IET with a symmetric permutation, and let $\varphi: I \to \R$ verifying \eqref{eq:oddly_symmetric}.
	%\[\varphi \circ T^{-1} = -\varphi \circ \mathcal{I}_I. \]
	Then, for every $n \geq 1$ and any $x \in I$,
	\[ S_n\varphi \left(T^{-n}(\mathcal{I}_I(x))\right) = -S_n \varphi (x).\]
	 %, \quad \text{ where } x_0 = \frac{|I|}{2},\]
	%where $x_0$ is the middle point of $I$.
\end{lemma}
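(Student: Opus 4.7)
The plan is to reduce the claim, via Lemma~\ref{lemma:involutionodd}, to two separate inputs: the reformulation $\varphi \circ T^{-1} = -\varphi \circ \mathcal{I}_I$ of the oddness assumption, and an intertwining identity $\mathcal{I}_I \circ T = T^{-1} \circ \mathcal{I}_I$ that holds for any IET with a symmetric permutation. Granted both, a short telescoping computation delivers the conclusion.

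First, I would verify the intertwining $\mathcal{I}_I \circ T = T^{-1} \circ \mathcal{I}_I$ on a full-measure set by direct inspection on each continuity interval $I_\alpha$. The discussion preceding Lemma~\ref{lemma:involutionodd} already records that the symmetric permutation forces $T(I_\alpha) = \mathcal{I}_I(I_\alpha)$, which pins down the translation constant of $T|_{I_\alpha}$ to $w_\alpha = 1 - l_\alpha - r_\alpha$. A one-line calculation then gives $\mathcal{I}_I(T(x)) = 1 - x - w_\alpha = T^{-1}(\mathcal{I}_I(x))$ for every $x$ in the interior of $I_\alpha$, and by induction one obtains $\mathcal{I}_I \circ T^j = T^{-j} \circ \mathcal{I}_I$ for every $j \in \mathbb{Z}$. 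The countable bad set where iterates fall on discontinuities does not affect the statement, since the Birkhoff sums in question are only meaningful off of it.

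Next I would unfold the Birkhoff sum and apply the intertwining:
\[
 S_n\varphi\bigl(T^{-n}\mathcal{I}_I(x)\bigr) = \sum_{k=0}^{n-1}\varphi\bigl(T^{k-n}\mathcal{I}_I(x)\bigr) = \sum_{j=1}^{n}\varphi\bigl(T^{-j}\mathcal{I}_I(x)\bigr) = \sum_{j=1}^{n}\varphi\bigl(\mathcal{I}_I T^j(x)\bigr).
\]
Applying the oddness reformulation $\varphi(T^{-1}y) = -\varphi(\mathcal{I}_I y)$ with $y = T^j(x)$ yields $\varphi(\mathcal{I}_I T^j(x)) = -\varphi(T^{j-1}(x))$ for each $j \geq 1$, so the sum collapses to $-\sum_{j=1}^{n}\varphi(T^{j-1}(x)) = -S_n\varphi(x)$, as desired.

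The main (and essentially only nontrivial) step is establishing the conjugacy $\mathcal{I}_I \circ T = T^{-1} \circ \mathcal{I}_I$; it is the unique place where the symmetry of $\pi$ actually enters, and the rest is formal cocycle manipulation. One could alternatively argue by induction on $n$, using Lemma~\ref{lemma:involutionodd} as the base case and the same conjugacy in the inductive step, but the direct computation above is arguably cleaner.
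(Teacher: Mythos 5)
Your proof is correct and self-contained; the paper, by contrast, does not include an argument at all, deferring to an external reference (\cite[Lemma 4.6]{chaika_singularity_2021}). Your route isolates the two genuinely relevant inputs cleanly: the conjugacy $\mathcal{I}_I \circ T = T^{-1} \circ \mathcal{I}_I$, which is a feature of the symmetric permutation alone and which you verify from the formula $w_\alpha = 1 - l_\alpha - r_\alpha$ (equivalent to the observation in the paper preceding Lemma~\ref{lemma:involutionodd} that $\mathcal{I}_{I_\alpha} = \mathcal{I}_I \circ T$ on $I_\alpha$), and the oddness relation \eqref{eq:oddly_symmetric}, which is used once per summand. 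The change-of-index step $\sum_{k=0}^{n-1}\varphi(T^{k-n}\mathcal{I}_I(x)) = \sum_{j=1}^{n}\varphi(\mathcal{I}_I T^j(x)) = -\sum_{j=1}^{n}\varphi(T^{j-1}x)$ then closes immediately, and you correctly flag the measure-zero set where orbits hit discontinuities as harmless. What the paper's citation buys is brevity; what your argument buys is transparency about exactly where the symmetry of $\pi$ enters (only in the conjugacy) versus where the oddness of $\varphi$ enters (only in the pointwise cancellation), which makes it easier to see how the hypotheses are used and would be a reasonable substitute for the citation if one wanted the paper to be self-contained.
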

\begin{proof}
Recall that symmetry of $T$ implies that $T\circ \mathcal{I}_I=\mathcal{I}_I\circ T^{-1}$ where $\mathcal{I}_I(x)=|I|-x$. %and hence is idempotent ($\mathcal{I}_I^2=Id$).
 Since $\varphi $ is odd with respect to each continuity interval and by Lemma~\ref{lem:odd_cocycle}, $$\varphi \circ T^{-1}\circ \mathcal{I}_I=-\phi.$$ Thus 
\begin{align*}
S_n\varphi(T^{-n}(\mathcal{I}_I(x)))&=\sum_{0\leq i<n}\varphi (T^{i-n}(\mathcal{I}_I(x)))=\sum_{0\leq i<n}\varphi (T^{-1-i}(\mathcal{I}_I(x)))\\
&=\sum_{0\leq i<n}\varphi (T^{-1}(\mathcal{I}_I(T^i(x))))
=-\sum_{0\leq i<n}\varphi (T^i(x))=-S_n\varphi(x),
\end{align*}
	which is the desired equality.
\end{proof}}
\noindent {Combining the lemma above with the location of the centers Lemma~\ref{lm: position_of_centers}, we can now prove Lemma~\ref{cor:bounded_sums} and then Proposition~\ref{lem: zeroes}.}
\begin{proof}[Proof of Lemma~\ref{cor:bounded_sums}]
	Let $n \geq 1$. Consider first the middle point $x_0:=|I|/2$ of $I$. Notice that $\mathcal{I}_I(x_0) = x_0$, so it follows from Lemma \ref{lem:value0} that $$ S_{2n} \varphi (T^{-n}(x_0)) = S_{n} \varphi (T^{-n}(x_0)) + S_{n} \varphi (x_0) = 0.$$
	Consider now the case when $x=c_\alpha$, for some $\alpha\in\mathcal{A}$. Since $T$ has a symmetric permutation, it follows that $T(c_\alpha) = \mathcal{I}_I(c_\alpha)$. Thus, by Lemma~\ref{lem:value0}, 
	\[S_{n + 1} \varphi (T^{-n}(c_\alpha)) + S_{n + 1} \varphi (c_\alpha) = 0.\]
	Recall that, by \eqref{eq:oddly_symmetric}, $\varphi $ is odd on each interval of continuity of $T$ and thus $\varphi (c_\alpha) = 0$. Hence
	\begin{align*}
	S_{2n} \varphi (T^{-n}(c_\alpha)) &= S_{n + 1} \varphi (T^{-n}(c_\alpha)) - \varphi (c_\alpha) + S_n \varphi (c_\alpha) \\
	& = S_{n + 1} \varphi (T^{-n}(c_\alpha)) + S_{n+1} \varphi (c_\alpha) - \varphi (T^n(c_\alpha)) 
	= - \varphi (T^n(c_\alpha)),
	\end{align*}
	which also proves the second equality.
\end{proof}

\begin{proof}[Proof of Proposition~\ref{lem: zeroes}] 
{
In view of	Lemma \ref{lm: position_of_centers} and Remark~\ref{rem:onlyforward}, for any $\alpha \in \mathcal{A}$, there exists $\beta=\beta(\alpha) \in \mathcal{A}\cup \{\tfrac{1}{2}\}$ such that the midpoint $c_{\beta}$ belongs to the Rohlin tower over $I^{m_n}_\alpha$ and, more precisely, there exists $-q^{m_n}_{-\alpha} \leq s_{\alpha}^{m_n}< q^{m_n}_\alpha$ such that $c_\beta= T^{ s_{\alpha}^{m_n} }(c_\alpha^n)$. We assume WLOG that $s_{\alpha}^{m_n} > 0$. We will show that
\begin{equation}\label{def:xalphan}
x_\alpha^n:= T^{ s_{\alpha}^{m_n}-h_n }(c_\alpha^n)= T^{-h_n}( c_{\beta}) , \end{equation}
verifies the desired properties.
 
If $\beta=\tfrac{1}{2}$, i.e., $c_{1/2}=|I|/2$ belongs to a floor of the tower over $I^n_\alpha$, the expression on the LHS is just equal to 0 {by the first part of Lemma~\ref{cor:bounded_sums}. % (applied to the point $x$ and the time $h_n$)}. 
For any other $\alpha \in \mathcal{A}$, Lemma \ref{lm: position_of_centers}, together with Remark~\ref{rem:onlyforward}, gives that there exists $\beta=\beta(\alpha)\in \mathcal{A}$ such that 
$$c_\beta= T^{s^n_\alpha} (c^n_\alpha), \qquad \text{for\ some}\ {0} \leq s^n_\alpha \leq q_\alpha^n.$$}
Thus, by the definition \eqref{def:xalphan} of $x^n_\alpha$ and by the second part of Lemma~\ref{cor:bounded_sums} (taking $x$ to be $c_\beta$), we have
\begin{equation}\label{eq:semizero}
S_{2h^n}\varphi (x^{n}_{\alpha}) = S_{2h^n}\varphi \left(T^{-h_n} (c_{\beta})\right) = -\varphi \left(T^{h_n}( c_\beta)\right).
\end{equation}
%Indeed, if $x^{m_n}=\frac{1}{2}$, Otherwise, by picking $\nu$ (in the $\nu$-balanced condition) sufficiently close to 1, 
\noindent { Recalling that $h_n= q^{m_n}_\alpha+q^{m_n}_{\overline{\alpha} }$, by the good towers dynamics described by Lemma~\ref{lem:towers_approaching_left_singularities} (since $c_\beta$ is a midpoint and hence belongs to the interval $F^n_\alpha$ which satisties, in particular, Condition \ref{cond:nested_images} of Lemma~\ref{lem:towers_approaching_left_singularities}), $T^{h_n} (c_\beta)$ belongs to the same floor of the Rohlin tower 
over $I^{m_n}_{\alpha}$ than $c_\beta$ (namely, they both belong to $T^{{ s^n_\alpha}} I^{m_n}_{\alpha}$). 
Thus, as $n\to \infty$, $T^{h_n} (c_\beta)$ converges to $c_\beta$. Since 
$\varphi$ is locally continuous, it follows from \eqref{eq:semizero} that 
\[
\lim_{n\to\infty}S_{2h^n} \varphi (x_\alpha^n) = - \lim_{n\to\infty} \varphi \left(T^{h_n}( c_\beta)\right) = -\varphi(c_\beta)=0,
\]
where the last equality follows from the assumption that $\varphi$ is odd on each interval (see Definition~\ref{def:odd}, that implies, since $\mathcal{I}_\alpha (c_\beta)=c_\beta$, that $c_\beta$ is a zero of $\varphi$). 
}}
\end{proof}

\subsection{Full measure of existence of good renormalization times}\label{sec:Egoodtimes}
This section proves that good renormalization times exist for almost every symmetric IET. The main result is the following.
\begin{proposition}[Existence of good renormalization times]
\label{prop:full_measure_cond}
{ For any $1< \nu< 1 + \tfrac{1}{100d}$ and $0<\delta< \tfrac{1}{10d}$,} for 
a.e. IET $T = (\pi, \lambda)$ with a symmetric permutation, any $\varphi \in 
LSS^2(\bigsqcup_{\alpha\in\mathcal A} I_\alpha)$ of the form 
\eqref{eq:logartihmic_singularities}, and any $\alpha \in \A$, there exist { 
$M>0$ and $(m_n)_{n\in \N} \subseteq \N$ such that $(m_n)_{n\in \N}$ is a 
sequence of $(\nu, \delta, M)$-good renormalization times} {for $\varphi$ at 
$\alpha$} (in the sense of Definition~\ref{def:goodtimes}) and, for every 
$\alpha \in \mathcal{A}\cup \{ \tfrac{1}{2}\},$ there exists $\beta\in \A\cup\{\tfrac{1}{2}\}$ 
such that $T^{s_\alpha^n}(c_{\alpha}^n) = c_\beta$, for some $ 0\leq s^n_\alpha 
<q^{m_n}_\alpha$ (i.e.,\eqref{eq:positive} and the stronger conclusion of 
Lemma~\ref{lm: position_of_centers} hold).
\end{proposition}
The proposition above is proved in \S~\ref{sc:proof_full_measure_cond}, after recalling a result from \cite{ulcigrai_absence_2011}, which will be crucial for the $M$-good derivative control at good times.
% \ref{prop:full_measure_cond}. 

\subsubsection{Control of $M$-good derivative times via Rauzy-Veech induction}
\label{sc:goodderivative}
The last key ingredient needed for the proof is the {existence of $M$-good derivative control times}. The third author showed in \cite{ulcigrai_absence_2011} that it is possible to obtain $M$-good derivative control times {by considering return times to suitably defined compact sets in the parameter space $ \tilde{\mathcal M}$ of the extended Rauzy-Veech induction. Given a compact subset $K\subseteq \tilde{\mathcal{M}}$, let $A_K$ be the accelerated cocycle corresponding to returns to $K$ (see \cite{ulcigrai_absence_2011} for definitions).}

\begin{proposition}[see \cite{ulcigrai_absence_2011}]\label{thm:derivativeestimates}
{	Given any compact subset $K\subseteq \tilde{\mathcal M}$ such that there exists $D_K>0$ so that for any $T\in K$, $A_K(T)>0$ and $\Vert A_K(T)\Vert\leq D_K$, 
% of the form $\Delta_A\times \Theta_A \times \{ \pi\}$ where $A:=A_\gamma>0$ and $\gamma$ is simple, 
%be a compact subset of the space of normalized polygonal representations 
%of positive Lebesgue measure, 
%satisfying the following properties:
%	\begin{itemize}
%		\item there exists a finite Rauzy path $\gamma$ such that if $(\pi,\lambda,\tau)\in K$ then there exists $(\tilde\pi,\tilde\lambda,\tilde\tau)\in\tilde{\mathcal M}$ such that $\mathcal{R}^{|\gamma|}(\tilde\pi,\tilde\lambda,\tilde\tau)=(\pi,\lambda,\tau)$ and $(\tilde\pi^i)_{i=0}^{|\gamma|-1}$ coincides with $\gamma$, 
%		\item the Rauzy matrix $A_{\gamma}$ is positive, %
%		\item there exists $\nu > 0$ such that $\lambda$ is $\nu$-balanced for every $(\pi,\lambda,\tau)\in K$.
%	\end{itemize}
%	Then, 
	there exists a measurable subset $E\subseteq K$ of positive Lebesgue 
	measure such that, if the iterates under normalized extended Rauzy-Veech 
	induction $( \mathcal{R}^{k}(\pi,\lambda,\tau))_{k\in \mathbb{N}}$ visit 
	$E$ along some increasing subsequence $(k_n)_{n\in\N} \subseteq \N$, i.e., 
	${\tilde{\mathcal{R}}}^{k_n}(\pi,\lambda,\tau)\in E$ for any $n\in \N$, 
	then, for any $\varphi\in LSS^2(\bigsqcup_{\alpha\in\mathcal A} I_\alpha)$ 
	there exists a constant $M=M(\varphi)$ such that $(k_n)_{n\in \N}$ are 
	$(M,\varphi)$-good derivative control times. }
%		\[
%	\left|S_r({\varphi'})(z_0)-\left(\sum_{i=1}^{s}\frac{C_\alpha^+}{z_{i}^+}+\frac{C_\alpha^-}{z_{i}^-}\right)\right|\le Mr,
%	\]
%for every $z_0\in I^{k_n}_\alpha(\pi,\lambda,\tau)$ and $0\le r\le h^{k_n}_{\alpha}$, where $z_{i}^+$ and $z_{i}^-$ are the closest visits to the singularities of $T$ by the first $r$ iterates of $z_0$.
\end{proposition}

\begin{proof}
{ Proposition~\ref{thm:derivativeestimates} follows from the proof of 
\cite[Proposition 4.2]{ulcigrai_absence_2011}, with the set $E$ being the 
measurable set $E_D$ in the proof of Proposition~\ref{thm:derivativeestimates}, 
as long as we can verify that it can be applied to $\varphi\in 
LSS^2(\bigsqcup_{\alpha\in\mathcal A} I_\alpha)$. Indeed, in 
\cite{ulcigrai_absence_2011}, the author was investigating roof functions of 
special flows, which are special representations of locally Hamiltonian flows 
{and therefore the result was proved for functions of the form 
\eqref{eq:logartihmic_singularities} under the additional assumption that, for 
every $\alpha\in\A$, the constants $C^+_{\alpha}$ and $C^-_\alpha$ were 
positive, while we now want to apply it to the more general class of cocycles 
investigated in this article, for which this is not necessarily the case}. 
However, one can see that the proof of Proposition 4.2 in 
\cite{ulcigrai_absence_2011} works verbatim for all functions $\varphi$ whose 
derivative $\varphi\varphi'$ has the form
\[
{\varphi'}(x)=g(x)+\sum_{\alpha\in\A} D_{\alpha}^+ 
\left(|I|\left\{\frac{(x-l_{\alpha})}{|I|} \right\}\right)^{-1} 
+\sum_{\alpha\in\A} D_{\alpha}^- \left(|I|\left\{\frac{({r_{\alpha}-x})}{|I|} 
\right\}\right)^{-1},
\]
where $\{ x \} $ denote the fractional part of $x\in [0,1]$, $g$ has bounded variation and the (possibly negative) constants $D_\alpha^\pm \in \R$ are such that } {
\begin{equation}\label{eq: derconst}
\sum_{\alpha\in\mathcal A}\left( D_{\alpha}^+ +D_{\alpha}^-\right)=0.
\end{equation}}
{ We claim that cocycles ${\varphi}\in 
LSS^2(\bigsqcup_{\alpha\in\mathcal A} I_\alpha)$, that we consider in this 
paper, have this form}. Indeed, if
\[
\varphi (x) = g_\varphi (x) + \sum_{\alpha \in \A} C_\alpha^+ \log\left(|I|\left\{\frac{(x - l_\alpha)}{|I|}\right\}\right) + C_\alpha^- \log\left(|I|\left\{\frac{(r_\alpha - x)}{|I|}\right\}\right),
\]
then 
\[
\varphi '(x) = g'_\varphi (x) + \sum_{\alpha \in \A} C_\alpha^+ \left(|I|\left\{\frac{(x - l_\alpha)}{|I|}\right\}\right)^{-1} - C_\alpha^- \left(|I|\left\{\frac{(r_\alpha - x)}{|I|}\right\}\right)^{-1}.
\]
Thus
\[
D_{\alpha}^+=C_{\alpha}^+\ \text{and}\ D_{\alpha}^-=-C_{\alpha}^-\text{ for every }\alpha\in\A.
\]
By the definition of $LSS^2(\bigsqcup_{\alpha\in\mathcal{A}} I_\alpha)$, we have $\sum_{\alpha\in\mathcal{A}}C_{\alpha}^+=\sum_{\alpha\in\mathcal{A}}C_{\alpha}^-$. 
Thus, \eqref{eq: derconst} is satisfied and we can apply {\cite[Proposition 4.2]{ulcigrai_absence_2011},} 
to derivatives of cocycles in $LSS^2(\bigsqcup_{\alpha\in\mathcal{A}} I_\alpha)$. 
\end{proof}
\begin{remark}
{ The proof of Proposition~\ref{thm:derivativeestimates} actually shows that, in our setting, %since we are considering odd cocycles, we actually have that
\[
\sum_{\alpha\in\mathcal A}C_{\alpha}^+=\sum_{\alpha\in\mathcal A}C_{\alpha}^-=0.
\]}
\end{remark}

\subsubsection{Existence of good renormalization times}\label{sc:proof_full_measure_cond}
 We are now in a position to prove Proposition \ref{prop:full_measure_cond}, {which follows from by now classical arguments using (renormalized) Rauzy-Veech induction and exploiting its ergodicity.} 
\begin{proof}[Proof of Proposition \ref{prop:full_measure_cond}] { Let us show that there exists a compact subset} $K\subseteq \tilde{\mathcal M}$ such that every $(\pi,\lambda,\tau)\in K$ satisfies the following conditions:
	\begin{enumerate}[(a)]
		\item \label{cond:sym} $\pi$ is a symmetric permutation,
			\item \label{cond:drift} $(\pi,\lambda)$ has a $\delta$-drift,
			\item \label{cond:balanced} $\lambda$ is $\nu$-balanced,
				\item \label{cond:balanced height} $h$ is $\nu$-balanced for 
				some {$\nu<3$}, where $h=-\Omega_{\pi}\tau$, 
				{ and $\Omega_{\pi}$ was defined in 
				\eqref{eq:Omega},
\item \label{cond:tau2}$\tau$ is such that	%\begin{equation}\label{eq:tauchoice}
	$$
\sum_{\alpha\in\mathcal A}\tau_{\alpha}>0>\sum_{\alpha\in\mathcal 
A}\tau_{\alpha}-\tfrac{1}{2}\tau_{\pi_1^{-1}(d)},$$
		\item \label{cond:tau} for every $i\in\{2,\ldots,d-1\}$, we have $$\frac{ h_{\pi_0^{-1}(i)}}{2}<\sum_{j<i} \tau_{\pi_0^{-1}(j)}, $$ 
		}%{ There is a weird mix here between $\alpha$ and $\pi_0^{-1}(i)$.}
		
		\item \label{cond:path} $K$ satisfies the assumptions of Proposition~\ref{thm:derivativeestimates};	
	\end{enumerate}
{ 
To build $K$, first choose $\lambda_0 $ as a positive vector such that $(\pi,\lambda_0)$ is Keane and has $\delta$-drift (which exists, if $\delta< \tfrac{1}{2d}$, in view of Remark~\ref{rk:notempty}). We claim that we can find a suspension datum $\tau$ such that, setting $h=-\Omega_\pi \tau$, condition \eqref{cond:tau2} as well as \eqref{cond:tau} hold. }
	{ For this purpose, take 
	$\tau_0:=\big(1,\frac{1}{100(d-2)},\ldots,\frac{1}{100(d-2)},-1\big)$. Then
	\[
	\sum_{\alpha\in\A}\tau_{\alpha}=\frac{1}{100}>0\quad\text{and}\quad
	\sum_{\alpha\in\A}\tau_{\alpha}- 
	\frac{1}{2}\tau_{\pi_1^{-1}(d)}=-\frac{49}{100}<0,
	\]
	thus it satisfies Condition \eqref{cond:tau2}. Moreover,
	\[
	h=-\Omega_{\pi}\tau_0=\left(\frac{99}{100}, 
	2-\frac{d-3}{100(d-2)},2-\frac{d-5}{100(d-2)},\ldots,2+\frac{d-5}{100(d-2)},
	2+\frac{d-3}{100(d-2)},\frac{101}{100}\right).
	\]
	Thus, for $i\in\{2,\ldots,d-1\}$, we get
	\[
	\frac{\sum_{j<i}\tau_{\pi_0^{-1}(j)}}{h_{\pi_0^{-1}}}
	=\frac{1+\frac{i-2}{100(d-2)}}{2-\frac{d-3}{100(d-2)}+\frac{2(i-2)}{100(d-2)}}
	=\frac{1}{2}+\frac{d-3}{2(200(d-2)-(d-3)+2(i-2))}>\frac{1}{2},
	\]
	and, in particular, \eqref{cond:tau} is satisfied.
	}
	{ 
	
Using the fact that Rauzy-Veech induction cocycle is locally continuous, we can find an open neighborhood $U$ of the triple $(\pi,\lambda_0,\tau_0)$ such that conditions \eqref{cond:sym}, \eqref{cond:drift} and \eqref{cond:tau} are satisfied, for any triple in $U$. Then any compact $K\subseteq U$ is such the length and height vectors of any triple in $K$ satisfy conditions \eqref{cond:balanced} and \eqref{cond:balanced height}. Finally, to guarantee that \eqref{cond:path} holds, we want to ensure that for every $(\pi,\lambda', \tau')\in K$ the accelerated Rauzy-Veech matrix $A_K=A_K(\pi, \lambda, \tau')$ is positive and $\Vert A_K(T')\Vert $ is bounded. This can be guaranteed by choosing $K\subseteq U$ of the form $K= \{\pi\} \times \Delta_\gamma\times V$, where $\gamma$ is a simple loop starting from $\pi$ and $V\subseteq \Theta_{\pi}$ open, see \cite{avila_exponential_2006} and \cite{ulcigrai_absence_2011} for defnitions and details.} 
	
Let $E\subseteq K$ be the subset given by Proposition \ref{thm:derivativeestimates}. 
{
Then, since $\tilde{\mathcal R}$ is ergodic and $E$ is of positive Lebesgue measure (and hence has positive measure for the natural invariant measure for $\tilde{\mathcal R}$), for a.e. IET $(\pi,\lambda),$ there exists $\tau\in \Theta_\pi$, {which satisfies \eqref{cond:balanced height}}, such that $(\pi,\lambda,\tau)$ is infinitely recurrent to $E\subseteq K$, i.e.,} there exists an increasing sequence $(k_n)_{n\in\N}$ such that $\tilde{\mathcal R}^{k_n}(\pi,\lambda,\tau)\in E$. We claim that $(k_n)_{n\in\N}$ is the sought sequence.
	
First note that by the choice of $K$ (and \eqref{eq: matrixbound}), $(k_n)_{n \in \N}$ is a sequence of $\nu$-balanced times and $\mathcal{RV}^{k_n}(T)$ has a $\delta$-drift. Moreover, by Theorem \ref{thm:derivativeestimates}, $(k_n)_{n \in \N}$ is also a sequence of good derivative controls for $T$. {Furthermore, in view of Remark~\ref{rem:onlyforward}, since the assumption \eqref{cond:tau2} ensures that $\tau^n$ satisfies \eqref{eq:tauchoice}, we can write each midpoint $c_\beta$, with $\beta\in \A\cup \{ \tfrac{1}{2}\}$, as $T^{s^n_\alpha} (c_\alpha)$, for some $\alpha \in \A\cup \{ \tfrac{1}{2}\}$ with $s^n_\alpha$ positive as in \eqref{eq:positive}.} 
% \eqref{eq:} } all $s_{\alpha}^n$ are positive, namely , holds.

It remains to see that $T$ has well-positioned discontinuities at time $k_n$ for every $n\in\N$. This follows from Condition \eqref{cond:tau} in the definition of $K$ and \eqref{cond:balanced height}. 
%{eq: initial_tau_is_balanced}.
 Indeed, for every $\alpha\in\mathcal A\setminus\{\pi_0^{-1}(1),\pi_0^{-1}(d)\}$, by Condition \eqref{cond:tau}, the {rectangle with base} $I^{k_n}_{\alpha}$ { and height} $h^{k_n}_{\alpha}$ contains a singularity at height 
$$\sum_{j<\pi_0(\alpha)} \tau_{\pi_0^{-1}(j)}>h^{k_n}_{\alpha}/2$$ 
(see Chapter 15 in \cite{viana_ergodic_2006}). Moreover, the tower $\bigcup_{i=0}^{q^{k_n}_{\alpha}} I^{k_n}_{\alpha}$ consists of the intersections of the initial interval $I$ with the said {rectangle}. Since { the chosen $\tau$ satisfies \eqref{cond:balanced height}}, the lower half of the {rectangle} contains at most three times fewer levels of a tower over the interval $I^{k_n}_{\alpha}$ than the upper half (compare with the proof of Lemma \ref{lm: position_of_centers}), the desired condition follows.
\end{proof}

\subsection{Final arguments to show the skew-products ergodicity}\label{sec:last}
In this section, we conclude the proof of the main result of this article, namely Theorem \ref{thm:reductiontoIETs}.
{ Let us first explain the strategy. To prove ergodicity, we need to verify its assumptions of the ergodicity criterion given by Proposition \ref{prop:ergodicity_criterion} for the sets built in \S~{\ref{sec:goodtimes}}}. The sets are constructed to be partial rigidity sets, so we need to check the Tightness Assumption 1 and the Oscillation Assumption 2. 

\smallskip
To check Condition \eqref{cond:tightness} in Proposition \ref{prop:ergodicity_criterion}, i.e., to show that the Birkhoff sums on the set $\Xi_n$ that we consider are tight, we will use that, in view of the cancellations due to the {assumption that the cocycle is odd on each continuity interval}, there exist points where the Birkhoff sums are small (see Proposition~ {\ref{lem: zeroes}}) and then { we use that the good renormalization times (in view of their Definition~\ref{def:goodtimes}) have good derivative controls (see Definition~\ref{def:cancelltimes})} to deduce tightness. 
%the fact that by Lemma~\ref{lm: position_of_centers} and 

	\smallskip
To check Condition \eqref{cond:decay_coefficients} in Proposition \ref{prop:ergodicity_criterion}, in particular, to estimate the integral appearing in Condition \ref{cond:decay_coefficients} of Proposition \ref{prop:ergodicity_criterion}, we will use the following lemma:

\begin{lemma}
	\label{lem:decay_coefficients}
	Let $\psi:[0, 1] \subseteq \R \to \R$ be a $C^2$ function. Assume there exist $0 < \eta < 1$ and $V > 0$ such that 
	\[\rho = \min_{x \in [0, \eta]} |\psi'(x)| > 0, \quad \quad \textup{Var}(\psi')|^\eta_0 \leq V.\]
	Then, there exists a constant $C > 0$, depending only on $V$ and $\rho$, such that, { for any $k>1$,}
	\[ \left| \int_0^1 e^{2\pi k i \psi(x)} d\textup{Leb}(x) \right| \leq 1 - \eta + \frac{C}{k}. \]
\end{lemma}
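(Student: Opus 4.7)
The plan is a standard van der Corput / stationary phase style argument: split the integral at $\eta$, bound the piece over $[\eta,1]$ trivially by $1-\eta$, and extract a $1/k$ bound on the piece over $[0,\eta]$ by integrating by parts once, using the lower bound $|\psi'|\geq \rho$ on $[0,\eta]$ and the variation bound on $\psi'$.

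More concretely, the estimate over $[\eta,1]$ is immediate since $|e^{2\pi k i\psi}|=1$ gives
\[
\Bigl|\int_\eta^1 e^{2\pi k i \psi(x)}\,dx\Bigr| \leq 1-\eta.
\]
For the interval $[0,\eta]$ I would first observe that the assumption $\rho=\min_{[0,\eta]}|\psi'|>0$ together with continuity of $\psi'$ forces $\psi'$ to have constant sign on $[0,\eta]$, so $1/\psi'$ is well-defined and continuous there. Then write $e^{2\pi k i \psi(x)} = \tfrac{1}{2\pi k i\psi'(x)}\tfrac{d}{dx}e^{2\pi k i\psi(x)}$ and integrate by parts to get
\[
\int_0^\eta e^{2\pi k i \psi(x)}\,dx = \Bigl[\frac{e^{2\pi k i\psi(x)}}{2\pi k i\,\psi'(x)}\Bigr]_0^\eta + \frac{1}{2\pi k i}\int_0^\eta e^{2\pi k i\psi(x)}\frac{\psi''(x)}{\psi'(x)^2}\,dx.
\]
The boundary term is bounded in modulus by $\tfrac{1}{\pi k \rho}$ using $|\psi'|\geq \rho$ at both endpoints. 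For the remaining integral, the key point is that $\int_0^\eta |\psi''(x)|\,dx = \textup{Var}(\psi')|_0^\eta \leq M$ (since $\psi\in C^2$ the total variation of $\psi'$ equals the $L^1$-norm of $\psi''$), so
\[
\Bigl|\int_0^\eta e^{2\pi k i\psi(x)}\frac{\psi''(x)}{\psi'(x)^2}\,dx\Bigr| \leq \frac{1}{\rho^2}\int_0^\eta |\psi''(x)|\,dx \leq \frac{M}{\rho^2}.
\]
Combining these two bounds yields
\[
\Bigl|\int_0^\eta e^{2\pi k i \psi(x)}\,dx\Bigr| \leq \frac{1}{\pi k \rho} + \frac{M}{2\pi k \rho^2} =: \frac{C}{k},
\]
with $C=C(M,\rho)$ depending only on $M$ and $\rho$ as required, and the triangle inequality finishes the lemma.

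There is essentially no hard step: the proof is a textbook non-stationary phase bound, and the only thing to be slightly careful about is the justification that $\psi'$ has constant sign on $[0,\eta]$ (so one can divide by it) and that for $C^2$ functions the total variation of $\psi'$ equals $\int|\psi''|$. If one wanted to weaken the hypothesis to $\psi\in C^1$ with $\psi'\in BV$, the same proof goes through with the integral $\int \psi''/\psi'^2$ interpreted as a Stieltjes integral against $d\psi'$ and bounded by $\textup{Var}(\psi')/\rho^2 \leq M/\rho^2$, which is the form in which the estimate will presumably be applied to Birkhoff sums of cocycles with logarithmic singularities.
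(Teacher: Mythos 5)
Your proof is correct and follows essentially the same route as the paper: split at $\eta$, bound the $[\eta,1]$ piece trivially, and use a single integration by parts on $[0,\eta]$ with the lower bound on $|\psi'|$ and the variation bound on $\psi'$. In fact your constants $\frac{1}{\pi k\rho}+\frac{M}{2\pi k\rho^2}$ are the correct ones; the paper's displayed bound $\frac{\rho}{\pi k}+\frac{M\rho^2}{2\pi k}$ has $\rho$ and $1/\rho$ inadvertently swapped (a typo, immaterial since $C$ may depend on $\rho$ and $M$), and your remark about weakening $C^2$ to $C^1$ with $\psi'\in BV$ is a sound observation about how the lemma is actually used.
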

 Let us first conclude the strategy overview and then give the proof of the lemma above.
To apply Lemma \ref{lem:decay_coefficients} to each floor of the tower $\Xi_n$, for $n\in\N$ sufficiently large, the idea is to exploit the control given by the $\delta$-drift (in particular the good base dynamics described by {Lemmas \ref{lem:rigid_interval} and \ref{lem:towers_approaching_left_singularities}}) in order to approach the left singularity in each of the towers to get the lower bound on the derivative (that is, to beat the constant coming from the good cancellation condition) while not approaching too close to remain uniformly bounded. 
	
\smallskip
Let us now prove the Lemma and then present the details of the proof of Theorem~\ref{thm:reductiontoIETs}.

\begin{proof}[Proof of Lemma \ref{lem:decay_coefficients}]
	Let us assume WLOG that $\psi'(x) \geq \rho$ for any $0 \leq x \leq \eta.$ { From the definition of variation, using that for any interval $(x_i,x_{i+1})\subseteq [0,\eta]$
	$$
	\left| \frac{1}{\psi(x_i)}- \frac{1}{\psi (x_{i+1})} \right| \leq \frac{\left| \psi (x_{i+1})- \psi (x_i) \right|}{\min_{x_i\leq x\leq x_{i+1}}|\psi'(x)|^2},
	$$}
one can show that
	\[\left. \textup{Var}(1/\psi')\right|_0^\eta \leq \frac{\textup{Var}(\psi')}{(\min_{x \in [0, \eta]} |\psi'(x)|)^2}\]
and therefore, 	since $\left. \psi'\right|_0^\eta$ is strictly positive, for $\rho$ and $V$ as in the assumptions of the Lemma, we have
$$\left|	\int_0^\eta \frac{d}{dx}\left( \frac{1}{\psi'(x)}{\textrm{d} \mathrm{Leb}(x)} \right) \right|\leq \left. \textup{Var}(1/\psi')\right|_0^\eta \leq \frac{V}{ \rho^2}.
	$$ 
Using this estimate and integrating by parts, we have
\begin{align*} 
	\left| \int_0^\eta e^{2\pi k i \psi(x)} \textrm{d} \textup{Leb}(x) \right| & = \left| \int_0^\eta \frac{\frac{d}{dx}\left(e^{2\pi k i \psi(x)}\right)}{2\pi k i \psi'(x)} \textrm{d} \textup{Leb}(x) \right| \\
	 & = \frac{1}{2\pi k} \left| \left[ \frac{e^{2\pi k i \psi(x)}}{\psi'(x)} \right]_0^\eta 
	- \int_0^\eta e^{{2\pi k i \psi(x)}}\frac{d}{dx} 
\left( \frac{1}{\psi'(x)} \right)	 \textrm{d} { \textup{Leb}(x)} 	\right|\\
 & {\leq \frac{2}{2\pi k\, {\min_{0\leq x\leq \eta}|\psi'(x)|^2}} + \frac{1}{2\pi k} \frac{V}{\rho^2}
 \leq \frac{1}{k} \frac{1}{\pi \rho } \left( 1+ \frac{V}{2\rho} 
	\right) }\end{align*} 
	Therefore, { for $C: = \frac{1}{\pi \rho } \left( 1+ {V}/{2\rho} 
	\right)$, } we have
	\[ \left| \int_0^1 e^{2\pi k i \psi(x)} d\textup{Leb}(x) \right| \leq 1 -\eta + \left| \int_0^\eta e^{2\pi k i \psi(x)} d\textup{Leb}(x) \right| \leq 1 - \eta + \frac{C}{k}, \]
which concludes the proof. \end{proof}

\begin{proof}[Proof of Theorem \ref{thm:reductiontoIETs}]
{
 Let us consider the full measure set of IETs given by Proposition \ref{prop:full_measure_cond} and assume that $T$ belongs to this set. 
%Given $\eta>1 $ and $0<\delta<1/2d$, let $T$, 
Let $f$ be as in Theorem \ref{thm:reductiontoIETs}. Since $f$ has non trivial logarithmic singularities (see Definition~\ref{def:logsing}), there exists $\alpha \in \mathcal{A}$ and $1\leq k\leq d$ such that $\alpha= \pi_0^{-1}(k)$ and either}
\begin{align}\label{case: first_plus_last_tower}
& 1 < k < d \quad \text{ and } \quad C_{\pi_0^{-1}(k)}^+ \neq 0, \qquad \text{or} \\
& k = d \quad \text{ and } \quad C_{\pi_0^{-1}(1)}^+ + C_{\pi_0^{-1}(d)}^+ \neq 0. \label{case: middle tower}
\end{align}
\noindent {
Let $(m_n)_{n\in\N}$ be the sequence of $(\nu, \delta, M)$-good times given by Proposition \ref{prop:full_measure_cond}, for which also the second part of Lemma~\ref{lm: position_of_centers} (in particular \eqref{eq:positive}) holds.} 
%and let $M$ be the good derivative controls constant. 
Let $D=D(\nu, \delta, M)$ be the constant given by Proposition~\ref{lem:BScontrol}. 
{Choosing $\eta >0$ sufficiently small, we can find $\rho>0$ such that
\begin{equation}\label{def:rho} 0<\rho<
\begin{cases}
 \frac{d\nu|C^+_{\alpha}|}{2\eta}-D
 & \text{if}\ \eqref{case: first_plus_last_tower}\ \text{holds,\ or}
 \\ 
\frac{\left|C_{\pi_0^{-1}(1)}^+ + {C_{\pi_0^{-1}(d)}^+}\right|}{2\eta}-D& 
 \text{if \ \eqref{case: middle tower} \ holds.}
 \end{cases}
\end{equation}
Furthermore, choosing a smaller $\eta>0$ if necessary, we can also assume that 
$\eta<\delta$.}%\footnote{
%{ Where was this def used? I think never... I think it should be 
%removed. Define 
%\begin{equation*} \eta:=
%\begin{cases}
%\frac{1}{\big|C_{\pi_0^{-1}(1)}^+ + 
%{C_{\pi_0^{-1}(d)}^+}\big|}\min\left\{\frac{1}{1000},\frac{1}{3M}\right\}
% & \text{if}\ \eqref{case: first_plus_last_tower}\ \text{holds,\ or}
% \\ \frac{1}{\big|C_{\pi_0^{-1}(k)}^+\big|} \min\left 
%\{\frac{1}{1000},\frac{1}{3M} \right\}& 
% \text{if \ \eqref{case: middle tower} \ holds.}
% \end{cases}
%\end{equation*}}}

For any $n \in \N$, fix $0<\eta<1/4$, { so that $\rho$ as in \eqref{def:rho} can be chosen}, and define $\Xi_n(\eta)$ and $h^n = q_\alpha^n + q_{\overline\alpha}^n$ as in \eqref{eq:heights_Xi_n}. As shown at the end of \S~\ref{sec:goodtimes}, from the definition of $\Xi_n$ and $h_n$ in \S~\ref{sec:goodtimes} and Lemma \ref{lem:towers_approaching_left_singularities}, it follows that $\{\Xi_n\}_{n\in\N}$ is a partial rigidity sequence {with partial rigidity times $\{ 2h_n\}_{n\in \N}$,} in the sense of Definition \ref{def:partial_rigidity}. We now proceed to verify the two conditions of the ergodicity criterion.

{
\medskip
\noindent {\it Tightness of Birkhoff sums.} To show tightness, we will compare the Birkhoff sums $S_{2h_n}\varphi(x)$ of any $x\in \Xi_n$ with the Birkhoff sums $S_{2h_n}\varphi(x_\alpha^n)$, { where the point $x^n_\alpha$ belongs to the tower over $I^{m_n}_\alpha$ given by Proposition~\ref{lem: zeroes}. Recall that, in particular, this explicit Birkhoff sum is either equal to 0 or it is very close to 0.}

To study the difference $|S_{2h_n}\varphi(x) - S_{2h_n}\varphi(x_\alpha^n)|$, we can split both the orbit segments of $x$ and $x_\alpha^n$ of length $2h_n$ into two orbit segments and pair them so that we can apply the mean value theorem to the differences of the corresponding Birkhoff sums as follows. 

Assume, for example, $k:=j-i\geq 0$ (the other case is similar). Since $x=T^i(x_0)$, we have that $T^k(x)=T^{j-i}(T^i(x_0))=T^j(x_0)$, so $T^{k}(x)$ and $x^n_\alpha=T^j(y)$, { where $y\in I_{\alpha}^{m_n}$}, belong to the same floor of the Rohlin tower over $F^n_\alpha$. Furthermore, by the good dynamics, if $J_1$ denotes the interval between them, $T^\ell $ restricted to $J_1$ is as an isometry (and in particular continuous) for $0\leq \ell < h_n-k$ (by the good dynamics given by Lemma~\ref{lem:towers_approaching_left_singularities}, see in particular \ref{cond:nested_images}). Hence, we can apply the mean value theorem so that, for some $z_1\in J_1$, 
\begin{equation}\label{eq:meanvalue1}
\left| S_{h_n-{k}}\varphi (T^k (x)) - S_{h_n-{k}}\varphi (x^n_\alpha)\right| \leq \left| S_{h_n-{k}}\varphi' (z_1) \right | |J_1|. \end{equation}
Similarly, we have that $T^{2h_n- k}(x^n_\alpha)$ and $x=T^i(x_0)$ belong to the same floor of the tower over $I^{m_n}_\alpha$ (since we can write $T^{2h_n- k}(x^n_\alpha) = T^{i-j} T^{2h_n} (T^j (y)) = T^i ( T^{2h_n} (y))$ where $y\in F^n_\alpha$ and, by the good dynamics, $T^{2h_n} (y)$ is again in $I^{m_n}_\alpha$). Thus, if $J_2$ denotes the interval between them, $T^\ell$ restricted to $J_2$ is an isometry for $0\leq \ell< k$. Thus, as above, by the mean value theorem, there exists $z_2\in J_2$ such that
\begin{equation}\label{eq:meanvalue2}
\left| S_{k} \varphi (x) - S_{{k}}\varphi (T^{2h_n- k} x^n_\alpha)\right| \leq 
\left| S_{{k}} \varphi' (z_2) \right | |J_2|.
\end{equation}
Let us now estimate the RHS of \eqref{eq:meanvalue1} and \eqref{eq:meanvalue2}. 
By the $M$-good derivative control (see Definition~\ref{def:cancelltimes}) and the control on the closest visit given by Conditions \ref{cond:3} or \ref{cond:4} of Lemma~\ref{lem:towers_approaching_left_singularities}, recalling that {we chose $\eta<\delta$}, we can estimate 
\begin{equation}\label{eq:der1}
\left| S_{{k}} \varphi' (z_2) \right | \leq M k + \frac{(2d-1)C_{max}}{\delta|I^{m_n}| } + \frac{C^+_{\gamma(\alpha)}}{\eta |I^{m_n}_\alpha|}\leq M q^{m_n}_\alpha + \frac{2d
C_{max}}{\eta|I^{m_n}|}.
%+ D_1(\eta) q^{n}_\alpha, \quad \text{where} \ D_1(\eta):={2d C_{max} d \nu} .
\end{equation}
Similarly, writing $z_1 =T^k(z_0)$ for $z_0\in F^n_\alpha$ and seeing the RHS of \eqref{eq:meanvalue1} as a difference, using Proposition~\ref{lem:BScontrol} and an estimate similar to the above, we get
\begin{equation}\label{eq:der2}
 \left| S_{2h_n-k} \varphi' (z_1)\right| = \left| S_{2h_n} \varphi' (z_0) - 
 S_{k} \varphi' (z_0)\right| \leq ( D+M) q_{\alpha}^{m_n} + \frac{(2d+1) 
 C_{max}}{\eta|I^{m_n}| } .%+ \frac{C_{\gamma(\alpha)}^+}{\eta 
 %|I^{m_n}_\alpha|}.
%2 q_\alpha^{m_n} + 2 \left(\sum_{\beta \in \A}\frac{C_\beta^+}{|z_{\beta}^+|}+\frac{C_\alpha^-}{|z_{\beta}^-|}\right),
\end{equation}
Combining all these estimates (i.e., adding up \eqref{eq:meanvalue1} and \eqref{eq:meanvalue2} and applying \eqref{eq:der1} and \eqref{eq:der2}), using balance (which gives $1/|I^{m_n}|\leq d\nu q^{m_n}_\alpha$) and recalling that $|J_i|\leq |I^{m_n}_\alpha|$, for $i=1,2$, we get% and $q_\alpha^{m_n} |I_{m_n}^\alpha|$
\begin{align*}
\big| S_{2h_n}\varphi(x) - S_{2h_n}\varphi(x_\alpha^n)\big| & \left| \left(S_{k}\varphi(x) + S_{2h_n-k}\varphi(T^k x) \right)- \left( S_{2h_n-k}\varphi(x_\alpha^n)+ S_{k}\varphi(T^{2h_n-k }x_\alpha^n) 
\right)\right|
\\ & \leq \left| S_{k} \varphi (x) - S_{{k}}\varphi (T^{2h_n- k} x^n_\alpha)\right| +
\left| S_{2h_n-{k}}\varphi (T^k x) - S_{2h_n-{k}}\varphi (x^n_\alpha)\right| \\
& \leq \overline{D} q^{m_n}_\alpha %\max_{i=1,2} |J_i|
|I^{m_n}_\alpha|
\leq \overline{D}, \quad \text{where} \ \overline{D}:=D+2M+ \frac{\nu (2d^2+d) 
C_{max}}{\eta},
%\leq (D+ C_{max}\nu) q_{\alpha}^{m_n} |F^{n}_\alpha| \\
%& \left( M q^{m_n}_{\alpha} + \frac{C^{max}{2d-1}}{\delta |I^{m_n}|} + \
 % \frac{C^+_{\gamma(\alpha)}}{\eta \nu}\right) |F^{n}_\alpha|.
%\leq K_1 
\end{align*}
for any $n\in\N$ and $x\in \Xi_n$. }This, together with \eqref{eq" known_bounded_sum}, shows that { 
the sets $\{\Xi_n\}_{n\in \N}$ satisfy} Condition \ref{cond:tightness} in 
Proposition \ref{prop:ergodicity_criterion}.

\medskip
{
\noindent {\it Oscillations of Birkhoff sums.} We now proceed to show that Condition \ref{cond:decay_coefficients} in Proposition \ref{prop:ergodicity_criterion} is also satisfied. 
The proof differs slightly depending on whether the assumption \eqref{case: first_plus_last_tower} on the constants or the assumption \eqref{case: middle tower} is satisfied. However, the only real difference comes from the fact that in the second case, namely \eqref{case: first_plus_last_tower} (when only the sum of two constants is known to be non-zero), we approach two discontinuities of $f$ at once (since there is one in 0 and one in the pre-image of 0 via $T$), while in the case \eqref{case: middle tower} we approach only one singularity. %, which corresponds to $\alpha$ de\varphi ined at the beginning o\varphi this section. 
}

	We will give the argument only in the case \eqref{case: middle tower}, leaving the details of the other case to the reader. In this case, 
let $\widetilde{F}^{n}_{\alpha}$ be the left-hand side initial interval of $F^{n}_{\alpha}$ (namely, an interval with the same left endpoint) of length $\eta\cdot|F^{n}_{\alpha}|\le \eta/{q_{\alpha}^{{m_n}}}$, and consider {the subtower of $\Xi_n$ above it (namely, a Rohlin tower of height $1/4$th of the whole tower), i.e.,} 
$$\widetilde{\Xi}_n:=\bigcup_{i=0}^{\lceil q^{{m_n}}_{\alpha}/4\rceil}T^i\big(\widetilde{F}^n_{\alpha}\big)\subseteq \Xi_n.$$
Note that each of the intervals $T^i\big(\widetilde{F}^n_{\alpha}\big)$ is again { the left subiterval of $T^i\big(F^n_{\alpha}\big)$. Recalling that the left endpoint of $F^{n}_{\alpha}$ (and hence $\widetilde{F}^{n}_{\alpha}$) is $l^{m_n}_\alpha + \eta |I^{m_n}_\alpha|$ and $\widetilde{F}^{n}_{\alpha}$ has length $\eta| I^{m_b}_\alpha|$, for any $x\in\tilde \Xi_n$, the closest visit to a singularity (given by~\eqref{eq:closest}), as discussed in \S~\ref{sec:estimates}) is not closer than $2\eta|I^{m_n}_\alpha|$. Thus, Proposition~\ref{lem:BScontrol} and balance (which guarantees that $|I^{m_n}| q_\alpha^{m_n} \geq \min_{\beta\in \A} \left(|I^{m_n}_\beta| q_\beta^{m_n} \right) \geq 1/d\nu$) give
\begin{equation}\label{eq: lower bound on derivative}
|S_{2h_n}{\varphi'}(x)|\ge 
\frac{|C^+_{\gamma(\alpha)}|}{|x_0-l^{m_n}_\alpha|} -D q_{\alpha}^{m_n} \geq
\frac{|C^+_{\gamma(\alpha)}|}{2\eta |I^{m_n}|}
%\,-\, (4d-1) \max_{\beta\in\A} \frac{|C^ +_{\beta}|}{\delta |I^n|}\,- 4d \,\max_{\beta\in\A}\frac{|C^ -_{\beta}|}{\delta |I^n|}
-D\, q_{\alpha}^{m_n} > \left(\frac{d\nu|C^+_{\gamma(\alpha)}|}{2\eta}-D\right) q_{\alpha}^{m_n} \geq \rho \, q_{\alpha}^{m_n} , 
\end{equation}
where the last inequality follows by choice of $\eta$ and definition of $\rho$, see \eqref{def:rho}.} 
%Thus, recalling that by $\nu$-balance of the length vectors% ($|I^n_\alpha|\geq |I^n|/\nu d$ and %$I^n q_\alpha \leq I^n \min_{\beta\in \A}q_\alpha I^n\geq 1/d$}
%{ Since by Lemma \ref{lem:towers_approaching_left_singularities} we can choose any $0<\eta<1/4$ in the definition of the sets $F^n_\alpha$, by decreasing $\eta$ if necessary, there exists $\rho>0$ such that, for any $n\in \N$,} 
%\begin{equation}
%|S_{2h_n}{\varphi'}(x)|\ge\frac{|C^+_{\alpha}|q^{m_n}_{\alpha}}{2\eta }-(2d-1)\max\{|C^+_{\alpha} |,|C^-_{\alpha}|;\,{\alpha\in\mathcal A}\}\frac{q^{m_n}_{\alpha}}{\delta }-2Mh_n>\rho>0,
%\end{equation}
On the other hand, Proposition~\ref{lem:BScontrol}, Lemma \ref{lem:towers_approaching_left_singularities} and balance give
\begin{equation}\label{eq: upper bound on derivative}
|S_{2h_n}{\varphi'}(x)|
\le Dq^{m_n}_{\alpha} + \frac{|C^+_{\alpha}|q^{m_n}_{\alpha}}{|x_0-l^{m_n}_\alpha| } 
\le Dq^{m_n}_{\alpha} + \frac{|C^+_{\alpha}|}{\eta |I^{m_n}_{\alpha}|} \leq 
\left( D+ \frac{d\nu |C^+_{\alpha}|}{\eta}\right) q^{m_n}_{\alpha}.
%+(2d-1)\max\{|C^+_{\alpha} |,|C^-_{\alpha}|;\,{\alpha\in\mathcal A}\}\frac{q^{m_n}_{\alpha}}{\delta }+2Mh_n\le Dq^{m_n}_{\alpha},
\end{equation}
%where $D$ is a constant depending only on $\eta$, the function $f$, and the balance constant $\nu$ (recall Definition~\ref{def:balancedtime}). 
%{The inequalities \eqref{eq: lower bound on derivative} and 
%\eqref{eq: upper bound on derivative} imply that the function 
%$S_{2h_n}{\varphi'}$ satisfies the assumptions of Lemma 
%\ref{lem:decay_coefficients} WHY?}%%%
% on each interval $T^i\tilde{F^n_{\alpha}}$, for $i=0,\ldots,\lceil 
% q^{m_n}_{\alpha}/4 \rceil$, {up to rescaling EXPLAIN}. 
 {The inequalities \eqref{eq: lower bound on derivative} and \eqref{eq: upper bound on derivative} imply that for every $n\in\N$ and every $i\in\{0,\lceil q_\alpha^{m_n}/4 \rceil \}$, the function $\psi:[0,1]\to[0,1]$, $\psi(x):=S_{2h_n}{\varphi}(|F_\alpha^n|x + T^i l_{\alpha}^{m_n})$, satisfies the assumptions of Lemma \ref{lem:decay_coefficients}. Indeed, by \eqref{eq: lower bound on derivative}, for every $x\in [0,\eta]$, 
 \[
\psi'(x)=\frac{d}{dx}S_{2h_n}{\varphi}(|F_\alpha^n|x + T^i 
l_{\alpha}^{m_n})\ge |F_{\alpha}^n|\rho q_{\alpha}^{m_n}\ge \rho/ d\nu,
\]
{ where the last inequality follows from the balance.}
Moreover, by \eqref{eq: upper bound on derivative}, we get
\[
Var(\psi'|_{[0,\eta]})=\int_0^\eta |\psi'(x)|\,dx\le 
|F_{\alpha}^n|\left(D\eta+d\nu|C_{\alpha}^+|\right)q_{\alpha}^{m_n}\le 
\left(D\eta+d\nu|C_{\alpha}^+|\right).
\]}
Thus, by Lemma \ref{lem:decay_coefficients}, there exists a constant $C>0$, which does not depend on $n$, such that, for every $0\le i\le {q_{\alpha}^n }$ and $k{>} 0$,

\[
\left|\int_{T^iF^n_{\alpha}}e^{2\pi ki\varphi(x)}\,d\textup{Leb}(x)\right|\le 
Leb(F^n_{\alpha})\left(1-\eta+\frac{C}{k}\right).
\]
Hence, for $k$ sufficiently large,
\[
\left|\int_{\Xi_n}e^{2\pi ki\varphi(x)}\,d\textup{Leb}(x)\right|< 
\textup{Leb}(\Xi_n),
\]
which shows that the sets $\{\Xi_n\}_{n \in \N}$ satisfy also Condition~\ref{cond:decay_coefficients} in Proposition \ref{prop:ergodicity_criterion}.
\smallskip

{
Thus, having verified all assumptions of Proposition \ref{prop:ergodicity_criterion}, we can apply it to conclude that the skew-product $T_\varphi $ under consideration is ergodic.}
\end{proof}

\subsubsection*{Acknowledgements} We thank Krzysztof Fr{\k a}czek for valuable discussions. The authors acknowledge the support of the {\it Swiss National Science Foundation} through Grant $200021\_188617/1$. The first author also thanks the {\it National Science Centre (Poland)} grant OPUS 2022/45/B/ST1/00179.

\bibliographystyle{acm}
\bibliography{IETinfinite.bib, Bibliography.bib, IET.bib, ExtraBib.bib}

\vspace{5mm}

P.~Berk, {Faculty of Mathematics and Computer Science, Nicolaus
	Copernicus University, ul. Chopina 12/18, 87-100 Toru\'n, Poland}\\
{zimowy@mat.umk.pl}
\vspace{3mm}

F.~Trujillo, {Institut für Mathematik, Universität Zürich, Winterthurerstrasse 
190, CH-8057 Zürich, Switzerland}\\
{frank.trujillo@math.uzh.ch }

\vspace{3mm}
C.~Ulcigrai, {Institut für Mathematik, Universität Zürich, Winterthurerstrasse 
190, CH-8057 Zürich, Switzerland}\\
{corinna.ulcigrai@math.uzh.ch}
\end{document}